%
%
%
%
%

%
\RequirePackage{fix-cm}
\documentclass[smallextended]{svjour3}       
\smartqed  
\usepackage{graphicx}
\usepackage{amsmath}
\usepackage{amsfonts}
\usepackage{cite}
\usepackage{mathtools}
\usepackage[usenames, dvipsnames]{color}

\usepackage{amsthm, upref}

\newcommand{\eq}{\begin{equation}}
\newcommand{\en}{\end{equation}}
\newcommand{\rr}{\mathbb{R}}
\newcommand{\NN}{\mathbb{N}}

\newcommand{\norm}[1]{\left\lVert #1 \right\rVert}

\newcommand{\diri}{\text{Dirichlet}}

\newcommand{\cost}{\mathbf{C}}

\newcommand{\Ent}{\mathrm{Ent}}

\newcommand{\leb}{\mathrm{Leb}}

\newcommand{\simp}{\Delta}
\newcommand{\inc}{\mathcal{I}}
\newcommand{\perm}{\mathcal{S}}

\theoremstyle{plain}
\newtheorem{thm}{Theorem}
\newtheorem{prop}[thm]{Proposition}

\theoremstyle{definition}
\newtheorem{defn}{Definition}
\newtheorem{asmp}{Assumption}
\newtheorem{notn}{Notation}

\theoremstyle{remark}
\newtheorem{rmk}{Remark}
\newtheorem{exm}{Example}

%
%
%
%
%
\begin{document}

\title{Multiplicative Schr\"{o}dinger problem and the Dirichlet transport\thanks{S.~Pal's research is supported by NSF grant DMS-1612483. T.-K.~L.~Wong's research is supported by NSERC grant RGPIN-2019-04419.}
}

\titlerunning{Dirichlet transport}        

\author{Soumik Pal         \and
        Ting-Kam Leonard Wong 
}


\institute{Soumik Pal \at
              Department of Mathematics, University of Washington\\
              \email{soumikpal@gmail.com}           
           \and
           Ting-Kam Leonard Wong \at
           Department of Statistical Sciences, University of Toronto\\
           \email{tkl.wong@utoronto.ca}
}

\date{Received: date / Accepted: date}

\maketitle

\begin{abstract}
We consider an optimal transport problem on the unit simplex whose solutions are given by gradients of exponentially concave functions and prove two main results. First, we show that the optimal transport is the large deviation limit of a particle system of Dirichlet processes transporting one probability measure on the unit simplex to another by coordinatewise multiplication and normalizing. The structure of our Lagrangian and the appearance of the Dirichlet process relate our problem closely to the entropic measure on the Wasserstein space as defined by von-Renesse and Sturm in the context of Wasserstein diffusion. The limiting procedure is a triangular limit where we allow simultaneously the number of particles to grow to infinity while the `noise' tends to zero. The method, which generalizes easily to many other cost functions, including the squared Euclidean distance, provides a novel combination of the Schr\"odinger problem approach due to C.~L\'eonard and the related Brownian particle systems by Adams et al.~which does not require gamma convergence. Second, we analyze the behavior of entropy along the paths of transport. The reference measure on the simplex is taken to be the Dirichlet measure with all zero parameters which relates to the finite-dimensional distributions of the entropic measure. The interpolating curves are not the usual McCann lines. Nevertheless we show that entropy plus a multiple of the transport cost remains convex, which is reminiscent of the semiconvexity of entropy along lines of McCann interpolations in negative curvature spaces. We also obtain, under suitable conditions, dimension-free bounds of the optimal transport cost in terms of entropy.

\keywords{Optimal transport \and Exponentially concave function \and Displacment interpolation \and Schr\"{o}dinger problem \and entropic measure \and $L$-divergence \and large deviations \and Dirichlet process}
\subclass{60J75  \and 60G57 \and 60F10}

\end{abstract}

\section{Introduction} \label{sec:intro} Throughout this paper let $\Delta_n$ be the open unit simplex in $\mathbb{R}^n$ defined by
\begin{equation} \label{eqn:simplex}
\Delta_n = \{p = (p_1, \ldots, p_n) \in (0, 1)^n : p_1 + \cdots + p_n = 1\}, \quad n \geq 2.
\end{equation}
In a series of papers \cite{PW14, W15, PW16, P16, W17} we introduced and studied a Monge-Kantorovich optimal transport problem on the unit simplex with the cost function
\begin{equation} \label{eqn:cost.function}
c(p, q) := \log \left(\frac{1}{n} \sum_{i = 1}^n \frac{q_i}{p_i}\right) - \frac{1}{n}\sum_{i = 1}^n  \log \frac{q_i}{p_i}, \quad p, q \in \Delta_n.
\end{equation}
Whereas the quadratic transport on Euclidean space is solved by the gradient map of a convex function (see e.g.~\cite{V03, V08}), our transport problem can be solved in terms of {\it exponentially concave functions}, i.e., functions $\varphi$ such that $e^{\varphi}$ are concave. Exponentially concave functions have been applied to several recent results related to optimal transport. For example, in \cite{EKS15} it was used to prove the equivalence of the entropic curvature-dimension condition and Bochner's inequality.

Given two Borel probability measures $P$ and $Q$ on $\Delta_n$, there exists, under suitable conditions, an exponentially concave function $\varphi$ on $\Delta_n$ whose gradient generates the Monge solution transporting $P$ to $Q$. The details are given in Section \ref{sec:transport.problem}.
Very roughly, given $p$ in the support of $P$, the image $q = T(p)$ under the Monge solution is given as follows. Let $r$ be the unique element in $\Delta_n$ such that $r_i \propto 1/p_i$ for all $i\in [n]:=\{ 1,2,\ldots, n \}$. Also, let $\pi$ denote the unique element in $\Delta_n$ such that $\pi_i \propto q_i / p_i$ for each $i \in [n]$. Then 
\[
\pi_i = r_i (1 + \nabla_{e_i - r} \varphi(r)), \quad \forall\; i\in [n],
\] 
where $e_1, \ldots, e_n$ are the vertices of $\Delta_n$ and $\nabla_{e_i - r}$ is the directional derivative. The map $r\in \Delta_n \rightarrow \pi =: \boldsymbol{\pi}(r)\in \Delta_n$ is called the \textit{portfolio map} generated by $\varphi$ due to its first appearance in stochastic portfolio theory \cite{F02, PW14}. The identity transport $T(p) \equiv p$ corresponds to the exponentially concave function $\varphi_0(r) := \frac{1}{n} \sum_{i=1}^n \log r_i$ and the induced portfolio map $\boldsymbol{\pi}(r)\equiv (1/n, 1/n, \ldots, 1/n)$ is called the \textit{equal-weighted portfolio}. 

It is helpful to think of this transport problem as a multiplicative analogue of the well studied transport problem on $\rr^n$ with cost $c(x,y)=\norm{x-y}^2$. The map $x \mapsto -x$ is a group operation on $\rr^n$. In our case it is the map $p \mapsto r := p^{-1}$.  The difference $y-x$, between $x\in \rr^n$ and its optimal Monge image $y$, is replaced by the portfolio $\pi$. This multiplicative theme permeates all our arguments. However, the transport cost is no longer the squared Euclidean norm, but the relative entropy (see Lemma \ref{lem:cost.as.entropy}). Hence, the transport cost is asymmetric and not a metric between probability measures.

One can think of the Wasserstein transport as being performed by adding (conditioned) Gaussian increments with vanishingly small noise. This is essentially the Schr\"odinger problem approach to optimal transport due to L\'eonard \cite{L12, leonardsurvey}. See also the related stochastic analysis of Schr\"odinger bridges in \cite{conforti2018}. In Section \ref{sec:MSP} we study the analogue for our transport problem. We show in Theorem \ref{thm:static.transport} that our transport corresponds to multiplying by gamma random variables with mean going to infinity (and scale one) and suitably normalizing. Let us give an informal description of the statement of this result since we deviate from the usual gamma convergence. Let $p = (p_1, \ldots, p_n) \in \Delta_n$ be given, and let $G=(G_1, \ldots, G_n)$ be a vector of i.i.d. gamma random variables with mean $\lambda/n >0$ and scale one. Define the $\Delta_n$-valued random vector $Q = (Q_1, \ldots, Q_n)$ where
\begin{equation} \label{eqn:p.dot.G0}
Q_i = \frac{p_i G_i}{\sum_{j = 1}^n p_j G_j}, \quad \forall \; i \in [n].
\end{equation}
Alternatively, we can replace $G$ by $D = (D_1, \ldots, D_n)$ where $D_i = \frac{G_i}{\sum_{j = 1}^n G_j}$. Thus $D$ has the Dirichlet distribution with parameters $(\lambda/n, \ldots, \lambda/n)$. {Note that as $\lambda \rightarrow \infty$ the Dirichlet random vector $D$ concentrates at $(1/n, \ldots, 1/n)$, i.e., the multiplicative noise in \eqref{eqn:p.dot.G0} tends to zero.}

Fix $P_0$ and $P_1$, two absolutely continuous probability distributions on $\Delta_n$. Sample two independent i.i.d. sequence $\{ p(j),\; j \in \NN\}$ from $P_0$ and $\{ q(j),\; j \in \NN \}$ from $P_1$. Consider a positive sequence $\{\lambda_N,\; N \in \NN\}$. For every $N$, and for each $j\in [N]$, generate an independent vector of gamma random variables $G(j)$ with mean $\lambda_N/n$. Multiply $G(j)$ with $p(j)$ as in \eqref{eqn:p.dot.G0} and construct a sequence $Q(j)=\left( Q_1(j), \ldots, Q_n(j)\right)\in \Delta_n$. Note that both $G$ and $Q$ depend on $N$. Now \textit{condition} on the event that the following two empirical distributions coincide:
\[
\frac{1}{N} \sum_{j=1}^N \delta_{Q(j)} = \frac{1}{N} \sum_{j=1}^N \delta_{q(j)}. 
\]
Of course, this is a zero probability event, but it is not hard to {make this intuition precise (see Remark \ref{rmk:discrete.Schrodinger})}. By matching the atoms this leads to an explicit coupling between the two empirical distributions 
\[
L_N(0) := \frac{1}{N} \sum_{j = 1}^N \delta_{p(j)}, \quad L_N(1) := \frac{1}{N} \sum_{j = 1}^N \delta_{q(j)}.
\]
In Theorem \ref{thm:static.transport} we prove the following. Under suitable regularity conditions, if we choose $\lambda_N$ to be of order $N^{2/n}$, then, as $N \rightarrow \infty$, this explicit coupling converges to the optimal Monge coupling between $P_0$ and $P_1$ at a rate $O\left(N^{-1/2n}\sqrt{\log N}\right)$ in the Wasserstein-$2$ metric. The main idea is that the conditional coupling solves the discrete Schr\"odinger problem and can be directly analyzed instead of first taking $N\rightarrow \infty$ and using Sanov's Theorem as done in \cite{ADPZ11, DLR13, EMR15}. This method is robust and extends to other cost functions (such as the quadratic cost) whenever suitable random variables (e.g., Gaussians) can be identified.

In \cite{PW16} we also defined a displacement interpolation that corresponds to linear interpolation between the generating functions $\varphi$ and $\varphi_0$, or, equivalently, between $\boldsymbol{\pi}$ and the equal-weighted portfolio. We showed that each individual particle travels along a straight line in the unit simplex, but the speed is non-uniform and depends on the position. Hence, the displacement interpolation is not McCann's interpolation \cite{M97} where each particle travels at constant velocity. In Theorem \ref{thm:dynconv} we show that our displacement interpolation corresponds to large deviations of the Dirichlet process whose \textit{marginal distribution} is the Dirichlet distribution that is used in the static transport described above. This is analogous to the Wasserstein-$2$ picture where the static Gaussian distribution extends to the dynamic Brownian motion. 

More interestingly, the Lagrangian action corresponding to this dynamics has a natural infinite-dimensional limit. Think of the unit simplex as the set of probability measures with $n$ atoms. This can be seen as a projection (made rigorous in Section \ref{sec:dynamic}) from the set of all Borel probability measures on $[0,1]$. For any such probability measure $\mu$, consider the relative entropy $H(\mathrm{Leb} \mid \mu)$ of the Lebesgue measure (or, uniform distribution) on $[0,1]$ with respect to $\mu$. Our Lagrangian on $\Delta_n$ is this relative entropy functional \textit{passed through the projection} (see Lemma \ref{lem:entropy.restriction} and Definition \ref{def:Lagrangian}).  Another way to express this relative entropy is to consider the distribution function $F$ of $\mu$. Then $H(\mathrm{Leb} \mid \mu)$ is the entropy of the pushforward of $\mathrm{Leb}$ by $F$, an observation taken from the work \cite{vRS09} by von Renesse and Sturm on the entropic measure and Wasserstein diffusion. In particular, our relative entropy Lagrangian appears as the Hamiltonian of the entropic measure $\mathbb{P}^\beta$ in \cite[eqn. (1.1)]{vRS09}. This is a connection that we do not fully understand although the Dirichlet processes are also critical in their construction. 

Next we establish in Section \ref{sec:entropy} the semiconvexity of entropy along the displacement interpolation paths given above. The \textit{reference measure} on the unit simplex with respect to which (relative) entropy is calculated is the Dirichlet distribution with all zero parameters. This is a $\sigma$-finite measure on $\Delta_n$ that is related to the finite-dimensional distributions of the entropic measure (see \cite[Lemma 3.1]{vRS09}). In Theorem \ref{thm:entropy} we prove that if $P_t$, $ t \in [0,1]$, is the displacement interpolation transporting absolutely continuous probability measures $P_0$ to $P_1$ on $\Delta_n$, then the (relative) entropy of $P_t$ (with respect to the reference measure) plus $n$ times the cost of transporting $P_0$ to $P_t$ is convex in $t$. This is highly reminiscent of the semiconvexity of entropy along interpolating lines of Wasserstein-$2$ transport in negative curvature spaces as established in \cite{CMS06}, {and we refer the reader to Remark \ref{rmk:convexity.discussion} for more discussion.} It might also be related to the constant sectional curvature $-1$ of the unit simplex under the dualistic geometry (in the sense of information geometry, see \cite{A16}) induced by an exponentially concave function (see \cite[Cor 4.10]{PW16} and \cite{W17}). Part of the argument involves a new Monge-Amp\`ere equation (Theorem \ref{thm:Monge.Ampere}) which might be of independent interest. Finally, in Section \ref{sec:bounds} we prove a Talagrand-type dimension-free bound on the transportation cost  (Theorem \ref{thm:second.bound}) whose intuition relies on the infinite-dimensional extension of the Lagrangian that is described above.

The motivation for these results stems from our belief that it is possible to develop information geometry \cite{A16} on spaces of probability distributions given by a cost of transport that is not a metric but a divergence in a suitable sense. In particular, Otto calculus  \cite{JKO98, O01} and the theory of gradient flows \cite{AGS08} might have extensions in this non-Riemannian information geometry. Also see \cite{WY19} for a recently discovered differential geometric connection between information geometry and optimal transport.  More broadly, we aim to extend results that go beyond the classical Wasserstein geometry and explore their mathematical implications and potential applications to, for example, statistics and data science. The generator of the gamma subordinator is a non-local operator. So, it is unlikely that the usual Otto calculus extends directly to this context. However, a probabilistic version of gradient flow for this cost function has been proved in \cite[Theorem 2]{Pal19}.

\section{The transport problem} \label{sec:transport.problem}
In this section we gather and prove basic results about our transport problem that are needed in this paper. For more details and motivations the reader may refer to \cite{PW14, PW16}. The proofs for this section are deferred till the Appendix.

\subsection{The cost function}
Let $n \geq 2$ be an integer and consider the open simplex $\Delta_n$ defined in \eqref{eqn:simplex}. Its closure in $\mathbb{R}^n$ is denoted by $\overline{\Delta}_n$.  Any vector in $(0, \infty)^n$ can be normalized to give an element of $\Delta_n$. This leads naturally to the (commutative) group operation
\begin{equation} \label{eqn:simplex.group.operation}
p \odot q := \left( \frac{p_iq_i}{\sum_{j = 1}^n p_jq_j} \right)_{1 \leq i \leq n}, \quad p, q \in \Delta_n.
\end{equation}
The identity element is the barycenter $\overline{e} := \left( \frac{1}{n}, \ldots, \frac{1}{n}\right)$, and the inverse of $p \in \Delta_n$ is given by
\begin{equation} \label{eqn:simplex.inverse}
p^{-1} := \left( \frac{1/p_i}{\sum_{j = 1}^n 1/p_j}\right)_{1 \leq i \leq n}. 
\end{equation}
In fact, $\Delta_n$ is a vector space with the following definition of scalar multiplication: for $\lambda \in \rr$ and $p \in \Delta_n$, let $\lambda \otimes p \in \Delta_n$ be such that 
\[
(\lambda \otimes p)_i = \frac{p_i^\lambda}{\sum_{j=1}^n p_j^\lambda}, \quad i\in [n].
\] 
We endow $\Delta_n$ with the topology which is consistent with the vector space structure. In compositional data analysis \eqref{eqn:simplex.group.operation} and \eqref{eqn:simplex.inverse} are called the perturbation and powering operations respectively, and are used to define the Aitchison geometry on the simplex \cite{EP06}.

\medskip

We also introduce a $\sigma$-finite reference measure on the unit simplex, namely the Dirichlet distribution with all parameters equal to zero. For a given $n$ it is closely related to the finite-dimensional marginals of the {\it entropic measure} constructed in \cite[Section 3.1]{vRS09}. It will be used in Section \ref{sec:entropy} to study the behavior of entropy.

\begin{defn} [Reference measure] \label{def:entropic.measure}
	We let $\mu_0$ be the $\sigma$-finite measure on $\Delta_n$ defined using the parameterization \eqref{eqn:simplex.parameterization} below by
	\begin{equation} \label{eqn:entropic.measure}
	d\mu_0(p) = \frac{1}{p_1 p_2 \cdots p_{n-1} p_n} d p_1 dp_2 \cdots dp_{n-1}, \quad p \in \mathcal{D}_{n-1},
	\end{equation}
	where $p_n = 1 - p_1 - \cdots - p_{n-1}$ and
	\begin{equation} \label{eqn:simplex.parameterization}
	\mathcal{D}_{n-1} := \{(p_1, \ldots, p_{n-1}) \in \mathbb{R}^{n-1}: p_i > 0, \ p_1 + \cdots + p_{n-1} < 1\}.
	\end{equation}
	It can be verified that $\mu_0$ is the Haar measure on the unit simplex with respect to the commutative group operation $\odot$.
\end{defn}

Throughout this paper we let $c: \Delta_n \times \Delta_n \rightarrow [0, \infty)$ be the cost function defined by \eqref{eqn:cost.function}.
By Jensen's inequality we have $c(p, q) \geq 0$ for all $p, q$, and $c(p, q) = 0$ only if $p = q$. It is clear that the cost function is not symmetric in $p$ and $q$. The asymmetry is captured by the inversion \eqref{eqn:simplex.inverse}. By a straightforward computation, we have

\begin{lemma} \label{lem:cost.asymmetry}
	For any $p, q \in \Delta_n$ we have $c(q, p) = c(p^{-1}, q^{-1})$.
\end{lemma}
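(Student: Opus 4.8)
The plan is to reduce the identity to a direct substitution into the definition \eqref{eqn:cost.function}, exploiting the fact that the normalizing denominators in \eqref{eqn:simplex.inverse} contribute only an $i$-independent multiplicative constant that cancels between the two terms of $c$. Concretely, write $A := \sum_{j=1}^n 1/p_j$ and $B := \sum_{j=1}^n 1/q_j$. From \eqref{eqn:simplex.inverse} we have $(p^{-1})_i = (1/p_i)/A$ and $(q^{-1})_i = (1/q_i)/B$, so
\[
\frac{(q^{-1})_i}{(p^{-1})_i} = \frac{p_i}{q_i}\cdot \frac{A}{B}, \qquad i \in [n].
\]
The key point is that the factor $A/B$ does not depend on $i$.

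Next I would plug this into \eqref{eqn:cost.function} with $p^{-1}$ in the role of the first argument and $q^{-1}$ in the role of the second. The first term becomes
\[
\log\!\left(\frac{1}{n}\sum_{i=1}^n \frac{(q^{-1})_i}{(p^{-1})_i}\right) = \log\!\left(\frac{A}{B}\cdot \frac{1}{n}\sum_{i=1}^n \frac{p_i}{q_i}\right) = \log\frac{A}{B} + \log\!\left(\frac{1}{n}\sum_{i=1}^n \frac{p_i}{q_i}\right),
\]
while the second term becomes
\[
\frac{1}{n}\sum_{i=1}^n \log\frac{(q^{-1})_i}{(p^{-1})_i} = \frac{1}{n}\sum_{i=1}^n\left(\log\frac{p_i}{q_i} + \log\frac{A}{B}\right) = \log\frac{A}{B} + \frac{1}{n}\sum_{i=1}^n \log\frac{p_i}{q_i}.
\]

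Subtracting, the two copies of $\log(A/B)$ cancel, leaving exactly
\[
c(p^{-1}, q^{-1}) = \log\!\left(\frac{1}{n}\sum_{i=1}^n \frac{p_i}{q_i}\right) - \frac{1}{n}\sum_{i=1}^n \log\frac{p_i}{q_i} = c(q, p),
\]
which is the claim. There is essentially no obstacle here: the only thing to notice is the cancellation of the $i$-independent normalization constant, which happens precisely because the cost \eqref{eqn:cost.function} is a difference of a ``$\log$ of an average'' and an ``average of $\log$s,'' both of which are affine in an additive shift by a constant. (One can also phrase this more conceptually: $c(p,q)$ depends on $p,q$ only through the ray $(q_i/p_i)_{i}$ up to scaling, and $p \mapsto p^{-1}$ inverts this ray, so swapping arguments and inverting have the same effect on $c$.)
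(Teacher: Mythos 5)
Your computation is correct and is exactly the ``straightforward computation'' the paper alludes to (the paper gives no further detail): the normalizing constants $A/B$ from the inversion \eqref{eqn:simplex.inverse} shift both the $\log$-of-average and the average-of-$\log$ terms by the same additive constant and cancel, leaving $c(q,p)$. Nothing is missing.
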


\begin{rmk} \label{rmk:dual.simplex}
	The inversion $p \leftrightarrow p^{-1}$ sets up a duality between two copies of the simplex. Motivated by the information-geometric results and terminologies of \cite[Section 3]{PW16} we regard $p, q \in \Delta_n$ in \eqref{eqn:cost.function} as elements of the {\it dual} simplex, and $p^{-1}, q^{-1}$ as elements of the {\it primal} simplex. We call $c^*(p, q) = c(p^{-1}, q^{-1}) = c(q, p)$ the {\it dual} cost function. In this paper we focus on the dual simplex and the word dual is omitted. When the duality is important it will be made explicit, such as in Section \ref{sec:convergence}.
\end{rmk}

The following lemma gives an interesting alternative expression in terms of the group operation \eqref{eqn:simplex.group.operation}. The proof is left to the reader. 

\begin{lemma} \label{lem:cost.as.entropy}
	Given $p, q \in \Delta_n$, define $\pi = q \odot p^{-1} \in \Delta_n$. Then
	\begin{equation} \label{eqn:cost.as.entropy}
	c(p, q) = H\left( \overline{e} \mid \pi\right),
	\end{equation}
	where $H$ is the relative entropy defined on $\Delta_n \times \Delta_n$ by
	\begin{equation} \label{eqn:relative.entropy}
	H\left(p \mid q\right) := \sum_{i = 1}^n p_i \log \frac{p_i}{q_i}.
	\end{equation}
\end{lemma}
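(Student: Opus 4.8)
The plan is simply to unwind all the definitions and compute; there is no real obstacle beyond bookkeeping. First I would write out $\pi = q \odot p^{-1}$ explicitly. By \eqref{eqn:simplex.inverse} we have $(p^{-1})_i \propto 1/p_i$, and by \eqref{eqn:simplex.group.operation} the operation $\odot$ multiplies coordinates and renormalizes, so
\[
\pi_i = \frac{q_i / p_i}{\sum_{j=1}^n q_j/p_j}, \qquad i \in [n].
\]
The intermediate normalizing constant of $p^{-1}$ is absorbed into the outer normalization and plays no role.

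Next, since $\overline{e} = (1/n, \ldots, 1/n)$, formula \eqref{eqn:relative.entropy} gives
\[
H(\overline{e} \mid \pi) = \sum_{i=1}^n \frac{1}{n} \log \frac{1/n}{\pi_i} = -\log n - \frac1n \sum_{i=1}^n \log \pi_i.
\]
Substituting the expression for $\pi_i$ and using $\log \pi_i = \log(q_i/p_i) - \log\!\big(\sum_{j} q_j/p_j\big)$, the $i$-independent term pulls out of the average and combines with $-\log n$ to produce $\log\!\big(\tfrac1n \sum_j q_j/p_j\big)$, while the remaining term is $-\tfrac1n \sum_i \log(q_i/p_i)$. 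This is precisely the right-hand side of \eqref{eqn:cost.function}, so $H(\overline e \mid \pi) = c(p,q)$ as claimed.

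The only point worth noting is that the normalizing constant of $\pi$ enters $H(\overline e \mid \pi)$ with coefficient $\sum_i \tfrac1n = 1$, precisely because $\overline e$ is the uniform vector, and this is exactly what converts it into the logarithm of the arithmetic mean appearing in $c$. Conceptually, $c(p,q)$ is the difference between the log of an arithmetic mean and the arithmetic mean of the logs of the likelihood ratios $q_i/p_i$, and relative entropy against the uniform distribution reproduces this combination verbatim. No regularity hypotheses are needed since every $p,q \in \Delta_n$ has strictly positive coordinates.
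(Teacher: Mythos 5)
Your computation is correct and is exactly the direct verification the paper intends (it explicitly leaves this proof to the reader): the normalizing constant of $p^{-1}$ cancels, and the uniform weights $1/n$ in $H(\overline{e}\mid\pi)$ turn the common normalization $\log\bigl(\sum_j q_j/p_j\bigr)$ into the log of the arithmetic mean in $c(p,q)$. Nothing is missing.
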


The variable $\pi$ plays an important role throughout this paper. Following our previous works \cite{PW14, PW16, P17} we call $\pi$ the {\it portfolio vector}. Note that $p = q$ (i.e., $c(p, q) = 0$) if and only if the portfolio vector $\pi$ is equal to the barycenter $\overline{e}$. Since $H(\overline{e} | \cdot)$ is a convex function on the vector space $(\Delta_n, \odot, \otimes)$, our cost function $H(\overline{e}\mid q \odot p^{-1})$ is analogous to the cost $h(y-x)$, for $x,y\in \rr^n$, where $h$ is a convex function on $\rr^n$ (see Remark \ref{rmk:convex.cost} for more discussion).

\begin{defn}[Optimal transport cost]
	Given Borel probability measures $P, Q$ on $\Delta_n$ (written $P, Q \in \mathcal{P}(\Delta_n)$), consider the Monge-Kantorovich optimal transport problem with cost $c$. We define the optimal transport cost by
	\begin{equation} \label{eqn:transport.cost}
	\mathbf{C}(P, Q) := \inf_{R \in \Pi(P, Q)} \mathbb{E}_{(p, q) \sim R} \left[ c(p, q) \right],
	\end{equation}
	where $\Pi(P, Q)$ is the set of couplings of $P$ and $Q$.
\end{defn}

It is clear that $\cost(P, Q)$ is not a metric since it is asymmetric in $P$ and $Q$. Using the language of information geometry \cite{A16} we call $\cost$ a {\it divergence}, of which the relative entropy (also called the Kullback-Leibler divergence) is a classical example.

At several places in this paper we will also make use of the Wasserstein-2 distance defined for Borel probability measures on $\mathbb{R}^d$ by
\begin{equation} \label{eqn:Wasserstein.distance}
\mathcal{W}_2(P, Q) := \inf_{R \in \Pi(P, Q)} \left( \mathbb{E}_{(p, q) \sim R} \left[ \|p - q\|^2 \right] \right)^{1/2},
\end{equation}
where $\|\cdot \|$ is the Euclidean distance.

\subsection{Solution via exponentially concave functions} \label{sec:transport.solution}
In this subsection we describe the solution to our transport problem. First we introduce a subspace of $\mathcal{P}(\Delta_n)$ which will play the role of the classical Wasserstein space $\mathcal{W}_2(\mathbb{R}^d)$.

\begin{defn} [The classes $\mathcal{L}$ and $\mathcal{L}_a$]
	Let $P \in \mathcal{P}(\Delta_n)$ be a Borel probability measure on $\Delta_n$. We let $\mathcal{L}$ be the set of all $P \in \mathcal{P}(\Delta_n)$ such that
	\begin{equation} \label{eqn:log.moment.condition}
	\sum_{i = 1}^n \int_{\Delta_n} \left| \log p_i \right| dP(p)< \infty.
	\end{equation}
	We let $\mathcal{L}_a$ be the subset consisting of probability measures in $\mathcal{L}$ that are absolutely continuous with respect to the $(n-1)$-dimensional Lebesgue measure on $\Delta_n$.
\end{defn}

Note that if $P \in \mathcal{L}$, then the pushforward of $P$ under the inversion map $p \mapsto p^{-1}$ also belongs to $\mathcal{L}$. The same is true for $\mathcal{L}_a$. 

\begin{lemma} \label{prop:class.L}
	Let $P, Q \in \mathcal{L}$. Then for any coupling $R \in \Pi(P, Q)$ we have
	\[
	\int c(p, q) dR(p, q) \leq 2\left( \sum_{i = 1}^n  \int \left| \log p_i \right| dP(p) + \int \left| \log q_i \right| dQ(q)\right) < \infty.
	\]
	In particular, we have $\mathbf{C}(P, Q) < \infty$.
\end{lemma}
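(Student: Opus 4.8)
The plan is to prove the inequality pointwise on $\Delta_n \times \Delta_n$ and then integrate against the coupling $R$, using only the marginal constraint $R \in \Pi(P,Q)$. Since Jensen's inequality already gives $c(p,q) \ge 0$ (as noted right after \eqref{eqn:cost.function}), it suffices to produce an upper bound for $c(p,q)$; then $\lvert c(p,q)\rvert = c(p,q)$ is controlled by the same quantity.

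First I would split the cost as
\[
c(p,q) = \log\left(\frac{1}{n}\sum_{i=1}^n \frac{q_i}{p_i}\right) + \frac{1}{n}\sum_{i=1}^n \bigl(\log p_i - \log q_i\bigr)
\]
and estimate the two pieces separately. For the log-sum term I use the only genuinely structural fact: since $q_i < 1$ for every $i$, we have $q_i/p_i \le 1/p_i$, and an arithmetic mean is dominated by its maximum, so $\frac{1}{n}\sum_i q_i/p_i \le 1/\min_i p_i$. Taking logarithms and using $-\log p_i = \lvert \log p_i\rvert$ (valid since $p_i \in (0,1)$) gives
\[
\log\left(\frac{1}{n}\sum_{i=1}^n \frac{q_i}{p_i}\right) \le -\log\Bigl(\min_{1\le i\le n} p_i\Bigr) \le \sum_{i=1}^n \lvert\log p_i\rvert.
\]
The second piece is bounded by the triangle inequality by $\frac{1}{n}\sum_i \bigl(\lvert\log p_i\rvert + \lvert\log q_i\rvert\bigr) \le \sum_i \lvert\log p_i\rvert + \sum_i \lvert\log q_i\rvert$. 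Adding the two estimates yields the pointwise bound
\[
0 \le c(p,q) \le 2\sum_{i=1}^n \lvert\log p_i\rvert + \sum_{i=1}^n \lvert\log q_i\rvert \le 2\left(\sum_{i=1}^n \lvert\log p_i\rvert + \sum_{i=1}^n \lvert\log q_i\rvert\right).
\]

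Finally I would integrate this bound against $R$; since the two marginals of $R$ are $P$ and $Q$, the right-hand side becomes $2\bigl(\sum_i \int \lvert\log p_i\rvert\, dP + \sum_i \int \lvert\log q_i\rvert\, dQ\bigr)$, which is finite by the defining condition \eqref{eqn:log.moment.condition} of the class $\mathcal{L}$. The conclusion $\mathbf{C}(P,Q) < \infty$ is then immediate from the definition \eqref{eqn:transport.cost}. I do not expect any real obstacle here: the argument is a one-line pointwise estimate plus integration, and the only non-formal ingredient is the bound on the log-sum term, which uses that the coordinates of a probability vector lie strictly below $1$.
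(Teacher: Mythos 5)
Your proof is correct, and it reaches the stated bound by a slightly different decoupling than the paper uses. The paper passes to exponential coordinates $\theta_i=-\log p_i$, $\phi_i=-\log q_i$, writes $c(p,q)=\log\bigl(\frac{1}{n}\sum_i e^{\theta_i-\phi_i}\bigr)-\frac{1}{n}\sum_i(\theta_i-\phi_i)$ as in \eqref{eqn:cost.exponential}, and applies the Cauchy--Schwarz inequality to split the log-sum-exp into a term involving only $\theta$ and a term involving only $\phi$, each bounded via $\log\bigl(\frac{1}{n}\sum_i e^{2\theta_i}\bigr)\le 2\max_i|\theta_i|\le 2\sum_i|\theta_i|$; this gives the symmetric pointwise bound $\bigl(1+\frac{1}{n}\bigr)\bigl(\sum_i|\theta_i|+|\phi_i|\bigr)$ before integrating. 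You instead exploit the simplex structure directly: since $q_i<1$ you drop $q$ from the numerator of the log-mean, bound the mean by $1/\min_i p_i$, and use $p_i\in(0,1)$ to identify $-\log\min_i p_i$ with $\max_i|\log p_i|\le\sum_i|\log p_i|$, handling the remaining linear term by the triangle inequality; this yields the asymmetric bound $2\sum_i|\log p_i|+\sum_i|\log q_i|$, which still sits under the stated constant $2$. Your version is marginally more elementary (no Cauchy--Schwarz) but relies on the coordinates lying in $(0,1)$, whereas the paper's split works verbatim for any cost of log-sum-exp type in unconstrained coordinates (cf.\ Remark \ref{rmk:convex.cost}) and reuses the exponential-coordinate expression that appears elsewhere in the paper. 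The final step in both arguments is identical: integrate the pointwise bound against $R$, use the marginal constraint, and invoke \eqref{eqn:log.moment.condition} to get finiteness and hence $\mathbf{C}(P,Q)<\infty$.
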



\begin{defn} [Exponentially concave function]
	A function $\varphi : \Delta_n \rightarrow \mathbb{R}$ is exponentially concave if $e^{\varphi}$ is concave.
\end{defn}

Let $\varphi$ be exponentially concave. Since $e^{\varphi}$ is concave, by well known results in convex analysis (see \cite{R70}) it can be shown that $\varphi$ is differentiable Lebesgue almost everywhere. In particular, its gradient $\nabla \varphi$ is a.e.~defined on $\Delta_n$. 

\begin{defn}[Portfolio map]
	Let $\varphi$ be exponentially concave on $\Delta_n$. When $\varphi$ is differentiable at $r \in \Delta_n$, we define $\boldsymbol{\pi}(r) \in \overline{\Delta}_n$ by
	\begin{equation} \label{eqn:portfolio.map}
	(\boldsymbol{\pi}(r))_i = r_i \left(1 + \nabla_{e_i - r} \varphi(r)\right), \quad i = 1, \ldots, n,
	\end{equation}
	where $\{e_1, \ldots, e_n\}$ is the standard basis of $\mathbb{R}^n$ and $\nabla_{e_i - r}$ is the directional derivative. We call $\boldsymbol{\pi}$ the portfolio map generated by $\varphi$.
\end{defn}

An important property of the portfolio map is multiplicative cyclical monotonicity (see \cite[Proposition 4]{PW14}): if $m \geq 1$ and $r(0), r(1), \ldots, r(m) = r(0)$ is a cycle in $\Delta_n$, then
\begin{equation} \label{eqn:MCM}
\prod_{s = 0}^{m - 1} \left(\sum_{i = 1}^n (\boldsymbol{\pi}(r(s)))_i \frac{r_i(s + 1)}{r_i(s)}\right) \geq 1.
\end{equation}
In \cite{PW14} we showed that this condition characterizes $c$-cyclical monotonicity of our transport problem. The following result can be viewed as the analog of Brenier's theorem \cite{B91} in our context. Its proof is given in the Appendix.    

\begin{thm} \label{thm:transport.solution}
	Consider the optimal transport problem \eqref{eqn:transport.cost}. If $P \in \mathcal{L}_a$ and $Q \in \mathcal{L}$, then there exists an exponentially concave function $\varphi: \Delta_n \rightarrow \mathbb{R}$ such that the following statements hold.\footnote{Although we use the same notation $\Delta_n$, it is helpful to regard $\varphi$ as a function on the {\it primal} simplex. See Remark \ref{rmk:dual.simplex} and compare with \eqref{eqn:deterministic.transport}.}
	\begin{itemize}
		\item[(i)] If $\boldsymbol{\pi}$ is the portfolio map generated by $\varphi$, the mapping
		\begin{equation} \label{eqn:deterministic.transport}
		p \mapsto q = T(p) := p \odot \boldsymbol{\pi}(p^{-1}),
		\end{equation}
		which is $P$-a.e.~defined, pushforwards $P$ to $Q$.
		\item[(ii)] The deterministic coupling $(p, T(p))$ is optimal for the transport problem \eqref{eqn:transport.cost}, and is $P$-a.e.~unique.
	\end{itemize}
\end{thm}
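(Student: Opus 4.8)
The plan is to follow the classical Brenier/Kantorovich recipe, adapted to the multiplicative structure via the group operation $\odot$ and the logarithmic change of coordinates. First I would pass to the dual picture (Remark \ref{rmk:dual.simplex}): since the map $p \mapsto p^{-1}$ is a measure-preserving (indeed, smooth) bijection and carries $\mathcal{L}_a \to \mathcal{L}_a$ and $\mathcal{L}\to\mathcal{L}$, I can push $P$ forward to $\tilde P$ and $Q$ to $\tilde Q$ under inversion and solve the transport problem for the dual cost $c^*(\cdot,\cdot)=c((\cdot)^{-1},(\cdot)^{-1})$. So it suffices to produce an exponentially concave $\varphi$ whose portfolio map $\boldsymbol{\pi}$ realizes the $c^*$-optimal transport from $\tilde P$ to $\tilde Q$. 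The next step is the standard existence/duality argument: by Lemma \ref{prop:class.L} the transport cost is finite for $P,Q\in\mathcal{L}$, and $c$ is continuous (hence lower semicontinuous) and nonnegative on $\Delta_n\times\Delta_n$, so a Kantorovich optimizer $R^*$ exists and its support is $c$-cyclically monotone. The bridge to our setting is the result of \cite{PW14} quoted in the excerpt: $c$-cyclical monotonicity is equivalent to multiplicative cyclical monotonicity \eqref{eqn:MCM} of the associated portfolio data $(r(s),\boldsymbol{\pi}(r(s)))$ obtained via the substitution $r=p^{-1}$, $\pi=q\odot p^{-1}$.

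Given the multiplicatively cyclically monotone set, I would construct $\varphi$ by a Rockafellar-type explicit formula: fix a base point $r_0$ in the support, and for $r$ in the support set
\[
\varphi(r)=\inf\ \sum_{s=0}^{m-1}\left[\log\Big(\sum_{i=1}^n (\boldsymbol{\pi}(r(s)))_i \frac{r_i(s+1)}{r_i(s)}\Big)\right],
\]
the infimum over all finite chains $r_0,r(1),\dots,r(m)=r$ through support points, with the last link reaching $r$ continuously; condition \eqref{eqn:MCM} guarantees $\varphi$ is well-defined (bounded below) and the $\log$-sum-of-linear structure makes $e^{\varphi}$ an infimum of affine functions after the exponential change of variables, hence concave — i.e. $\varphi$ is exponentially concave. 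This is exactly the adaptation of Rockafellar's theorem that was carried out in \cite{PW14, PW16}, so I would cite those constructions rather than redo the combinatorics. Differentiating $e^\varphi$ where it is differentiable (Lebesgue-a.e., and $P$-a.e.\ since $P\in\mathcal{L}_a$ is absolutely continuous) recovers, via the envelope/first-order condition, the portfolio identity \eqref{eqn:portfolio.map}, so that the supported pairs satisfy $\pi=\boldsymbol{\pi}(r)$, i.e. $q\odot p^{-1}=\boldsymbol{\pi}(p^{-1})$, which rearranges to the transport map \eqref{eqn:deterministic.transport}. Then (i) follows: the coupling $R^*$ is supported on the graph of $T$, hence $T_\# P = Q$; and (ii) follows because a $c$-cyclically monotone coupling that is concentrated on a graph is optimal (standard Kantorovich duality / Rüschendorf-type argument), with $P$-a.e.\ uniqueness coming from a.e.\ differentiability of $e^\varphi$ plus absolute continuity of $P$.

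The main obstacle — and the place I would expect the real work to sit — is showing that the Kantorovich optimal coupling is in fact concentrated on a graph, i.e.\ that the $c^*$-optimal (equivalently, multiplicatively cyclically monotone) set is single-valued $P$-a.e. In the Euclidean Brenier theorem this is where differentiability of the convex potential and Lebesgue-negligibility of its nondifferentiability set are used. Here the analogous statement is that $e^\varphi$, being concave on the open simplex, is differentiable outside a Lebesgue-null (indeed $(n-2)$-rectifiable) set, so that $P\in\mathcal{L}_a$ assigns it zero mass; one then has to check that at every differentiability point the multiplicatively monotone set has a unique element given by \eqref{eqn:portfolio.map}, which is a convex-analytic computation: the exponential coordinates $x_i=\log r_i$ turn $\log\!\big(\sum_i \pi_i r_i(s+1)/r_i(s)\big)$ into $\log\!\big(\sum_i \pi_i e^{x_i'-x_i}\big)$, a jointly convex expression whose subdifferential in the second slot at a smooth point is the singleton $\nabla$ of the potential — this pins down $\pi$ uniquely. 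A secondary technical point is handling the boundary: $\boldsymbol{\pi}(r)$ a priori lands in $\overline{\Delta}_n$, and one must confirm the target $q=T(p)$ stays in the open simplex $P$-a.e., which follows from $Q$ being a measure on $\Delta_n$ together with the fact that the optimal coupling transports $P$ into $Q$. I would organize the write-up so these convex-analytic lemmas are quoted from \cite{PW14, PW16} wherever possible, leaving only the assembly — existence of the Kantorovich plan, extraction of the monotone set, construction of $\varphi$, graph property, optimality, uniqueness — as the new (routine) bookkeeping.
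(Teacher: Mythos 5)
Your proposal follows essentially the same route as the paper's proof: existence of a Kantorovich optimizer with $c$-cyclically monotone support, translation of $c$-cyclical monotonicity into multiplicative cyclical monotonicity \eqref{eqn:MCM} via the substitution $r = p^{-1}$, $\pi = q \odot p^{-1}$, invocation of the Rockafellar-type construction from \cite{PW14} (Propositions 4 and 6 there) to produce the exponentially concave $\varphi$, and then a.e.~differentiability of $\varphi$ together with absolute continuity of $P$ to get the graph property, optimality, and $P$-a.e.~uniqueness. The proposal is correct; the extra discussion of the boundary issue and the single-valuedness of the monotone set at differentiability points is sound elaboration of the same steps.
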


\begin{rmk} \label{rmk:convex.cost}
Our cost function \eqref{eqn:cost.function} can be expressed as a convex cost of the form $h(\theta - \phi)$ by passing to the exponential coordinate system (see the proof of Lemma \ref{prop:class.L} and \cite{PW16}). While this allows us to use the twist condition to obtain a formula of the Monge solution (see \cite[Theorem 1.17]{S15} and \cite{GM96}), the portfolio map does not occur naturally there. However, consideration of the portfolio map is crucial in our approach, especially the displacement interpolation. Our argument also has an intuitive and financial flavor as it was motivated by stochastic portfolio theory. Thus the portfolio map is an additional structure not shared by a generic convex cost. Generalizations of the portfolio map, for cost functions defined by cumulant generating functions, can be found in \cite{P17}.
\end{rmk}

Note that if we write $r = p^{-1}$, then we can write \eqref{eqn:deterministic.transport} in the form 
\begin{equation} \label{eqn:q.as.weight.ratio}
q_i = \frac{(\boldsymbol{\pi}(r))_i/r_i}{ \sum_{j = 1}^n (\boldsymbol{\pi}(r))_j/r_j}, \quad i \in [n].
\end{equation}

\subsection{$L$-divergence}
Apart from the portfolio map, an exponentially concave function on $\Delta_n$ defines another fundamental quantity called the $L$-divergence. It can be regarded as a distance-like quantity on the simplex induced by the transport map. For simplicity and to focus on the main ideas, we will impose regularity conditions on $\varphi$ whenever needed. In-depth studies of the $L$-divergence and its generalizations can be found in \cite{W15, PW16, W17, W19, WY19}. 

\begin{defn} [$L$-divergence] \label{def:L.divergence}
	Let $\varphi$ be a differentiable exponentially concave function on $\Delta_n$. The $L$-divergence of $\varphi$ is defined by
	\begin{equation} \label{eqn:L.divergence}
	{\bf D}\left[ r : r' \right] = \log \left(1 + \nabla \varphi(r') \cdot (r - r')\right) - \left( \varphi(r) - \varphi(r')\right), \quad r, r' \in \Delta_n,
	\end{equation}
	where $\nabla$ is the Euclidean gradient and $a \cdot b$ is the Euclidean dot product.
\end{defn}

By the exponential concavity of $\varphi$, it can be shown that ${\bf D}\left[ r : r' \right] \geq 0$ and ${\bf D}\left[ r : r \right] = 0$. If $e^{\varphi}$ is strictly concave, then ${\bf D}\left[ r : r' \right] > 0$ for all $r \neq r'$. Using the definition of $\boldsymbol{\pi}$ (see \eqref{eqn:portfolio.map}), we can write
\begin{equation}  \label{eqn:L.divergence.portfolio}
{\bf D}\left[ r : r' \right] = \log \left( \sum_{i = 1}^n (\boldsymbol{\pi}(r'))_i \frac{r_i}{r_i'} \right) - \left( \varphi(r) - \varphi(r')\right).
\end{equation}

\begin{exm}
	Suppose in Theorem \ref{thm:transport.solution} we let $P = Q$. Since $c(p, q) \geq 0$ and equality holds only if $p = q$, the optimal coupling is the identity $q = T(p) \equiv p$. This is induced by the distinguished exponentially concave function
	\begin{equation} \label{eqn:log.geometric.mean}
	\varphi_0(r) := \frac{1}{n}\sum_{i = 1}^n  \log r_i, \quad r \in \Delta_n.
	\end{equation}
	To see this, note that the portfolio map generated by \eqref{eqn:log.geometric.mean} is the constant map
	\begin{equation} \label{eqn:equal.weighted.portfolio}
	\boldsymbol{\pi}(r) \equiv \overline{e} = \left(\frac{1}{n}, \ldots, \frac{1}{n}\right), \quad r \in \Delta_n.
	\end{equation}
	By \eqref{eqn:deterministic.transport}, we have $q = p \odot \boldsymbol{\pi}(p^{-1}) = p \odot \overline{e} = p$.
	The induced $L$-divergence is our cost function $c$, i.e.,
	\begin{equation} \label{eqn:L.divergence.cost}
	{\bf D}\left[ r : r' \right] = c(r', r) = \log \left( \frac{1}{n}\sum_{i = 1}^n  \frac{r_i}{r_i'}\right) - \frac{1}{n}\sum_{i = 1}^n  \log \frac{r_i}{r_i'}.
	\end{equation}
\end{exm}

\medskip

Now suppose that $\varphi$ is twice differentiable. Let $\nabla^2 \varphi$ be the Euclidean Hessian of $\varphi$. Then, exponential concavity of $\varphi$ is equivalent to the condition
\begin{equation} \label{eqn:matrix.L}
L(r) := -\nabla^2 \varphi(r) - \nabla \varphi(r) \otimes \nabla \varphi(r) \geq 0, \quad \text{for all $r\in \Delta_n$},
\end{equation}
as a quadratic form. Regarding $L(r)$ as an $n \times n$ matrix, for any (column) tangent vector $v  \in \mathbb{R}^n$ with $v_1 + \cdots v_n = 0$ we have
\begin{equation}  \label{eqn:matrix.L.inequality}
v^{\top} L(r) v = - e^{-\varphi(r)} \left. \frac{d^2}{dh^2} e^{\varphi(r + hv)} \right|_{h = 0} \geq 0.
\end{equation}
This also gives the estimate
\begin{equation} \label{eqn:L.divergence.approx}
{\bf D}[r + hv : r] = \frac{h^2}{2} v^{\top} L(r) v + O(|h|^3), \quad  h \rightarrow 0.
\end{equation}

We will make use of the following result which was proved in \cite[Theorem 3.2]{PW16} using an exponential coordinate system. Also see Lemma \ref{lem:det.Jt} below which computes the Jacobian of the transport map.

\begin{lemma} \label{lem:diffeo}
	Let $\varphi: \Delta_n \rightarrow \mathbb{R}$ be $C^2$ and exponentially concave, and let $\boldsymbol{\pi}$ be the portfolio map generated by $\varphi$. If $L(r)$ is positive definite in the sense that $v^{\top} L(r) v > 0$ for all nonzero tangent vectors $v$ and all $r \in \Delta_n$, then the transport map $T(p) := p \odot \boldsymbol{\pi}(p^{-1})$ is a $C^1$-diffeomorphism from $\Delta_n$ onto its image.
\end{lemma}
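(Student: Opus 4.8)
The plan is to pass to a convenient coordinate system in which the portfolio map becomes a gradient map, and then to invoke the standard fact that the gradient of a strictly convex $C^2$ function with positive-definite Hessian is a $C^1$-diffeomorphism onto its image. Concretely, I would use the exponential (logarithmic) coordinates $\theta = (\log r_1, \ldots, \log r_{n-1}) \in \rr^{n-1}$ on $\Delta_n$, with $r_n = 1 - r_1 - \cdots - r_{n-1}$ recovered from the constraint. In these coordinates, exponential concavity of $\varphi$ says that $\psi(\theta) := e^{\varphi(r(\theta))}$, or rather a suitable potential built from $\varphi$ and $\varphi_0$, is concave; the relevant positivity condition \eqref{eqn:matrix.L} for $L(r)$ translates into strict positive-definiteness of the Hessian of the potential. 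This is precisely the setup of \cite[Theorem 3.2]{PW16}, which the excerpt tells us to use, so the first step is to recall that result and identify the potential function whose gradient encodes $\boldsymbol{\pi}$.

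The key steps, in order, would be: (1) write the transport map $T(p) = p \odot \boldsymbol{\pi}(p^{-1})$ in exponential coordinates, using the group operations $\odot$ and the inversion $p \mapsto p^{-1}$, which are affine in $\theta$; in particular via \eqref{eqn:q.as.weight.ratio} express $\log q_i - \log q_n$ as an affine function of $\log(\boldsymbol{\pi}(r))_i/r_i$; (2) identify $\boldsymbol{\pi}(r)_i/r_i = 1 + \nabla_{e_i - r}\varphi(r)$ and relate the vector of these quantities to the Euclidean gradient of an associated $C^1$ function $\Phi$ on $\rr^{n-1}$ — this is where the $L$-divergence structure \eqref{eqn:L.divergence.portfolio} enters, since $\bf D$ is (up to sign) the Bregman divergence of $\Phi$; (3) compute the Jacobian of $T$ in $\theta$-coordinates and show it equals (a product of invertible matrices times) the Hessian of $\Phi$, which is $-L(r)$ restricted to the tangent space, conjugated by the diagonal matrix $\diag(r_i)$ and the affine change of variables — hence invertible everywhere by the positive-definiteness hypothesis; (4) conclude by the inverse function theorem that $T$ is a local $C^1$-diffeomorphism, and upgrade to a global diffeomorphism onto its image by injectivity, which follows from multiplicative cyclical monotonicity \eqref{eqn:MCM} (a strict version when $L$ is positive definite, applied to $2$-cycles), or equivalently from strict monotonicity of the gradient of the strictly convex $\Phi$.

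The main obstacle I anticipate is step (3): bookkeeping the several changes of variables correctly so that the Jacobian of $T$ is manifestly conjugate to $L(r)$ (or its tangential part) and hence invertible. The composition $p \mapsto p^{-1} \mapsto \boldsymbol{\pi}(p^{-1}) \mapsto p \odot \boldsymbol{\pi}(p^{-1})$ involves two normalizations, each of which contributes a rank-one correction to the differential, and one must check these corrections act trivially on the $(n-1)$-dimensional tangent space $\{v : \sum_i v_i = 0\}$ or are otherwise absorbed. Once the Jacobian is pinned down, invertibility is immediate from \eqref{eqn:matrix.L.inequality}. The global injectivity in step (4) is the only genuinely nonlocal input, and here I would lean on \eqref{eqn:MCM}: if $T(p) = T(p')$ with $p \neq p'$, apply the cycle inequality to the $2$-cycle $r = p^{-1}, r' = (p')^{-1}$ in both directions and use that the product of the two factors is strictly less than $1$ when $L$ is positive definite (via \eqref{eqn:L.divergence.approx} and the strict concavity of $e^\varphi$ along the segment), yielding a contradiction. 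A cleaner alternative, which I would use if the coordinate computation in step (2) goes through smoothly, is simply that a $C^1$ map whose differential is everywhere a (conjugate of a) positive-definite symmetric matrix on a convex domain is injective, by integrating along segments; this sidesteps \eqref{eqn:MCM} entirely and keeps the proof self-contained given \cite[Theorem 3.2]{PW16}.
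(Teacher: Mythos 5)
Your plan is essentially the route the paper takes: the paper itself does not prove Lemma \ref{lem:diffeo} from scratch but cites the exponential-coordinate argument of \cite[Theorem 3.2]{PW16}, and Lemma \ref{lem:det.Jt} in this paper carries out precisely your step (3) — writing $T$ as the composition $p \mapsto r = p^{-1} \mapsto u \mapsto q$, applying the matrix determinant lemma to obtain $|\mathrm{det}(J_t(r))| = \frac{r_n^n}{(\boldsymbol{\pi}_t(r))_n^n}\mathrm{det}(\tilde{L}_t(r))$, and invoking the inverse function theorem. Where you genuinely differ is injectivity: the paper imports it from \cite[Proposition 2.9]{W17}, whereas you derive it from strict multiplicative cyclical monotonicity on two-cycles. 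That argument does work and buys self-containedness: if $T(p)=T(p')$ with $r=p^{-1}\neq r'=(p')^{-1}$, then by \eqref{eqn:q.as.weight.ratio} the weight-ratio vectors $\left((\boldsymbol{\pi}(r))_i/r_i\right)_i$ and $\left((\boldsymbol{\pi}(r'))_i/r'_i\right)_i$ are proportional, which forces $\left(\sum_i (\boldsymbol{\pi}(r))_i \frac{r'_i}{r_i}\right)\left(\sum_i (\boldsymbol{\pi}(r'))_i \frac{r_i}{r'_i}\right)=1$, while by \eqref{eqn:L.divergence.portfolio} this product equals $\exp\left({\bf D}[r':r]+{\bf D}[r:r']\right)>1$ because $L>0$ everywhere makes $e^{\varphi}$ strictly concave; contradiction. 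Two cautions on side remarks in your sketch: the $L$-divergence is \emph{not} (up to sign) a Bregman divergence of a potential — the induced geometry is curved, which is a main point of \cite{PW16, W17} — and correspondingly the map in step (2) is a normalized gradient (cf.\ Lemma \ref{lem:change.variables}), not a gradient map, though this does not affect the Jacobian computation; and your proposed "cleaner alternative" for injectivity (differential everywhere a conjugate of a symmetric positive-definite matrix on a convex domain) is not a valid general principle, since conjugation need not preserve positivity of the symmetric part, so you should keep the two-cycle monotonicity argument.
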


\subsection{Displacement interpolation} \label{sec:interpolation}
Let $P_0, P_1 \in \mathcal{L}_a$. By Theorem \ref{thm:transport.solution} there exists an exponentially concave function $\varphi_1$ on $\Delta_n$ such that the deterministic transport
\begin{equation} \label{eqn:transport.at.time.1}
p \mapsto q = T_1(p) := p \odot \boldsymbol{\pi}_1(p^{-1}),
\end{equation}
where $\boldsymbol{\pi}_1$ is the portfolio map generated by $\varphi_1$, is the a.e.~unique solution of the transport problem for the pair $(P_0, P_1)$.

\begin{notn} \label{not:portfolio.maps}
	If $\boldsymbol{\pi}$ is a portfolio map, we use $(\boldsymbol{\pi})_i(\cdot)$ (but not $\boldsymbol{\pi}_i(\cdot)$) to denote its $i$-th component. Thus the symbol $\boldsymbol{\pi}_1$ in \eqref{eqn:transport.at.time.1} means the transport map ``at time $1$" but not its components which are denoted by $(\boldsymbol{\pi}_1)_i$. For a fixed element $\pi \in \Delta_n$ (without bold font) we denote its components by $\pi = (\pi_i)$.
\end{notn}

Recall the exponentially concave function $\varphi_0$ defined by \eqref{eqn:log.geometric.mean}. Using the inequality of the arithmetic and geometric means, it is easy to see that the function
\begin{equation} \label{eqn:varphi.t}
\varphi_t := (1 - t) \varphi_0 + t \varphi_1
\end{equation}
is exponentially concave for $0 \leq t \leq 1$. From \eqref{eqn:portfolio.map}, it generates a portfolio map $\boldsymbol{\pi}_t$ which is a linear interpolation between the equal-weighted portfolio and $\boldsymbol{\pi}_1$:
\begin{equation} \label{eqn:pi.t}
\boldsymbol{\pi}_t = (1 - t) \overline{e} + t \boldsymbol{\pi}_1
\end{equation}
This leads to the definition, taken from \cite{PW16}, of displacement interpolation for our transport problem.

\begin{defn}[Displacement interpolation] \label{def:displacement.interpolation}
	Let $P_0, P_1 \in \mathcal{L}_a$. For $0 \leq t \leq 1$, let $T_t$ be the map defined $P_0$-a.e.~by
	\begin{equation} \label{eqn:transport.time.t}
	T_t(p) = p \odot \boldsymbol{\pi}_t(p^{-1}),
	\end{equation}
	where $\boldsymbol{\pi}_t$ is given by \eqref{eqn:pi.t}. We define the displacement interpolation $\{P_t\}_{0 \leq t \leq 1}$ by
	\begin{equation} \label{eqn:displacement.interpolation}
	P_t := (T_t)_{\#} P_0.
	\end{equation}
\end{defn}

\begin{rmk} \label{rmk:straight.line}
	We emphasize that our displacement interpolation is fundamentally different from the one defined by McCann \cite{M97} for the quadratic cost $\|p - q\|^2$ on $\mathbb{R}^d$. If we let $[P, P']_t$ denote McCann's displacement interpolation for the measures $P$ and $P'$, then each individual particle travels along a constant velocity straight line and the following properties hold: (i) (time symmetry) $[P, P']_t = [P', P]_{1 - t}$ and (ii) (time consistency) $[[P, P']_t, [P, P']_{t'}]_s = [P, P']_{(1 - s) t + st'}$. Simple examples show that both properties fail for our interpolation \eqref{eqn:displacement.interpolation}. However, our interpolation has the intermediate optimality  property (for each pair $(P_0, P_t)$) that the McCann interpolation in this case does not. In Section \ref{sec:dynamic} we will relate our interpolation with a Lagrangian action. From \eqref{eqn:transport.time.t} (also see \cite{PW16}) it follows that for $p$ fixed, the path $\{p \odot \boldsymbol{\pi}_t(p^{-1})\}_{0 \leq t \leq 1}$ is a straight line in the (dual) simplex $\Delta_n$ run at non-uniform speed. 
\end{rmk}

In order that our displacement interpolation makes sense we need to show that $P_t \in \mathcal{L}_a$ for each $t$. This is accomplished in the next proposition (c.f. \cite[Proposition 5.19(iii)]{V03}) whose proof can be found in the Appendix. 

\begin{prop} \label{prop:interpolation}
	For $P_0, P_1 \in \mathcal{L}_a$, we have $P_t \in \mathcal{L}_a$ for each interpolant of the displacement interpolation $\{P_t\}_{0 \leq t \leq 1}$. If $P_0 \neq P_1$, the transport cost $\mathbf{C}(P_0, P_t)$ is smooth, increasing and strictly convex in $t$.
\end{prop}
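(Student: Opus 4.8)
The plan is to exploit the linearity $\boldsymbol{\pi}_t = (1-t)\overline{e} + t\boldsymbol{\pi}_1$ of the interpolating portfolio maps, which makes the interpolation explicit in $t$. I would first record a convex-combination formula for $T_t$: writing $r = p^{-1}$, $\nu = \boldsymbol{\pi}_1(r)\in\overline{\Delta}_n$, and $D(p) = \sum_{j}p_j\nu_j\in(0,1]$, and using $(\boldsymbol{\pi}_0)_i(\cdot)\equiv 1/n$ in \eqref{eqn:q.as.weight.ratio}, a short computation gives
\[
(T_t(p))_i = \lambda(p)\,p_i + \big(1-\lambda(p)\big)(T_1(p))_i,\qquad \lambda(p)=\frac{(1-t)/n}{(1-t)/n+tD(p)}\in[0,1],
\]
so each particle moves along the Euclidean segment $[\,p,\,T_1(p)\,]$ (the ``straight line run at non-uniform speed'' of Remark \ref{rmk:straight.line}). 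In particular $(T_t(p))_i$ lies between $p_i$ and $(T_1(p))_i$, hence $|\log (T_t(p))_i|\le|\log p_i|+|\log (T_1(p))_i|$; integrating against $P_0$ and using $(T_1)_{\#}P_0=P_1$,
\[
\sum_{i=1}^n\int_{\Delta_n}|\log (T_t(p))_i|\,dP_0(p)\ \le\ \sum_{i=1}^n\int|\log p_i|\,dP_0+\sum_{i=1}^n\int|\log q_i|\,dP_1\ <\ \infty
\]
because $P_0,P_1\in\mathcal{L}$; thus $P_t\in\mathcal{L}$, and together with the absolute continuity below this gives $P_t\in\mathcal{L}_a$.

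For absolute continuity I would establish a quantitative strict exponential concavity of $\varphi_t$ for $t\in[0,1)$: from \eqref{eqn:matrix.L.inequality}, the factorization $e^{\varphi_t}=(e^{\varphi_0})^{1-t}(e^{\varphi_1})^t$ and the convexity of $x\mapsto x^2$ yield $v^\top L_{\varphi_t}(r)v\ge(1-t)\,v^\top L_{\varphi_0}(r)v$ for tangent vectors $v$, while a direct Cauchy--Schwarz computation with $\varphi_0=\tfrac1n\sum_i\log r_i$ shows that $L_{\varphi_0}(r)$ is positive definite on tangent vectors, uniformly on compact subsets of $\Delta_n$. Hence for $t<1$ the map $T_t$ is injective with an a.e.\ positive Jacobian, and Lemma \ref{lem:det.Jt} together with the change-of-variables formula gives $P_t=(T_t)_{\#}P_0\ll\mathrm{Leb}$ with density controlled by that of $P_0$. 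Since $\varphi_1$ is only assumed exponentially concave, so that $e^{\varphi_1}$ is merely twice differentiable in the Alexandrov sense, this step should be carried out either by invoking the area formula for the a.e.-differentiable map $T_t$, or by mollifying $e^{\varphi_1}$ to smooth strictly exponentially concave functions, applying Lemma \ref{lem:diffeo} with the uniform bound above, and passing to the weak limit. This is the step I expect to be the main obstacle; the rest is computation.

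Finally, since $\boldsymbol{\pi}_t$ is generated by the exponentially concave $\varphi_t$, the coupling $(p,T_t(p))$ is multiplicatively cyclically monotone by \eqref{eqn:MCM}, hence $c$-cyclically monotone, hence optimal for the pair $(P_0,P_t)$ (the transport cost being finite by Lemma \ref{prop:class.L}); this is the intermediate optimality noted in Remark \ref{rmk:straight.line}, and it gives $\mathbf{C}(P_0,P_t)=\int c(p,T_t(p))\,dP_0(p)$. Using $T_t(p)\odot p^{-1}=\boldsymbol{\pi}_t(p^{-1})=(1-t)\overline{e}+t\nu$ with $\nu=\boldsymbol{\pi}_1(p^{-1})$ and Lemma \ref{lem:cost.as.entropy},
\[
c(p,T_t(p))=H\big(\overline{e}\,\big|\,(1-t)\overline{e}+t\nu\big)=-\log n-\frac1n\sum_{i=1}^n\log\Big(\tfrac{1-t}{n}+t\nu_i\Big)=:g_p(t).
\]
Differentiation gives $g_p'(0)=-\sum_i(\nu_i-\tfrac1n)=0$ and $g_p''(t)=\tfrac1n\sum_i(\nu_i-1/n)^2\big((1-t)/n+t\nu_i\big)^{-2}\ge0$, with strict inequality unless $\nu=\overline{e}$; so each $g_p$ is smooth on $[0,1)$, increasing and convex. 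On $[0,1-\delta]$ the denominators are $\ge\delta/n$, so all $t$-derivatives of $g_p$ are bounded uniformly in $p$, which justifies differentiating $\mathbf{C}(P_0,P_t)=\int g_p(t)\,dP_0(p)$ under the integral; by monotone convergence $\mathbf{C}(P_0,P_t)$ is moreover continuous and finite at $t=1$, hence smooth on $[0,1)$ and convex and increasing on $[0,1]$. If $P_0\ne P_1$ then $T_1\ne\mathrm{id}$, equivalently $\nu=T_1(p)\odot p^{-1}\ne\overline{e}$, on a set of positive $P_0$-measure; there $g_p''>0$, whence $\tfrac{d^2}{dt^2}\mathbf{C}(P_0,P_t)=\int g_p''\,dP_0>0$ (strict convexity) and $\tfrac{d}{dt}\mathbf{C}(P_0,P_t)=\int_0^t\!\int g_p''(s)\,dP_0\,ds>0$ for $t\in(0,1)$ (strict monotonicity).
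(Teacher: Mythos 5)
Your treatment of the class-$\mathcal{L}$ membership (via the straight-line property of Remark \ref{rmk:straight.line}) and of the second claim (intermediate optimality via multiplicative cyclical monotonicity, then the explicit formula $c(p,T_t(p)) = H(\overline{e}\mid(1-t)\overline{e}+t\nu)$ and its $t$-derivatives) matches the paper's proof essentially step for step; the paper simply cites ``properties of the relative entropy'' (\cite[Theorem 2.7.2]{CT06}) where you differentiate by hand, and it leaves the intermediate optimality implicit where you spell it out. Both of those parts are fine.

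The genuine gap is exactly where you flagged it: absolute continuity of $P_t$. Your route through $\det(\tilde{L}_t)$ and Lemma \ref{lem:det.Jt} needs $\varphi_1 \in C^2$, but the proposition assumes only $P_0, P_1 \in \mathcal{L}_a$, so Theorem \ref{thm:transport.solution} delivers a $\varphi_1$ that is merely exponentially concave and differentiable a.e. Neither of your proposed repairs is routine: the Alexandrov/area-formula route still requires showing $T_t$ is injective and that the singular part of the second derivative does not cause mass of $P_0$ to concentrate on a Lebesgue-null image, and the mollification route requires passing absolute continuity through a weak limit of pushforwards, which fails in general without uniform control. The paper avoids second derivatives entirely: it shows that the unnormalized weight-ratio map $r \mapsto {\bf w}_t(r) = \boldsymbol{\pi}_t(r)/r$ satisfies
\[
\left\| {\bf w}_t(r') - {\bf w}_t(r) \right\| \;\geq\; (1-t)\,\frac{c(r,r') + c(r',r)}{\|r'-r\|},
\]
obtained by pairing ${\bf w}_t(r')-{\bf w}_t(r)$ with $r'-r$, using $\log(1+x)\le x$ and the multiplicative cyclical monotonicity \eqref{eqn:MCM} of $\boldsymbol{\pi}_1$ to discard the $t$-part. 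Since $c(r,r')+c(r',r)$ is comparable to $\|r-r'\|^2$ locally (this only uses the smooth $\varphi_0$, via \eqref{eqn:L.divergence.approx}), the inverse of $p \mapsto {\bf w}_t(p^{-1})$ is locally Lipschitz, hence maps Lebesgue-null sets to Lebesgue-null sets, and $P_t = (T_t)_\#P_0 \ll \mathrm{Leb}$ follows. Your observation that $L_{\varphi_t} \succeq (1-t)L_{\varphi_0} \succ 0$ is correct and is the infinitesimal version of this same coercivity, but as stated your argument does not close under the stated hypotheses; the paper's first-order monotonicity argument is the way to make the $(1-t)$-coercivity rigorous without regularity assumptions on $\varphi_1$.
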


This definition of displacement interpolation is tailored for our cost function. In fact, we will show in Section \ref{sec:entropy} that under suitable technical conditions on $P_0$ and $P_1$ the map $t \mapsto \mathrm{Ent}_{\mu_0} (P_t) + n \mathbf{C}(P_0, P_t)$ is convex, where $\mathrm{Ent}_{\mu_0} (P_t)$ is the entropy of $P_t$ with respect to the reference measure $\mu_0$.

\section{Multiplicative Schr\"{o}dinger problem} \label{sec:MSP}
In this section we present a probabilistic solution to the transport problem in terms of an independent particle system driven by Dirichlet processes. We first tackle the static transport problem \eqref{eqn:transport.cost} and then formulate and prove a dynamic version that is consistent with our displacement interpolation.

\subsection{The Dirichlet transport}
Consider the gamma distribution, a two-parameter family $\{\mathrm{Gamma}(\alpha, \beta): \alpha > 0, \beta > 0\}$ of probability distributions on $(0, \infty)$. The density function is given by
\[
\frac{\beta^{\alpha}}{\Gamma(\alpha)} y^{\alpha - 1} e^{-\beta y}, \quad y > 0,
\]
where $\Gamma(\cdot)$ is the gamma function. We write $\mathrm{Gamma}(\alpha) = \mathrm{Gamma}(\alpha, 1)$.

Let $p = (p_1, \ldots, p_n) \in \Delta_n$ be given, and let $G_1, \ldots, G_n$ be independent such that $G_i \sim \mathrm{Gamma}(\alpha_i)$, for some constants $\alpha_1, \ldots, \alpha_n > 0$. Define the $\Delta_n$-valued random vector $Q = (Q_1, \ldots, Q_n)$ where
\begin{equation} \label{eqn:p.dot.G}
Q_i = \frac{p_i G_i}{\sum_{j = 1}^n p_j G_j}, \quad 1 \leq i \leq n.
\end{equation}
If we let $D = (D_1, \ldots, D_n)$ where $D_i = \frac{G_i}{\sum_{j = 1}^n G_j}$, then $D$ has the Dirichlet distribution with parameters $(\alpha_1, \ldots, \alpha_n)$. Using the group operation \eqref{eqn:simplex.group.operation} we can write $Q = p \odot D$. Intuitively, we think of $p$ and $Q$ as the positions of a particle at time zero and time one respectively.

Let us find the distribution of $Q$. On the unit simplex $\Delta_n$ we use the Euclidean coordinate system $(p_1, \ldots, p_{n-1})$ where the last component $p_n$ is dropped. The range of $(p_1, \ldots, p_{n-1})$ is the domain $\mathcal{D}_{n-1}$ defined in \eqref{eqn:simplex.parameterization}.

\begin{lemma} \label{lem:Dirichlet.transport.density1}
	For $p \in \Delta_n$ and $\alpha_1, \ldots, \alpha_n > 0$ fixed, the density of $Q$ (or, rather, $(Q_1, \ldots, Q_{n-1})$) with respect to the Lebesgue measure on $\mathcal{D}_{n-1}$ is given by
	\begin{equation} \label{eqn:p.dot.D.density}
	f(q \mid p) = \frac{\Gamma(\sum_{j = 1}^n \alpha_j)}{\prod_{j = 1}^n { q_j} \Gamma(\alpha_j)} \prod_{i = 1}^n \left( \frac{q_i}{p_i}\right)^{{ \alpha_i } } \left(\sum_{i = 1}^n \frac{q_i}{p_i}\right)^{-\sum_{j = 1}^n \alpha_j},
	\end{equation}
	where $q_n:=1- \sum_{i=1}^{n-1} q_i$.
\end{lemma}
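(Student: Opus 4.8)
The plan is to compute the density of $Q$ by a change of variables from the gamma vector $G$, using the well-known decomposition of independent gammas into a Dirichlet vector times an independent gamma sum. First I would recall that if $G_i \sim \mathrm{Gamma}(\alpha_i)$ independently, then $S := \sum_j G_j \sim \mathrm{Gamma}(\sum_j \alpha_j)$ is independent of the normalized vector $D = (G_1/S, \ldots, G_n/S)$, which has the Dirichlet$(\alpha_1, \ldots, \alpha_n)$ density
\[
\frac{\Gamma(\sum_j \alpha_j)}{\prod_j \Gamma(\alpha_j)} \prod_{i=1}^n d_i^{\alpha_i - 1}
\]
with respect to Lebesgue measure on $\mathcal{D}_{n-1}$ (in the coordinates $(d_1, \ldots, d_{n-1})$). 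Since $Q = p \odot D$ depends only on $D$, it suffices to push this Dirichlet density forward under the map $D \mapsto Q = p \odot D$.

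Next I would carry out this pushforward explicitly. In coordinates, writing $q_i = p_i d_i / \sum_j p_j d_j$, one inverts to get $d_i \propto q_i / p_i$, i.e. $d_i = (q_i/p_i) / \sum_j (q_j/p_j)$. I would then compute the Jacobian of the map $(d_1, \ldots, d_{n-1}) \mapsto (q_1, \ldots, q_{n-1})$ (equivalently, its inverse). A clean way is to use the $\odot$-group structure: the map $d \mapsto p \odot d$ is a diffeomorphism of $\Delta_n$, and since $\mu_0$ from Definition \ref{def:entropic.measure} is the Haar measure for $\odot$, the density $\prod_i d_i^{-1}$ is $\odot$-invariant; hence the Jacobian factor relating Lebesgue measure $dd_1 \cdots dd_{n-1}$ to $dq_1 \cdots dq_{n-1}$ is exactly $\frac{\prod_i d_i}{\prod_i q_i} \cdot (\text{something})$ — more precisely, $d(q_1,\ldots,q_{n-1}) = \left(\prod_i \frac{q_i}{d_i}\right) \cdot J \, d(d_1,\ldots,d_{n-1})$ for the appropriate scalar, so I would track this carefully. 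Substituting $d_i = (q_i/p_i)/\sum_j(q_j/p_j)$ into $\prod_i d_i^{\alpha_i-1}$ and combining with the Jacobian produces the factors $\prod_i (q_i/p_i)^{\alpha_i}$, $(\sum_i q_i/p_i)^{-\sum_j \alpha_j}$, and $\prod_i q_i^{-1}$ in \eqref{eqn:p.dot.D.density}.

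The main obstacle I anticipate is bookkeeping the Jacobian determinant correctly in the dropped-coordinate parameterization, since the constraint $\sum_i q_i = \sum_i d_i = 1$ makes the naive $n \times n$ computation degenerate and one must work with the $(n-1) \times (n-1)$ reduced system. The slick route is to avoid the determinant computation entirely: use the $\odot$-invariance of $\mu_0$ to write the Dirichlet law as $\frac{\Gamma(\sum_j\alpha_j)}{\prod_j\Gamma(\alpha_j)} \prod_i d_i^{\alpha_i} \, d\mu_0(d)$, observe that pushing forward under $d \mapsto p\odot d$ leaves the $\mu_0$-part invariant while transforming $\prod_i d_i^{\alpha_i}$ into $\prod_i ((p^{-1}\odot q)_i)^{\alpha_i} = \left(\prod_i (q_i/p_i)^{\alpha_i}\right)\left(\sum_j q_j/p_j\right)^{-\sum_i\alpha_i}$ using $p^{-1}\odot q = (q_i/p_i)/\sum_j(q_j/p_j)$, and finally rewrite $d\mu_0(q) = \prod_i q_i^{-1} \, dq_1\cdots dq_{n-1}$ to land on \eqref{eqn:p.dot.D.density}. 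I would present the proof this way, as it is both shorter and conceptually transparent, falling back on the direct Jacobian computation only if the reader prefers an elementary verification.
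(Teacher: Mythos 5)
Your proposal is correct, but it takes a different route from the paper. The paper works upstream of the simplex: it sets $Y_i = p_iG_i$, performs the $n$-dimensional change of variables $(y_1,\ldots,y_n)\mapsto(q_1,\ldots,q_{n-1},s)$ with $s=\sum_j y_j$ (Jacobian $s^{-(n-1)}$ via the matrix determinant lemma), and then integrates out $s$ with the gamma integral, so no simplex-constrained Jacobian or prior knowledge of the Dirichlet density is needed. You instead quote the Dirichlet law of $D=G/S$, note $Q=p\odot D$ depends only on $D$, and push the density forward under $d\mapsto p\odot d$; your ``slick route'' then replaces the Jacobian bookkeeping by the invariance of $\mu_0$ under $\odot$, computing $(p^{-1}\odot q)_i=(q_i/p_i)/\sum_j(q_j/p_j)$ and converting back to Lebesgue measure via the factor $\prod_i q_i^{-1}$ — this does land exactly on \eqref{eqn:p.dot.D.density}, and it is conceptually transparent, making visible why the reference measure $\mu_0$ appears in the formula. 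Two caveats: the intermediate Jacobian identity you sketch in your second paragraph (the ``$\bigl(\prod_i q_i/d_i\bigr)\cdot J$'' expression) is left vague and should either be done properly or dropped in favor of the Haar argument; and the Haar property of $\mu_0$ is only asserted in Definition \ref{def:entropic.measure} (``it can be verified''), so if you want a self-contained proof you must verify it, which amounts to essentially the same $(n-1)\times(n-1)$ Jacobian computation (determinant $\prod_i p_i/(\sum_j p_jd_j)^n$) that the paper's direct derivation sidesteps by working in $(0,\infty)^n$. In short: your argument is valid and arguably more structural, the paper's is more elementary and self-contained.
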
 
\begin{proof}
	Let $Y_i = p_i G_i$ which is distributed as $\mathrm{Gamma}(\alpha_i, 1/p_i)$. Then the joint density of $(Y_1, \ldots, Y_n)$ on $(0, \infty)^n$ is given by the product
	\[
	\prod_{i = 1}^n \frac{p_i^{-\alpha_i}}{\Gamma(\alpha_i)} y_i^{\alpha_i - 1} e^{-y_i/p_i}.
	\]
	Consider the change of variable $(y_1, \ldots, y_n) \mapsto (q_1, \ldots, q_{n-1}, s)$, where $s = \sum_{j = 1}^n y_j$ and $q_i = y_i/s$. It can be easily verified (matrix determinant lemma) that the Jacobian determinant of this transformation is {$s^{-(n-1)}$}. Also let $q_n:=1- \sum_{i=1}^{n-1} q_{i}$. Then the joint density of $(q_1, \ldots, q_{n-1}, s)$ is given by
	\begin{equation} \label{eqn:density.qs}
	s^{n -1 + \sum_{j = 1}^n (\alpha_j - 1)} e^{-s \sum_{j = 1}^n q_j/p_j} \prod_{i = 1}^n \frac{p_i^{-\alpha_i}}{\Gamma(\alpha_i)} q_i^{\alpha_i - 1}.
	\end{equation}
	Since
	\[
	\int_0^{\infty} s^{n -1 + \sum_{j = 1}^n \alpha_j } e^{-s \sum_{j = 1}^n q_j/p_j} ds = \Gamma\left(\sum_{j = 1}^n \alpha_j\right) \left( \sum_{j = 1}^n \frac{q_j}{p_j} \right)^{-\sum_{j = 1}^n \alpha_j},
	\]
	integrating \eqref{eqn:density.qs} with respect to $s$ gives the result.
\end{proof}

\begin{lemma}  \label{lem:Dirichlet.transport.density2}
	For $\lambda > 0$ let $f_{\lambda}(q \mid p)$ be the density in \eqref{eqn:p.dot.D.density} where $\alpha_i = \frac{\lambda}{n}$ for all $i$. Then
	\begin{equation} \label{eqn:density.qs2}
	f_{\lambda}(q \mid p) = \frac{\Gamma(\lambda)}{ \Gamma(\lambda/n)^n} \frac{1}{\prod_{i = 1}^n {q_i}} \prod_{i = 1}^n \left( \frac{q_i}{p_i}\right)^{ {\frac{\lambda}{n} }} \left( \sum_{i = 1}^n \frac{q_i}{p_i}\right)^{-\lambda}.
	\end{equation}
	Moreover, we have
	\begin{equation} \label{eqn:density.LDP}
	\lim_{\lambda \rightarrow \infty} \frac{-1}{\lambda} \log f_{\lambda}(q \mid p) = c(p, q),
	\end{equation}
	where $c(p, q)$ defined by \eqref{eqn:cost.function} is our cost function, and the convergence holds locally uniformly on $\Delta_n \times \Delta_n$.
\end{lemma}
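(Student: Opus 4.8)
The plan is to reduce everything to Stirling's formula after taking logarithms. The first claim \eqref{eqn:density.qs2} is immediate: substituting $\alpha_i = \lambda/n$ into \eqref{eqn:p.dot.D.density} gives $\sum_{j=1}^n \alpha_j = \lambda$ and $\prod_{j=1}^n \Gamma(\alpha_j) = \Gamma(\lambda/n)^n$, and the stated formula follows at once.

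For the second claim, I would take the logarithm of \eqref{eqn:density.qs2} and divide by $-\lambda$, obtaining
\[
\frac{-1}{\lambda}\log f_\lambda(q \mid p) = \frac{-1}{\lambda}\left(\log\Gamma(\lambda) - n\log\Gamma(\lambda/n)\right) + \frac{1}{\lambda}\sum_{i=1}^n \log q_i + \log\left(\sum_{i=1}^n \frac{q_i}{p_i}\right) - \frac{1}{n}\sum_{i=1}^n \log\frac{q_i}{p_i}.
\]
By the definition \eqref{eqn:cost.function} of the cost, the last two terms equal $c(p,q) + \log n$ and are independent of $\lambda$. So the lemma reduces to showing that the first term converges to $-\log n$ and the second term converges to $0$, both locally uniformly on $\Delta_n \times \Delta_n$.

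The term $\frac{1}{\lambda}\sum_i \log q_i$ is harmless: on any compact $K \subset \Delta_n$ the quantity $\sum_i |\log q_i|$ is bounded, so this term is $O(1/\lambda)$ uniformly on $K$. For the gamma term I would apply Stirling's expansion $\log\Gamma(x) = x\log x - x - \tfrac12\log x + \tfrac12\log(2\pi) + o(1)$ as $x \to \infty$ (or, more economically, $\log\Gamma(\lambda) = \lambda\log\lambda + O(\log\lambda)$ together with $\log(\lambda/n) = \log\lambda - \log n$) to both $\log\Gamma(\lambda)$ and $\log\Gamma(\lambda/n)$. The leading terms combine to give $\log\Gamma(\lambda) - n\log\Gamma(\lambda/n) = \lambda\log\lambda - \lambda\log(\lambda/n) + O(\log\lambda) = \lambda\log n + O(\log\lambda)$, hence $\frac{-1}{\lambda}(\log\Gamma(\lambda) - n\log\Gamma(\lambda/n)) = -\log n + O(\log\lambda/\lambda)$. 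Since this quantity carries no $(p,q)$-dependence whatsoever, its convergence is trivially uniform. Putting the three pieces together yields $\frac{-1}{\lambda}\log f_\lambda(q\mid p) \to -\log n + 0 + c(p,q) + \log n = c(p,q)$, uniformly on compact subsets of $\Delta_n \times \Delta_n$, which is the asserted local uniform convergence.

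There is essentially no serious obstacle here: the only care required is in bookkeeping the Stirling expansion and in observing that the $(p,q)$-dependence is entirely confined to the two explicit terms forming $c(p,q) + \log n$ and to the negligible term $\frac{1}{\lambda}\sum_i \log q_i$, so that local uniformity is automatic.
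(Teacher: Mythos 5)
Your proposal is correct and follows essentially the same route as the paper: substitute $\alpha_i = \lambda/n$ into \eqref{eqn:p.dot.D.density}, split $\frac{-1}{\lambda}\log f_\lambda$ into the cost plus $\log n$, the $O(1/\lambda)$ term $\frac{1}{\lambda}\sum_i \log q_i$, and the gamma-function term handled by Stirling, which contributes $-\log n$. Your remark that local uniformity is automatic because the $\lambda$-dependent remainders are either $(p,q)$-free or bounded on compacts is the same observation the paper makes implicitly.
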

\begin{proof}
	The formula \eqref{eqn:density.qs2} of the density follows directly from Lemma \ref{lem:Dirichlet.transport.density1}. Now take logarithm, divide by $\lambda$ and take the limit as $\lambda \rightarrow \infty$. It is easy to see that the following limit holds uniformly over compact sets:
	\begin{equation} \label{eqn:density.limit.computation}
	\begin{split}
	\lim_{\lambda \rightarrow \infty} \frac{-1}{\lambda} \log f_T(q \mid p) &= \log \left( \sum_{j = 1}^n \frac{q_j}{p_j}\right) - \frac{1}{n} \sum_{j = 1}^n \log \frac{q_j}{p_j} + \lim_{\lambda \rightarrow \infty} \frac{1}{\lambda} \sum_{i = 1}^n \log {q_i} \\
	&\quad - \lim_{\lambda \rightarrow \infty} \frac{1}{\lambda} \left[ \log \Gamma(\lambda) - n \log \Gamma(\lambda/n)\right]\\
	&= c(p, q) + \log n - \lim_{\lambda \rightarrow \infty} \left[ \log \Gamma(\lambda) - n \log \Gamma(\lambda/n)\right].
	\end{split}
	\end{equation}
	By Stirling's approximation, we have
	\[
	\frac{1}{\lambda} \left[ \log \Gamma(\lambda) - n \log \Gamma\left(\frac{\lambda}{n}\right)\right] = \frac{1}{\lambda} \left[ \lambda \log \lambda - \lambda \log \frac{\lambda}{n}\right] + \frac{O(\log \lambda)}{\lambda}.
	\]
	Hence
	\[
	\lim_{\lambda \rightarrow \infty} \left[ \log \Gamma(\lambda) - n \log \Gamma\left(\frac{\lambda}{n}\right)\right] = \log n
	\]
	and we obtain the desired limit \eqref{eqn:density.LDP}.
\end{proof}

The limit \eqref{eqn:density.LDP} suggests (and it is not hard to prove) that the family of measures corresponding to the densities $\{f_{\lambda}( \cdot \mid p)\}_{\lambda > 0}$ satisfies a large deviations principle (LDP), as $\lambda \rightarrow \infty$, with rate $\lambda$ and a good rate function $c(p, \cdot)$. 

\begin{rmk}
	Note the appearance of the term $1 / \prod_{i = 1}^n q_i$ which is the density of our reference measure $\mu_0$, the Dirichlet distribution with zero parameters. If we let $\tilde{f}_{\lambda}$ be the density of $Q$ with respect to this measure, then $\tilde{f}_{\lambda}(q \mid p) = e^{-\lambda c(p, q) + K_{\lambda}}$ where $K_{\lambda}$ is a normalizing constant. This parallels the quadratic case where one considers the density of $N(x, \sigma^2 I)$ with respect to the Lesbesgue measure and $\sigma^2 \rightarrow 0$. 
\end{rmk}

\subsection{Discrete Schr\"odinger problem: the particle system} \label{sec:particle.system}
Let $P_0, P_1 \in \mathcal{L}_a$. In \cite{L12} C.~L\'{e}onard used gamma convergence to show that the optimal coupling of the Monge-Kantorovich problem can be recovered from the so-called Schr\"{o}dinger problem which minimizes an entropic cost. The specific case of quadratic cost where the solutions can be recovered using Brownian motion was studied much earlier by Mikami \cite{Mikami04} using a stochastic control approach.

While we will keep the spirit,  we deviate from both these approaches and characterize the solution as the limit of explicit couplings constructed from a particle system. In particular, this allows us to avoid the somewhat heavy analytic machinery behind gamma convergence. It will be clear that our methods are robust and can be applied to other cost functions as soon as suitable stochastic processes are identified. 

Given $P_0$ and $P_1$, let $(\Omega, \mathcal{F}, \mathbb{P})$ be a probability space over which the following pair of independent random vectors are defined: $p(1), p(2), \ldots$ are sampled i.i.d.~from $P_0$, and $q(1), q(2), \ldots$ are sampled i.i.d.~from $P_1$. For $N \geq 1$, consider the corresponding empirical measures
\begin{equation} \label{eqn:empirical.measures}
L_N(0) := \frac{1}{N} \sum_{i = 1}^N \delta_{p(i)}, \quad L_N(1) := \frac{1}{N} \sum_{j = 1}^N \delta_{q(j)}
\end{equation}
that are random elements of $\mathcal{P}(\Delta_n)$.

{Let $\lambda > 0$ be given.} Given the realizations $\{p(i)\}$ and $\{q(j)\}$, we construct a coupling $M_N$ of $L_N(0)$ and $L_N(1)$ using the density \eqref{eqn:density.qs2}. Let $\mathcal{S}_N$ be the group of permutations of $N$ labels. For each $\sigma \in \mathcal{S}_N$, let
\begin{equation} \label{eqn:M.N.sigma}
M_N^{\sigma} := \frac{1}{N} \sum_{i = 1}^N \delta_{(p(i), q(\sigma(i)))}.
\end{equation}
We define $M_N$ as a mixture of $\{M_N^{\sigma}\}_{\sigma \in \mathcal{S}_N}$:
\begin{equation} \label{eqn:M.N}
M_N := \sum_{\sigma \in \mathcal{S}_N} \nu_N^{\sigma} M_N^{\sigma},
\end{equation}
where the weight $\nu_N^{\sigma}$ is given by
\begin{equation} \label{eqn:mixture.weight}
\nu_N^{\sigma} := \frac{\prod_{i = 1}^N f_{\lambda}(q(\sigma(i)) \mid p(i))}{\sum_{\rho \in \mathcal{S}_N} \prod_{i = 1}^N f_{\lambda}(q(\rho(i)) \mid p(i))}, \quad \sigma \in \mathcal{S}_N.
\end{equation}
Since each $M_N^{\sigma}$ couples $L_N(0)$ and $L_N(1)$, so does the mixture $M_N$. Our aim is to prove that $M_N$ converges to the optimal coupling $R^*$ of $(P_0, P_1)$ as $N \rightarrow \infty$ and $\lambda = \lambda_N \rightarrow \infty$ at a suitable rate. 

\begin{rmk}[Discrete Schr\"odinger problem] \label{rmk:discrete.Schrodinger}
	Let us relate the coupling \eqref{eqn:mixture.weight} with the Schr\"odinger problem. First we recall the Schr\"odinger bridge problem as in \cite{L12}. Consider distributions $P, Q$ on the unit simplex. Let $R_\lambda$ denote the distribution of random variables $(X,Y)$ where $X \sim P$, and $Y$, given $X=x$, follows the conditional density $f_\lambda(\cdot \mid x)$ from \eqref{eqn:density.qs2}. The Schr\"odinger bridge is defined as the minimizer of the relative entropy $H(R \mid R_\lambda)$ where $R$ runs over all couplings of $(P, Q)$. The definition is meant to capture Schr\"odinger's original idea of the evolution of a particle system with kernel $f_\lambda$ with given initial and terminal configurations. However, if $Q$ is a discrete distribution, the relative entropy of any coupling with respect to $R_\lambda$ is infinite. Here we argue that the coupling \eqref{eqn:mixture.weight} corresponds to the bridge if we first condition on the terminal distribution.
	
	Imagine $N$ particles in $\Delta_n$ with initial configuration $P=\frac{1}{N}\sum_{i=1}^N \delta_{p(i)}$ and terminal configuration $Q=\frac{1}{N} \sum_{j=1}^N \delta_{q(j)}$. Assume, for simplicity, that all $(p(i), 1\le i\le N)$ and $(q(j), 1\le j \le N)$ are distinct. Label the $i$th particle to be the one that is initially at location $p(i)$. Let $\tilde{q}(j)$ denote independent random variables such that $\tilde{q}(j)$ follows the density $f_\lambda(\cdot \mid p(j))$ for each $j$. Let $R_N^\lambda$ denote the probability distribution of the vector $\left(\tilde{q}(j),\; 1\le j \le N \right)$. 
	
	Let $\mathcal{P}(\Delta_n)$ denote the metric space of probabilities on $\Delta_n$ equipped with the metric of weak convergence. Consider the map $T: \left( \Delta_n\right)^N \rightarrow \mathcal{P}(\Delta_n)$ such that $T\left((r(1), r(2), \ldots, r(N) )\right) = N^{-1} \sum_{i=1}^N \delta_N$. $T$ is a continuous map taking a vector to its empirical distribution. By \cite[Theorem 1]{Pollard}, $R_N^\lambda$ has a $(T, \mu_N^\lambda)$ disintegration (i.e., a regular conditional distribution given $T$) where $\mu_N^\lambda$ is the push-forward of $R_N^\lambda$ by the map $T$. It is a standard measure-theoretic verification (e.g., verify over rectangles) that, on the event $N^{-1} \sum_{i=1}^N \delta_{\tilde{q}(j)}=N^{-1} \sum_{i=1}^N \delta_{{q}(j)}$, this disintegration is explicitly by given by $M_N$ as in \eqref{eqn:M.N} and is unique $\mu_N^\lambda$ a.e. In probabilistic language, the regular conditional distribution of the vector $\left(\tilde{q}(j),\; 1\le j \le N \right)$, given its empirical distribution is $N^{-1} \sum_{i=1}^N \delta_{{q}(j)}$, is given by $M_N$. Thus $M_N$ can be defined to the Schr\"odinger bridge in this discrete setting. 
	Since, all ``Monge couplings'' can be represented by permutations where particle $i$ ends up at position $q(\sigma(i))$, for each $i$, for some $\sigma \in \mathcal{S}_N$, the Schr\"odinger bridge $M_N$ can be seen as a mixture of the Monge maps. Moreover, think of $1/\lambda$ as a `noise' parameter. As $\lambda \rightarrow \infty$, for fixed $N$, the noise reduces to zero and the transport becomes progressively closer to the optimal matching between the two $N$-samples. If we let $\lambda, N \rightarrow \infty$ suitably, one expects to recover the optimal Monge coupling for $(P_0, P_1)$ in the limit. This is shown in Theorem \ref{thm:static.transport} below.   
\end{rmk}

\subsection{Convergence to the optimal coupling} \label{sec:convergence}
Our objective is to prove that for an explicit sequence $\{\lambda_N\}_{N \geq 1}$ the sequence of probability measures $M_N$ converges weakly to the optimal coupling $R^*$ with respect to the cost function $c$, $\mathbb{P}$-almost surely. To do this we need some regularity assumptions on the optimal transport map.

Recall by Lemma \ref{lem:cost.asymmetry} that $c(p, q) = c(q^{-1}, p^{-1})$. In the proof it is more convenient to consider the transport from $q^{-1}$ to $p^{-1}$ rather than from $p$ to $q$. Given $P_0, P_1 \in \mathcal{L}_a$, let $\tilde{P}_0$ and $\tilde{P}_1$ be respectively the pushforwards of $P_0$ and $P_1$ under the maps $p \mapsto p^{-1}$ and $q \mapsto q^{-1}$. Since $P_0, P_1 \in \mathcal{L}_a$, so are $\tilde{P}_0$ and $\tilde{P}_1$. By Theorem \ref{thm:transport.solution}, there exists an exponentially concave function $\varphi$ on $\Delta_n$ such that if $\boldsymbol{\pi}$ is the portfolio map generated by $\varphi$, then the map
\begin{equation} \label{eqn:dual.transport}
q^{-1} \mapsto p^{-1} = T^*(q^{-1}) := q^{-1} \odot \boldsymbol{\pi}(q)
\end{equation}
pushforwards $\tilde{P}_1$ to $\tilde{P}_0$ and is the Monge solution (with respect to $c(q^{-1}, p^{-1})$ for the pair $(\tilde{P}_1, \tilde{P}_0)$. 

Consider the $L$-divergence ${\bf D}\left[ \cdot : \cdot \right]$ of $\varphi$ (see \eqref{eqn:L.divergence}). From \eqref{eqn:L.divergence.approx} we know that ${\bf D}\left[ \cdot : \cdot \right]$ is locally quadratic. For technical purposes we will assume that ${\bf D}\left[ \cdot : \cdot \right]$ is equivalent to the squared distance; this will allow us to apply known results about the convergence rates of a sample empirical distribution to the true distribution in the Wasserstein-2 distance in one step of the proof. We believe it is possible to weaken this assumption.

\begin{asmp} \label{ass:regularity.varphi}
	The function $\varphi$ is $C^2$ on $\Delta_n$, and there exist $\alpha, \alpha' > 0$ such that for all $q, q' \in \Delta_n$ we have
	\begin{equation} \label{eqn:L.divergence.bound}
	\alpha \|q' - q\|^2 \leq {\bf D}\left[ q' : q \right] \leq \alpha' \|q' - q\|^2.
	\end{equation}
\end{asmp}

From the lower bound in \eqref{eqn:L.divergence.bound} we have that the quadratic form $L(q)$ (see \eqref{eqn:matrix.L}) is strictly positive definite. Hence, by Lemma \ref{lem:diffeo} (which only uses $L(q) > 0$) the (dual) transport map $T^*$ in \eqref{eqn:dual.transport} is a $C^1$-diffeomorphism. Consider the map $T: \Delta_n \rightarrow \Delta_n$ defined by
\[
T(p) = \left( (T^*)^{-1}(p^{-1}) \right)^{-1}, \quad p \in \Delta_n.
\]
Since $c(p, q) = c(q^{-1}, p^{-1})$, the map $T$ is the Monge solution to the original problem for $(P_0, P_1)$.  (Note here $(T^*)^{-1}$ is the inverse of the map $T^*$ while the other supercript $-1$ refer to the group operation.)

Before stating the main result we give a set of sufficient conditions for Assumption \ref{ass:regularity.varphi} to hold.\footnote{ {As suggested by an anonymous referee, it would be nice to obtain sufficient conditions directly in terms of the distributions $P_0$ and $P_1$. This is an interesting problem (possibly related to analysis of the corresponding Monge-Amp\`{e}re equation studied in Section \ref{sec:Monge.Ampere}) on its own and is left for future research. On the other hand, once the function $\varphi$ is fixed, the transport map $T$ is optimal for any $P_0$ if we set $P_1 = T_{\#} P_0$.}} The proof is given in the Appendix.

\begin{lemma} \label{lem:suff.conditions}
	Suppose there exist constants $C_1, C_2, C_3 > 0$ such that $-\nabla^2 \varphi \leq C_1 I$, $-\nabla^2 e^{\varphi} \geq C_2 I$ and $\| \nabla e^{\varphi} \| \leq C_3$. Then there exists $\alpha, \alpha' > 0$ such that \eqref{eqn:L.divergence.bound} holds.
\end{lemma}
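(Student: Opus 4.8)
The plan is to pass to the positive concave function $\psi := e^{\varphi}$ and to rewrite both the $L$-divergence and the three hypotheses in terms of $\psi$. Throughout, the matrix inequalities are read as quadratic-form inequalities on the tangent space $\{v : v_1 + \cdots + v_n = 0\}$, consistent with the paper's convention. From $\nabla\psi = \psi\,\nabla\varphi$ and $\nabla^2\psi = \psi\bigl(\nabla^2\varphi + \nabla\varphi\otimes\nabla\varphi\bigr)$ one gets that the matrix $L$ of \eqref{eqn:matrix.L} is simply $L(r) = -\nabla^2\psi(r)/\psi(r)$, and substituting $\nabla\varphi(r') = \nabla\psi(r')/\psi(r')$ into \eqref{eqn:L.divergence} and telescoping the $\log\psi$ terms gives the clean identity
\[
{\bf D}\left[ r : r' \right] = \log\left( \frac{\psi(r') + \nabla\psi(r')\cdot(r - r')}{\psi(r)}\right), \quad r, r' \in \Delta_n .
\]
Thus for $r = q'$, $r' = q$ we have ${\bf D}[q':q] = \log(A/B)$, where $A := \psi(q) + \nabla\psi(q)\cdot(q'-q)$ is the value at $q'$ of the tangent affine approximation of $\psi$ at $q$ and $B := \psi(q')$. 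By Taylor's formula with integral remainder along the segment $[q,q']\subset\Delta_n$,
\[
A - B = \int_0^1 (1-t)\,(q'-q)^{\top}\bigl(-\nabla^2\psi(q + t(q'-q))\bigr)(q'-q)\,dt ,
\]
and since $q'-q$ is a tangent vector this is where the three hypotheses will enter.

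Next I would establish uniform two-sided control of $\psi$ on the whole open simplex: constants $0 < \psi_{\min} \le \psi_{\max} < \infty$ with $\psi_{\min} \le \psi \le \psi_{\max}$ on $\Delta_n$. The upper bound is immediate since $\norm{\nabla\psi} \le C_3$ makes $\psi$ Lipschitz on the bounded convex set $\Delta_n$. For the lower bound, $-\nabla^2\varphi \le C_1 I$ says exactly that $\varphi + \tfrac{C_1}{2}\norm{\cdot}^2$ is convex on $\Delta_n$; a convex function on a bounded convex set is bounded below by any of its supporting affine functions, so $\varphi$ is bounded below on $\Delta_n$ and hence $\psi = e^{\varphi} \ge \psi_{\min} > 0$. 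This step is short but it is where all three hypotheses genuinely enter and where one must control the behaviour of $\psi$ near $\partial\Delta_n$; I expect it to be the \textbf{main obstacle}. Granting it, the hypotheses yield the tangent-space inequalities $C_2 I \le -\nabla^2\psi = \psi L \le \psi\,C_1 I \le \psi_{\max} C_1 I$, using $0 \le L \le -\nabla^2\varphi \le C_1 I$ (the inequality $L \le -\nabla^2\varphi$ holds because $\nabla\varphi\otimes\nabla\varphi \ge 0$). Plugging these into the remainder formula gives
\[
\tfrac{C_2}{2}\,\norm{q'-q}^2 \;\le\; A - B \;\le\; \tfrac{C_1 \psi_{\max}}{2}\,\norm{q'-q}^2 ;
\]
in particular $A \ge B > 0$.

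Finally I would convert these estimates on $A-B$ into the claimed bounds on ${\bf D}[q':q] = \log\bigl(1 + (A-B)/B\bigr)$. For the upper bound, $\log(1+x)\le x$ together with $B \ge \psi_{\min}$ gives ${\bf D}[q':q] \le (A-B)/B \le \tfrac{C_1\psi_{\max}}{2\psi_{\min}}\norm{q'-q}^2$, so $\alpha' := \tfrac{C_1\psi_{\max}}{2\psi_{\min}}$ works. For the lower bound, set $x := (A-B)/B \ge 0$; since $\norm{q'-q}^2 \le 2$ on $\Delta_n$ we have $x \le M := \tfrac{C_1\psi_{\max}}{\psi_{\min}}$, and by concavity of $s\mapsto\log(1+s)$ the chord bound $\log(1+s) \ge \tfrac{\log(1+M)}{M}\,s$ holds for $s\in[0,M]$; combined with $x \ge \tfrac{C_2}{2\psi_{\max}}\norm{q'-q}^2$ this yields ${\bf D}[q':q] \ge \alpha\,\norm{q'-q}^2$ with $\alpha := \tfrac{C_2\log(1+M)}{2M\psi_{\max}} > 0$. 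All constants are explicit, so nothing further is needed; apart from the uniform control of $\psi$, the argument is just Taylor expansion plus the elementary inequalities $\log(1+x)\le x$ and the chord lower bound for $\log$.
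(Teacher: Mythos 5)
Your proposal is correct: the key identity ${\bf D}[q':q]=\log(A/B)$ with $A=e^{\varphi}(q)+\nabla e^{\varphi}(q)\cdot(q'-q)$ and $B=e^{\varphi}(q')$ is exactly the one the paper uses for its lower bound, and your Taylor-with-integral-remainder control of $A-B$ via $C_2 I\le-\nabla^2 e^{\varphi}\le \psi_{\max}C_1 I$ is sound. Where you diverge is in how the logarithm is linearized, and this forces you to prove an extra ingredient the paper never needs: a uniform positive lower bound $\psi_{\min}$ on $e^{\varphi}$ over all of $\Delta_n$. The paper gets the upper bound more cheaply by applying $\log(1+x)\le x$ directly to the definition \eqref{eqn:L.divergence}, reducing ${\bf D}$ to the Bregman divergence of $\varphi$ and invoking only $-\nabla^2\varphi\le C_1 I$, which yields $\alpha'=C_1$ with no information about $\inf e^{\varphi}$; and for the lower bound it writes ${\bf D}=-\log(B/A)\ge -(B-A)/A$, so that only an \emph{upper} bound on $A$ is needed, supplied by concavity of $e^{\varphi}$ (hence boundedness above) together with $\|\nabla e^{\varphi}\|\le C_3$. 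Your route instead normalizes by $B$ in both directions, so you need $B\ge\psi_{\min}>0$ and the chord inequality $\log(1+s)\ge\frac{\log(1+M)}{M}s$ on $[0,M]$. You correctly identified the lower bound on $\psi$ as the main obstacle and your argument for it is valid: $-\nabla^2\varphi\le C_1 I$ on tangent directions makes $\varphi+\frac{C_1}{2}\|\cdot\|^2$ convex on the convex set $\Delta_n$, hence bounded below by a supporting affine function, so $\varphi$ is bounded below and $e^{\varphi}\ge\psi_{\min}>0$. (Note that concavity and positivity of $e^{\varphi}$ alone would \emph{not} give this, since a positive concave function may vanish at the boundary, so this semiconvexity step is genuinely where the hypothesis $-\nabla^2\varphi\le C_1 I$ earns its keep in your version.) In short: both proofs work; the paper's is leaner because each of its two elementary log inequalities is applied in the direction that avoids bounding $e^{\varphi}$ away from zero, while yours is slightly longer but self-contained and produces fully explicit constants.
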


\begin{thm} \label{thm:static.transport}
	Let $P_0, P_1 \in \mathcal{L}_a$, and assume that the function $\varphi$ in \eqref{eqn:dual.transport} satisfies the conditions in Assumption \ref{ass:regularity.varphi}. Let $R^*$ be the optimal Monge coupling for $(P_0, P_1)$. For any $n \geq 2$, let $\lambda_N = \frac{4}{\alpha} N^{2/n}$. Then, $\mathbb{P}$-almost surely, we have
	\[
	\mathcal{W}_2^2(M_N, R^*) = O\left( N^{-1/n} \log N \right), \quad \text{as}\; N \rightarrow \infty.
	\]
\end{thm}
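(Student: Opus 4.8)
The plan is to control the Wasserstein distance $\mathcal{W}_2(M_N, R^*)$ by a triangle-type inequality that splits the error into three pieces: (i) the distance from $M_N$ to the \emph{optimal matching coupling} $M_N^{\sigma^*}$, where $\sigma^*$ is the permutation that optimally matches the two $N$-samples under the cost $c$; (ii) the distance from $M_N^{\sigma^*}$ to the coupling $(T_{\#})$ obtained by pushing the empirical measure $L_N(0)$ forward under the true Monge map $T$; and (iii) the distance between that pushforward coupling and $R^*$ itself. Piece (iii) is classical: since $R^* = (\mathrm{id}, T)_{\#} P_0$ and $T$ is Lipschitz (being a $C^1$-diffeomorphism of the compact-closure simplex under Assumption \ref{ass:regularity.varphi}), this is controlled by the rate of convergence of $L_N(0)$ to $P_0$ in $\mathcal{W}_2$, which by Fournier--Guillin (or the standard empirical-measure bounds) is $O(N^{-1/n})$ a.s.\ up to logarithmic factors, in dimension $n-1 \geq 1$; the exponent $2/n$ in $\lambda_N$ is precisely tuned to this rate. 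Piece (ii) compares the optimal \emph{discrete} matching with the continuous optimal map; because of the intermediate/stability structure (and again using that $\mathbf{D}$ is equivalent to squared distance via Assumption \ref{ass:regularity.varphi}), the optimal matching of the samples is close to $T$ evaluated at the sample points, with the same $O(N^{-1/n})$ order.

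The heart of the argument is piece (i): showing that the Schr\"odinger mixture $M_N = \sum_\sigma \nu_N^\sigma M_N^\sigma$ concentrates, as $\lambda_N \to \infty$, on the single optimal permutation $\sigma^*$. The key observation is that the mixture weights have the Gibbs form
\[
\nu_N^\sigma = \frac{\exp\!\big(\sum_{i=1}^N \log f_{\lambda_N}(q(\sigma(i)) \mid p(i))\big)}{\sum_{\rho} \exp\!\big(\sum_{i=1}^N \log f_{\lambda_N}(q(\rho(i)) \mid p(i))\big)},
\]
and by Lemma \ref{lem:Dirichlet.transport.density2}, $-\frac{1}{\lambda_N}\log f_{\lambda_N}(q \mid p) \to c(p,q)$ locally uniformly. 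Hence $\sum_i \log f_{\lambda_N}(q(\sigma(i)) \mid p(i)) \approx -\lambda_N \sum_i c(p(i), q(\sigma(i)))$ up to lower-order terms, so the weight ratio between a suboptimal $\sigma$ and the optimal $\sigma^*$ is roughly $\exp(-\lambda_N \Delta_\sigma)$ where $\Delta_\sigma = \sum_i c(p(i),q(\sigma(i))) - \sum_i c(p(i),q(\sigma^*(i))) > 0$ is the matching gap. The subtlety is that $\Delta_\sigma$ can be arbitrarily small (two nearby permutations), and there are $N!$ of them, so one must be quantitative: I would bound $\mathcal{W}_2^2(M_N, M_N^{\sigma^*}) \leq \sum_{\sigma \neq \sigma^*} \nu_N^\sigma \cdot \mathcal{W}_2^2(M_N^{\sigma^*}, M_N^\sigma)$, note $\mathcal{W}_2^2(M_N^{\sigma^*}, M_N^\sigma) = O(1)$ uniformly (diameter of $\overline\Delta_n$), and then show $\sum_{\sigma \neq \sigma^*} \nu_N^\sigma$ is exponentially small. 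This last estimate uses the local uniform convergence in \eqref{eqn:density.LDP} to get, for $N$ large, $\log f_{\lambda_N}(q \mid p) \leq -\lambda_N c(p,q) + \lambda_N \epsilon_N$ with $\epsilon_N \to 0$ (plus the additive normalizing constant that cancels in the ratio), combined with the $c$-cyclical monotonicity structure of $\sigma^*$ which guarantees $\Delta_\sigma \gtrsim \alpha \sum_i \|q(\sigma(i)) - q(\sigma^*(i))\|^2$ — a strictly positive, discrete-geometry quantity; summing $\exp(-c\lambda_N \|\cdot\|^2)$ over all permutations via a union bound (or a more careful combinatorial count) and using $\lambda_N \gg N^{2/n} \gg \log(N!)/N^{2/n}$ makes the total subleading.

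\textbf{The main obstacle} I expect is precisely controlling $\sum_{\sigma \neq \sigma^*}\nu_N^\sigma$: one needs the exponential suppression $\exp(-\lambda_N \Delta_\sigma)$ to beat the combinatorial factor $N!$ \emph{uniformly}, and $\Delta_\sigma$ degenerates for permutations differing by a transposition of nearly-coincident points. The resolution is to not treat permutations individually but to bound the \emph{expected transport discrepancy} $\sum_\sigma \nu_N^\sigma \sum_i \|q(\sigma(i)) - q(\sigma^*(i))\|^2$ directly: write it as a ratio of sums over $\mathcal{S}_N$, and use a Laplace-type / Varadhan argument showing that the logarithmic moment generating function of the Gibbs measure over permutations, at inverse temperature $\lambda_N$, has derivative $o(1)$ — equivalently, that the "free energy" $\frac{1}{\lambda_N}\log \sum_\rho \exp(-\lambda_N \sum_i c(p(i),q(\rho(i))))$ converges to $-\min_\sigma \sum_i c(p(i),q(\sigma(i)))$ with an error $O(\frac{\log N!}{\lambda_N}) = O(N^{-2/n}\, N\log N / N) \to 0$ under $\lambda_N = \frac{4}{\alpha}N^{2/n}$ — wait, one must check $N\log N / \lambda_N = \frac{\alpha}{4}N^{1-2/n}\log N$, which does \emph{not} vanish for $n \geq 2$; so the count must be sharper than a naive $\log N!$, exploiting that only permutations with $\sum_i\|q(\sigma(i))-q(\sigma^*(i))\|^2 \leq \delta$ contribute and these number far fewer than $N!$. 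This refined entropy-vs-energy balance, yielding the stated rate $O(N^{-1/n}\log N)$, is where the real work lies; everything else is assembling the triangle inequality and quoting standard empirical-measure and Monge-stability estimates.
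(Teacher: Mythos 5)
Your proposal correctly identifies the central difficulty --- beating the $N!$ combinatorial factor when the gap $\Delta_\sigma$ degenerates for permutations that transpose nearby points --- but it does not resolve it, and the route you choose (concentration of the Gibbs mixture on the single $c$-optimal permutation $\sigma^*$) is the wrong decomposition. As you yourself compute, $\log N!/\lambda_N \asymp N^{1-2/n}\log N$ does not vanish for $n\ge 2$, and the "refined entropy-vs-energy balance" you defer to is precisely the missing proof. The paper's resolution is structurally different: it never tries to isolate one permutation. The comparison object is $M_N' = \frac1N\sum_j\delta_{(p(j),\hat q(j))}$ with $\hat q(j)=T(p(j))$, an i.i.d.\ sample from $R^*$, and the permutations are split into a \emph{set} $\mathcal{G}_N$ of "good" ones --- those for which at most a fraction $\log N/N^{1/n}$ of the indices satisfy $\|q(\sigma(j))-\hat q(j)\|>\delta_N$ --- and its complement. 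Every good $\sigma$ automatically gives $\mathcal{W}_2^2(M_N^\sigma,M_N')\le \delta_N^2 + 2\log N/N^{1/n}$, so nearly-degenerate permutations need no suppression at all; only the bad ones do, and for those the energy $\sum_j\|q(\sigma(j))-\hat q(j)\|^2$ is bounded below by $N\delta_N^2\log N/N^{1/n}$ \emph{by definition of the partition}, so that with $\delta_N^2=c_1N^{-1/n}$ and $\lambda_N=\frac4\alpha N^{2/n}$ the exponent $\alpha\lambda_N\cdot N\delta_N^2\log N/N^{1/n}\asymp N\log N$ does beat $\log N!\le N\log N$. The reference point in the denominator of the weights is handled by relabeling the $q(j)$'s so that the identity permutation realizes the $L^2$-optimal matching $W_N$ between $\{q(j)\}$ and $\{\hat q(j)\}$, and $W_N\lesssim(\log N/N)^{2/n}$ by Fournier--Guillin applied to two independent samples from $P_1$ --- so $\alpha'\lambda_N NW_N = o(N\log N)$ is harmless.

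Two further points. First, your piece (ii) --- quantitative closeness of the discrete $c$-optimal matching to $T$ at the sample points --- is a nontrivial stability statement that you would also have to prove; the paper avoids it entirely by comparing against $T(p(j))$ directly. Second, the mechanism by which Assumption \ref{ass:regularity.varphi} enters is not a generic "$\mathbf{D}$ equivalent to squared distance" bound on $\Delta_\sigma$: the precise identity \eqref{eqn:q.tilde.estimate} shows that $\log f_\lambda(\tilde q\mid p)-\log f_\lambda(\hat q\mid p)$ equals $-\lambda(\mathbf{D}[\tilde q:\hat q]+\varphi(\tilde q)-\varphi(\hat q))$ plus a $\sigma$-independent constant, and the $\varphi$-increments telescope over any permutation, leaving exactly the quadratic terms $\pm\lambda\sum_j\|\cdot\|^2$. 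Without this exact cancellation your heuristic "$\approx -\lambda_N\sum_i c(p(i),q(\sigma(i)))$ up to lower-order terms" would accumulate an error of size $N\cdot o(\lambda_N)$, which is not obviously negligible against the gap.
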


\begin{proof}
	Since the proof is long we will divide it into several steps.
	
	\medskip
	
	{\it Step 1.} Recall that $T$ is the optimal transport map from $P_0$ to $P_1$ and $T^* $ is the dual transport map from $\tilde{P}_1$ to $\tilde{P}_0$. For each particle $p(j)$, let $\hat{q}(j) = T(p(j)) \in \Delta_n$ be the image of $p(j)$ under $T$. For notational simplicity, let us denote $\pi(j) = \boldsymbol{\pi}(\hat{q}(j))$.
	
	Note that \eqref{eqn:dual.transport} implies that $\pi(j) = \hat{q}(j) \odot p(j)^{-1}$. Using this identity, we observe that for any $\tilde{q}(j) \in \Delta_n$ we have
	\begin{equation} \label{eqn:q.tilde.estimate}
	\begin{split}
	& \log \left( \frac{1}{n} \sum_{i = 1}^n \frac{\tilde{q}_i(j)}{p_i(j)}\right) - \log \left( \frac{1}{n} \sum_{i = 1}^n \frac{\hat{q}_i(j)}{p_i(j)}\right) \\
	&= \log \left( \sum_{i = 1}^n \pi_i(j)  \frac{\tilde{q}_i(j)}{\hat{q}_i(j)} \right) \\
	&= {\bf D}\left[ \tilde{q}(j) : \hat{q}(j) \right] + (\varphi(\tilde{q}(j)) - \varphi(\hat{q}(j)) \\
	&\geq \alpha \|\tilde{q}(j) - \hat{q}(j) \|^2 + (\varphi(\tilde{q}(j)) - \varphi(\hat{q}(j)).
	\end{split}
	\end{equation}
	In the above computation, the second equality follows from \eqref{eqn:L.divergence.portfolio} and the last  one follows form \eqref{eqn:L.divergence.bound}.
	
	For each $N$, let $M_N'$ denote the (random) probability measure
	\[
	M_N' = \frac{1}{N} \sum_{j = 1}^N \delta_{(p(j), \hat{q}(j))}.
	\]
	Since the $p(j) \in \Delta_n$'s are i.i.d.~samples from $P_0$ and $\hat{q}(j) = T(p(j)) \in \Delta_n$, $M_N'$ is the empirical measure of $N$ i.i.d.~samples from the optimal coupling $R^*$. Thus it is natural to expect that $\mathcal{W}_2(M_N', R^*) \rightarrow 0$ as $N \rightarrow \infty$. The convergence will be quantified below, and we will show the same for $M_N$ by comparing it with $M_N'$.
	
	\medskip
	
	{\it Step 2.} Fix $N \geq 1$, the number of particles. Since empirical measures do not depend on the labeling of indices, we will relabel $\{q(j), j \in [N]\}$ (that were sampled independently of $\{p(j), j \in [N]\}$) such that the $L^2$-matching distance between the two samples is minimized:
	\begin{equation} \label{eqn:L2.matching}
	\frac{1}{N} \sum_{j = 1}^N \|q(j) - \hat{q}(j)\|^2 = \min_{\sigma \in \mathcal{S}_N} \frac{1}{N} \sum_{j = 1}^N \| q(\sigma(j)) - \hat{q}(j)\|^2 =: W_N \ \text{(say)}.
	\end{equation}
	That is, after the relabeling, the identity permutation $\iota$ attains the minimum in $W_N = \mathcal{W}_2^2(L_N(1), L_N'(1))$.
	
	For $\lambda > 0$ fixed, from the explicit formula of the density $f_{\lambda}$ from \eqref{eqn:density.qs2} and the estimate \eqref{eqn:q.tilde.estimate}, for any $\sigma \neq \iota$ we have
	\begin{equation*}
	\begin{split}
	& \frac{\prod_{j = 1}^N f_{\lambda}(q(\sigma(j)) \mid p(j))}{\prod_{j = 1}^N f_{\lambda}(\hat{q}(j) \mid p(j))} \\
	&= C(q, \hat{q}) \prod_{j = 1}^N \exp \left[ -\lambda \log \left( \sum_{i = 1}^n \frac{q_i(\sigma(j))}{p_i(j)}\right) + \lambda \log \left( \sum_{i = 1}^n \frac{\hat{q}_i(j)}{p_i(j)}\right) \right], 
	\end{split}
	\end{equation*}
	where the constant
	\[
	C(q, \hat{q}):=\left( \prod_{j=1}^N \prod_{i=1}^n \frac{ q_i(j)}{\hat{q}_i(j) } \right)^{\frac{\lambda}{n}-1}
	\]
	does not depend on the permutation $\sigma$. Hence, this term will get canceled from the numerator and the denominator of $\nu_N^\sigma$ in \eqref{eqn:mixture.weight}. Nevertheless, from Step 1 we have
	\begin{equation*}
	\begin{split}
	& \frac{\prod_{j = 1}^N f_{\lambda}(q(\sigma(j)) \mid p(j))}{\prod_{j = 1}^N f_{\lambda}(\hat{q}(j) \mid p(j))} \\
	&\leq C(q, \hat{q})\prod_{j = 1}^N \exp\left[ -\alpha \lambda \| q(\sigma(j))  - \hat{q}(j) \|^2 + \varphi(q(\sigma(j))) - \varphi(\hat{q}(j))\right] \\
	&= C(q, \hat{q}) \exp \left[  \lambda \sum_{j = 1}^N (\varphi(q(j)) - \varphi(\hat{q}(j))) \right] \exp \left[ -\alpha \lambda \sum_{j = 1}^N \| q(\sigma(j)) - \hat{q}(j) \|^2 \right].
	\end{split}
	\end{equation*}
	
	On the other hand, by a similar argument, we can get a lower bound for $\sigma = \iota$:
	\begin{equation*}
	\begin{split}
	& \frac{\prod_{j = 1}^N f_\lambda(q(j)) \mid p(j))}{\prod_{j = 1}^N f_\lambda(\hat{q}(j) \mid p(j))}\\
	&\geq C(q, \hat{q}) \exp \left[  \lambda \sum_{j = 1}^N (\varphi(q(j)) - \varphi(\hat{q}(j))) \right] \exp \left[ -\alpha' \lambda \sum_{j = 1}^N \| q(j) - \hat{q}(j) \|^2  \right] \\
	&= C(q, \hat{q}) \exp \left[  \lambda \sum_{j = 1}^N (\varphi(q(j)) - \varphi(\hat{q}(j))) \right] \exp \left[ -\alpha' \lambda N W_N \right],
	\end{split}
	\end{equation*}
	where $W_N$ is given by \eqref{eqn:L2.matching}. In particular, for any $\sigma \in \mathcal{S}_N$ with $\sigma \neq \iota$ we have the estimate
	\begin{equation} \label{eqn:density.ratio.estimate}
	\begin{split}
	\frac{\prod_{j = 1}^N f_{\lambda}(q(\sigma(j)) \mid p(j))}{\prod_{j = 1}^N f_{\lambda}(q(j) \mid p(j))} &= \frac{\prod_{j = 1}^N f_{\lambda}(q(\sigma(j)) \mid p(j))}{\prod_{j = 1}^N f_{\lambda}(\hat{q}(j) \mid p(j))} \frac{\prod_{j = 1}^N f_{\lambda}(\hat{q}(j)) \mid p(j))}{\prod_{j = 1}^N f_{\lambda}(q(j) \mid p(j))} \\
	&\leq \exp \left[ \alpha' \lambda NW_N - \alpha \lambda \sum_{j = 1}^N \| q(\sigma(j)) - \hat{q}(j) \|^2 \right].
	\end{split}
	\end{equation}
	
	\medskip
	
	{\it Step 3.} Let $\delta_N > 0$ be a sequence, to be chosen later, that converges to $0$ as $N \rightarrow \infty$.
	
	Partition $\mathcal{S}_N$ into two disjoint subsets:
	\begin{equation*}
	\begin{split}
	\mathcal{G}_N &:= \left\{ \sigma \in \mathcal{S}_N : \frac{1}{N} \sum_{j = 1}^N 1\{ \|q(\sigma(j)) - \hat{q}(j)\| > \delta_N \} \leq \frac{\log N}{N^{1/n}}\right\},\\
	  \mathcal{G}_N^c &= \mathcal{S}_N \setminus \mathcal{G}_N.
	\end{split}
	\end{equation*}
	
	Consider $\sigma \in \mathcal{G}_N$ and the probability measures $M_N^{\sigma}$ and $M_N'$ on $\Delta_n \times \Delta_n$. There is a coupling between them that couples the atom $(p(j), q(\sigma(j)))$ of $M_N^{\sigma}$, with the atom $(p(j), \hat{q}(j))$ of $M_N'$, with mass $1/N$. The squared Euclidean distance (in $\mathbb{R}^{2n}$) between these two atoms is exactly $\|q(\sigma(j)) - \hat{q}(j)\|^2$. For each $1 \leq j \leq N$, either $\|q(\sigma(j)) - \hat{q}(j)\| \leq \delta_N$, or $\|q(\sigma(j)) - \hat{q}(j) \| \leq \sqrt{2}$ (diameter of $\Delta_n$). Since $\sigma \in \mathcal{G}_N$, there is only a vanishing fraction of indices that do not satisfy the former bound. Hence, for all $\sigma \in \mathcal{G}_N$ we have
	\[
	\mathcal{W}_2^2 (M_N^{\sigma}, M_N') \leq \delta_N^2 + \frac{2\log N}{N^{1/n}} \rightarrow 0.
	\]
	For $\sigma \notin \mathcal{G}_N$ we have the trivial bound $\mathcal{W}_2(M_N^{\sigma}, M_N') \leq \sqrt{2}$ given by the diameter of the simplex. Since $M_N$ is the mixture of $\{M_N^{\sigma}\}$ with weights $\{\nu_N^{\sigma}\}$, the natural mixture coupling gives
	\begin{equation} \label{eqn:bound.Wasserstein}
	\mathcal{W}_2^2 (M_N, M_N') \leq \delta_N^2 + \frac{2\log N}{N^{1/n}} + 2\sum_{\sigma \in \mathcal{G}_N^c} \nu_N^{\sigma}.
	\end{equation}
	
	Hence, in order to show that $\mathcal{W}_2^2 (M_N, M_N') \rightarrow 0$, it suffices to show that $\sum_{\sigma \in \mathcal{G}_N^c} \nu_N^{\sigma}$ tends to $0$ as $N \rightarrow \infty$.
	
	To this end, note that from \eqref{eqn:mixture.weight}, we have
	\begin{equation} \label{eqn:mixture.weight.estimate}
	\begin{split}
	\sum_{\sigma \in \mathcal{G}_N^c} \nu_N^{\sigma} &= \frac{\sum_{\sigma \in \mathcal{G}_N^c} \prod_{j = 1}^N f_{\lambda} (q(\sigma(j)) \mid p(j))}{\prod_{j = 1}^N f_{\lambda}(q(j) | p(j)) + \sum_{\sigma \neq \iota} \prod_{j = 1}^N f_{\lambda}(q(\sigma(j)) \mid p(j))} \\
	&\leq \sum_{\sigma \in \mathcal{G}_N^c} \frac{\prod_{j = 1}^N f_{\lambda}(q(\sigma(j)) \mid p(j))}{\prod_{j = 1}^N f_{\lambda}(q(j) \mid p(j))} \\
	&\leq \sum_{\sigma \in \mathcal{G}_N^c} \exp\left[ \alpha' \lambda N W_N - \alpha \lambda \sum_{j = 1}^N \| q(\sigma(j)) - \hat{q}(j) \|^2 \right], \quad \text{by \eqref{eqn:density.ratio.estimate}}\\
	&\leq N! \exp\left[ \alpha' \lambda NW_N - \alpha \lambda \frac{N \delta_N^2 \log N}{N^{1/n}} \right].
	\end{split}
	\end{equation}
	The last inequality uses the crude estimate $|\mathcal{G}_N^c| \leq |\mathcal{S}_N| = N!$ as well as the fact that 
	\[
	\sum_{j = 1}^N \| q(\sigma(j)) - \hat{q}(j) \|^2 \geq \frac{N \delta_N^2 \log N}{N^{1/n}}
	\]
	for $\sigma \in \mathcal{G}_N^c$.
	
	\medskip
	
	{\it Step 4.} We now let $\lambda = \lambda_N$ depend on $N$. By the trivial bound $N!\le N^N$, we can bound \eqref{eqn:mixture.weight.estimate} above by
	\begin{equation} \label{eqn:mixture.weight.estimate2}
	C_0 \exp \left[ \alpha' \lambda_N N W_N - \alpha \lambda_N N \frac{\delta_N^2 \log N}{N^{1/n}} + N \log N \right],
	\end{equation}
	where $C_0 > 0$ is a constant. We will choose $\lambda_N$ suitably such that the sum in \eqref{eqn:mixture.weight.estimate} tends to zero exponentially fast as $N \rightarrow \infty$.
	
	Note that $\{q(j),  j \in [N]\}$ and $\{\hat{q}(j),  j \in [N]\}$ are two independent collection of i.i.d.~random vectors sampled from $P_1$ (modulo the relabeling in Step 2 which is irrelevant). Let $V_N = \mathcal{W}_2^2 (L_N(1), P_1)$ and $U_N = \mathcal{W}_2^2 (L_N'(1), P_1)$. By the triangle inequality, we have that
	\begin{equation} \label{eqn:estimate.WN}
	W_N \leq 2(U_N + V_N).
	\end{equation}
	
	{To bound the right hand side of \eqref{eqn:estimate.WN} we apply some known results} on the rate of convergence of $V_N$ (and hence $U_N$). In particular, we will apply Theorem 2 of the paper \cite{FG15} Fournier and Guillin. Since $P_1$ is supported in $\Delta_n$, it has compact support in $\mathbb{R}^n$ and all exponential moments exist. Hence Assumption (1) in their Theorem 2 is satisfied for $p=2$. Consider the function $a(N,x)$, for $0< x < 1$, from that result (replacing their $d$ by $n$): 
	\[
	a(N,x)=C\begin{dcases}
	&\exp\left( -c N x^2\right), \quad \text{if $n < 4$},\\
	&\exp\left( -c N (x/ \log(2+ 1/x))^2\right), \quad \text{if $n=4$},\\
	&\exp\left( - c N x^{n/2}\right), \quad \text{if $n > 4$}.
	\end{dcases}
	\]
	Fix $x>0$ and all large $N$ such that $x/N< 1$. Then 
	\[ 
	a\left(N, (x/N)^{2/n} \right)=C\begin{dcases}
	& \exp\left( - c N^{1-4/n} x^{4/n} \right),\quad \text{if $n < 4$},\\
	& \exp\left( - c x/(\log(2+ (N/x)^{1/2}))^2  \right), \quad \text{if $n=4$},\\
	& \exp\left(- c x \right), \quad \text{if $n > 4$}. 
	\end{dcases}
	\]
	The function $b(n,x)=0$ in \cite[Theorem 2]{FG15} for $x > 1$ under our assumption. 
	
	Then, there exist some positive constants $C, c$ depending on $n, P_1$ such that for all $x>0$ and all large enough $N$ with $x/N< 1$, we have
	\[
	\begin{split}
	P\left( N^{2/n} V_N \ge  x^{2/n} \right) &\le C \exp\left( - c x\right).\\
	\end{split}
	\]
	Replacing $x$ by $\frac{2}{c}\log N$, there exists a constant $c_0$ such that for all large enough $N$, 
	\[
	\begin{split}
	P\left( N^{2/n} V_N \ge  c_0 (\log N)^{2/n} \right) &\le C N^{-2}.
	\end{split}
	\]
	In particular, by the Borel-Cantelli lemma, almost surely, for all large enough $N$, we have
	\eq\label{eq:boundvn}
	V_N \le c_0\left( \frac{\log N}{N}\right)^{2/n}.
	\en 
	Of course, exactly the same statements hold for $U_N$, and hence for $W_N$, by \eqref{eqn:estimate.WN}, perhaps for a different choice of constants. 
	
	\medskip
	
	{\it Step 5.} {Fix $n \geq 2$.} Now choose $\delta^2_N= c_1 N^{-1/n}$, for some large enough constant $c_1>0$. Given any $c_0 >0$, by choosing $c_1$ suitably we can guarantee that, for all $n\ge 2$, 
	\[
	N^{-1/n}\delta_N^2\log N=c_1 N^{-2/n}\log N > c_0 \left( \frac{\log N}{N}\right)^{2/n},
	\]
	for all large enough $N$. In fact, by choosing $c_1$ large enough, almost surely, for all large enough $N$, we can guarantee
	\begin{equation*}
	\begin{split}
	\lambda_N N\left( \frac{\delta_N^2\log N}{N^{1/n}} - \frac{\alpha'}{\alpha}W_N \right) 
	&\ge \lambda_N N\left( c_1 \frac{\log N}{N^{2/n}} - \frac{c_0 \alpha'}{\alpha} \left( \frac{\log N}{N}\right)^{2/n} \right)\\ &\ge \frac{1}{2} \lambda_N N^{1-2/n} \log N.
	\end{split}
	\end{equation*}
	Now let $\lambda_N=\frac{4}{\alpha}N^{2/n}$. For all large enough $N$, we have
	\eq\label{eq:nexpbnd}
	\alpha \lambda_N N \left( \frac{\delta_N^2\log N}{N^{1/n}} - \frac{\alpha'}{\alpha}W_N \right) \ge  2 N \log N.
	\en
	Therefore, from \eqref{eqn:mixture.weight.estimate2}, for all large enough $N$, 
	\eq\label{eq:nexpbnd2}
	\sum_{\sigma \notin \mathcal{G}_N} \nu_N^\sigma \le C_0 \exp\left( - N\log N\right).
	\en

	Combining everything, from \eqref{eqn:bound.Wasserstein}, almost surely, for all large enough $N$, 
	\[
	\mathcal{W}^2_2(M_N, M_N') \leq \frac{c_1+2\log N}{N^{1/n}} + C_0 e^{-N \log N} \leq \frac{3\log N}{N^{1/n}}  , \quad \text{for $n\geq 2$}.
	\]
	
	Since $M_N'$ corresponds to a random sample from the optimal Monge solution, $\mathcal{W}^2_2\left( M_N', R^*\right)$ in the Wasserstein-$2$ metric also satisfies \eqref{eq:boundvn}, perhaps with a different choice of constants. Note that $R^*$ is compactly supported and therefore has all finite exponential moments. Combining this with our last bound, using triangle inequality, and ignoring lower order terms gives us the statement of the theorem. 
\end{proof}

\subsection{Dynamic extension} \label{sec:dynamic}
We now extend the previous static result to a dynamic setting. Let $\mathcal{M}_1(0, 1]$ be the collection of Borel probability measures on $(0, 1]$ equipped with the L\'{e}vy metric of weak convergence. We may regard it as the subset of $\mathcal{M}_1[0, 1]$ of all probability measures that do not charge the singleton $\{0\}$.

Fix $n \geq 2$ and define the subintervals
\[
E_i = \left( \frac{i-1}{n}, \frac{i}{n}\right], \quad i = 1, \ldots, n.
\]
Then there is a natural projection map from $\mathcal{M}_1(0, 1]$ to $\overline{\Delta}_n$ given by
\begin{equation} \label{eqn:measure.projection}
\mu \in \mathcal{M}_1(0, 1] \mapsto (\mu(E_i))_{1 \leq i \leq n} \in \overline{\Delta}_n.
\end{equation}

Let $\mathrm{Leb}$ denote the uniform (Lebesgue) measure on $(0, 1]$. Consider the relative entropy $I(\mu) := H\left( \mathrm{Leb} \mid \mu\right)$ of the Lebesgue measure with respect to $\mu \in \mathcal{M}_1(0, 1]$. Our first observation is the following.

\begin{lemma} \label{lem:entropy.restriction}
	Given $\pi \in \Delta_n$, let $\mathcal{M}_{\pi} \subset \mathcal{M}_1(0, 1]$ denote the collection of $\mu$ such that $\mu(E_i) = \pi_i$ for $1 \leq i \leq n$. Then
	\[
	\inf_{\mu \in \mathcal{M}_{\pi}} H\left( \mathrm{Leb} \mid \mu\right) = H\left( \overline{e} \mid \pi\right),
	\]
	where the right hand side is the discrete relative entropy defined by \eqref{eqn:relative.entropy}.
\end{lemma}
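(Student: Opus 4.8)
The plan is to compute the infimum by a direct variational argument, exploiting the fact that relative entropy "tensorizes" over the partition $\{E_i\}$. First I would write, for any $\mu \in \mathcal{M}_\pi$ that is absolutely continuous with respect to $\mathrm{Leb}$ with density... actually it is cleaner to work with $H(\mathrm{Leb}\mid\mu)$ directly: if $\mathrm{Leb}$ is not absolutely continuous w.r.t.\ $\mu$ the entropy is $+\infty$, so we may assume $\mu$ has a strictly positive density $g$ on $(0,1]$ and write $H(\mathrm{Leb}\mid\mu) = \int_0^1 \log\frac{1}{g(x)}\,dx = -\int_0^1 \log g(x)\,dx$. Splitting the integral over the pieces $E_i$ and using that $\int_{E_i} g = \pi_i$, the constraint is that the "mass" of $g$ on each $E_i$ is fixed. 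On each $E_i$ (which has Lebesgue length $1/n$), by Jensen's inequality applied to the concave function $\log$,
\[
-\int_{E_i}\log g(x)\,dx \;\geq\; -\frac{1}{n}\log\!\left(n\int_{E_i} g(x)\,dx\right) \;=\; -\frac{1}{n}\log(n\pi_i) \;=\; \frac{1}{n}\log\frac{1}{n\pi_i},
\]
where the factor $n$ comes from normalizing $\mathrm{Leb}$ restricted to $E_i$ to a probability measure. Summing over $i$ gives
\[
H(\mathrm{Leb}\mid\mu) \;\geq\; \sum_{i=1}^n \frac{1}{n}\log\frac{1}{n\pi_i} \;=\; -\frac{1}{n}\sum_{i=1}^n\log(n\pi_i) \;=\; \sum_{i=1}^n \frac{1}{n}\log\frac{1/n}{\pi_i} \;=\; H(\overline{e}\mid\pi).
\]

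For the matching upper bound I would exhibit the minimizer explicitly: take $\mu$ to be the measure whose density is constant equal to $n\pi_i$ on each $E_i$ (so $\mu(E_i) = (n\pi_i)\cdot\frac{1}{n} = \pi_i$, confirming $\mu\in\mathcal{M}_\pi$, and $\mu$ is a genuine probability measure on $(0,1]$). For this choice the Jensen inequality above is an equality on each $E_i$, so $H(\mathrm{Leb}\mid\mu) = H(\overline{e}\mid\pi)$, and the infimum is attained. One small point to check: if some $\pi_i = 0$ then $H(\overline{e}\mid\pi) = +\infty$ (since $\overline{e}$ is not absolutely continuous w.r.t.\ such $\pi$) and correspondingly no $\mu\in\mathcal{M}_\pi$ can make $\mathrm{Leb}$ absolutely continuous w.r.t.\ $\mu$ on $E_i$ while keeping $\mu(E_i)=0$, so both sides are $+\infty$; but in the statement $\pi\in\Delta_n$ has all coordinates strictly positive, so this degenerate case does not even arise.

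There is no serious obstacle here; the only thing requiring a moment of care is the reduction to the absolutely continuous case (handling the possibility that $\mathrm{Leb}\not\ll\mu$, which forces $H(\mathrm{Leb}\mid\mu)=+\infty$ and hence does not affect the infimum) and keeping the normalization constants straight when restricting $\mathrm{Leb}$ and $\mu$ to the intervals $E_i$ of length $1/n$. The computation $-\frac{1}{n}\sum_i\log(n\pi_i) = \sum_i \frac{1}{n}\log\frac{1/n}{\pi_i} = H(\overline{e}\mid\pi)$ is just bookkeeping with the definition \eqref{eqn:relative.entropy} of discrete relative entropy. Thus the whole proof is: (i) dismiss non-absolutely-continuous $\mu$; (ii) apply Jensen on each $E_i$ for the lower bound; (iii) exhibit the piecewise-constant $\mu$ for the upper bound.
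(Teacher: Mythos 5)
Your argument is correct, and it arrives at the same minimizer as the paper: the measure that is uniform on each $E_i$ with mass $\pi_i$. The route differs in how the lower bound is obtained. The paper simply cites the information monotonicity (data processing) property of relative entropy applied to the projection $\mu \mapsto (\mu(E_i))_i$, together with tensorization for the equality case, while you reprove that inequality from scratch by Jensen's inequality on each subinterval. Your version is more self-contained and makes the equality case transparent; the paper's version is shorter and emphasizes the structural reason (coarsening can only decrease relative entropy), which is the viewpoint reused in the dynamic section. One small imprecision in your write-up: from $\mathrm{Leb}\ll\mu$ you cannot conclude that $\mu$ itself has a density; $\mu$ may carry a singular part. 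What is true is that the density $g$ of the absolutely continuous part of $\mu$ must be positive Lebesgue-a.e.\ and $\frac{d\,\mathrm{Leb}}{d\mu}=1/g$ Lebesgue-a.e., so $H(\mathrm{Leb}\mid\mu)=-\int_0^1\log g$ still holds; but then $\int_{E_i} g\,dx\le \pi_i$ rather than $=\pi_i$. Since $\log$ is increasing, your Jensen step gives $-\int_{E_i}\log g\ \ge\ -\tfrac{1}{n}\log\bigl(n\int_{E_i}g\,dx\bigr)\ \ge\ -\tfrac{1}{n}\log(n\pi_i)$, so the bound survives unchanged; just state it this way rather than assuming $\mu\ll\mathrm{Leb}$.
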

\begin{proof}
	Let $\mu \in \mathcal{M}_{\pi}$. Then the projection of $\mu$ under \eqref{eqn:measure.projection} is $\pi$, and the projection of $\mathrm{Leb}$ is $\overline{e}$. By the information monotonicity of the relative entropy, we have $H\left( \overline{e} \mid \pi\right) \leq H\left( \mathrm{Leb} \mid \mu\right)$. By tensorization, the equality is achieved (uniquely) by $\mu^* \in \mathcal{M}_{\pi}$ such that $\mu^*$ is uniform when restricted to each of the subintervals $E_i$.
\end{proof}

We now define a Lagrangian action on functions on $\Delta_n$ which is consistent with the cost $c$. Let $\mathcal{I}_n$ denote the set of all functions $f = (f_1, \ldots, f_n) : [0, 1] \rightarrow [0, 1]^n$ such that each $f_i$ is right-continuous, strictly increasing, $f_i(0) = 0$, and $f(1) \in \Delta_n$. Each $f_i$ can be thought of as the distribution function of a measure $\mu_i$ supported on the subinterval $E_i$ in the sense that
\[
f_i(t) = \mu_i\left( \frac{i-1}{n}, \frac{i-1 + t}{n}\right], \quad f_i(1) = \mu_i(E_i).
\]
Since $\{E_i\}_{1 \leq i \leq n}$ is a partition of $(0, 1]$, together the coordinate functions represent a probability measure $\mu_f$ in $\mathcal{M}_1(0, 1]$ given by $\mu_f = \sum_{i = 1}^n \mu_i$. Let $\dot{f}_i(s)$ denote the density of the absolutely continuous part of $f_i$ (with respect to $\mathrm{Leb}$) at $s \in (0, 1]$.

\medskip

We now explain how elements of $\mathcal{I}_n$ induce transport paths in $\Delta_n$. Let $p, q \in \Delta_n$ be given, and let $f \in \mathcal{I}_n$ be such that $f(1) = q \odot p^{-1}$. For each $t \in [0, 1]$, let $\pi(t)$ be the unique element in $\simp_n$ such that 
\begin{equation}\label{eqn:pi.t.formula}
\pi_i(t) \propto (1-t)\frac{1}{n} + f_i(t), \quad 1 \leq i \leq n.
\end{equation}
In particular, $\pi(0)=\overline{e}$ and $\pi(1)=f(1)$. Given $f$, we define an interpolation $\{q(t)\}_{0 \leq t \leq 1}$ between $p$ and $q$ by $q(t) = p \odot \pi(t)$, i.e.,
\begin{equation} \label{eqn:q.t.formula}
q_i(t)= \frac{(1-t) p_i/n + f_i(t) p_i}{ (1-t)/n + \sum_{j=1}^n f_j(t) p_j}, \quad 1 \leq i \leq n.
\end{equation}
The cost of transporting $p$ to $q$ along this path is taken to be $H\left(\leb\mid \mu_f \right)$. We formalize the above discussion with the following definition.

\begin{defn} [Dynamic cost function] \label{def:Lagrangian}
	Let $p, q \in \Delta_n$. Consider a path $\{q(t)\}_{0 \leq t \leq 1}$ of the form \eqref{eqn:q.t.formula} for some $f \in \mathcal{I}_n$ with $f(1) = q \odot p^{-1}$. We define the transport cost of this path by the Lagrangian action
	\begin{equation} \label{eqn:Lagrangian}
	I(f) := I(\mu_f) = H\left( \mathrm{Leb} \mid \mu_f \right) = -\frac{1}{n} \sum_{i = 1}^n \int_0^1 \log \dot{f}_i(s) ds,
	\end{equation}
	{when $\mathrm{Leb} \ll \mu_f$ and $I(f) = \infty$ otherwise.}
\end{defn}

From Lemma \ref{lem:entropy.restriction}, we immediately have the following

\begin{prop} \label{prop:Lagrangian}
	For given $p, q \in \Delta_n$, let $\pi = p^{-1} \odot q$. Then
	\begin{equation} \label{eqn:dynamic.cost.inf}
	\inf\{ I(f) : f(1) = \pi\} = H\left( \overline{e} \mid \pi\right) = c(p, q),
	\end{equation}
	and the infimum is attained uniquely by $f(t) = t\pi$, $t \in [0, 1]$.
\end{prop}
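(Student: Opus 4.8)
The plan is to read off the proposition from Lemma \ref{lem:entropy.restriction} and Lemma \ref{lem:cost.as.entropy}, using the fact that the assignment $f \mapsto \mu_f$ maps $\{f \in \mathcal{I}_n : f(1) = \pi\}$ injectively into the class $\mathcal{M}_{\pi}$ of Lemma \ref{lem:entropy.restriction}. Concretely, I would split the argument into a lower bound, attainment, identification with $c(p,q)$, and uniqueness.

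For the lower bound: if $f \in \mathcal{I}_n$ with $f(1) = \pi$, then $\mu_f(E_i) = f_i(1) = \pi_i$ for every $i$, so $\mu_f \in \mathcal{M}_{\pi}$; hence $I(f) = H(\mathrm{Leb} \mid \mu_f) \geq \inf_{\mu \in \mathcal{M}_{\pi}} H(\mathrm{Leb} \mid \mu) = H(\overline{e} \mid \pi)$ by Lemma \ref{lem:entropy.restriction} (and when $\mathrm{Leb} \not\ll \mu_f$ this is trivial since $I(f) = \infty$). Taking the infimum over such $f$ gives $\inf\{I(f) : f(1) = \pi\} \geq H(\overline{e} \mid \pi)$. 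For attainment, I would check that $f(t) = t\pi$ indeed lies in $\mathcal{I}_n$: each $f_i(t) = t\pi_i$ is continuous, strictly increasing (here one uses that $\pi \in \Delta_n$ has strictly positive coordinates), vanishes at $t = 0$, and $f(1) = \pi \in \Delta_n$. Moreover the induced measure $\mu_f$ is precisely the measure $\mu^*$ from the proof of Lemma \ref{lem:entropy.restriction} — the one that is uniform on each $E_i$ with mass $\pi_i$ — so $I(f) = H(\mathrm{Leb} \mid \mu^*) = H(\overline{e} \mid \pi)$. This shows the infimum is attained at $f(t) = t\pi$ and equals $H(\overline{e} \mid \pi)$. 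Finally, since $\odot$ is commutative we have $\pi = p^{-1} \odot q = q \odot p^{-1}$, so Lemma \ref{lem:cost.as.entropy} yields $H(\overline{e} \mid \pi) = c(p,q)$, completing \eqref{eqn:dynamic.cost.inf}.

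For uniqueness, suppose $f \in \mathcal{I}_n$ satisfies $f(1) = \pi$ and $I(f) = H(\overline{e} \mid \pi)$. Then $\mu_f \in \mathcal{M}_{\pi}$ attains the infimum in Lemma \ref{lem:entropy.restriction}, which is attained uniquely by $\mu^*$; hence $\mu_f = \mu^*$. Recovering the coordinate distribution functions from $\mu_f$ — namely $f_i(t) = \mu_f\big(\big(\tfrac{i-1}{n}, \tfrac{i-1+t}{n}\big]\big)$ — and using that $\mu^*$ restricted to $E_i$ is uniform with total mass $\pi_i$, we get $f_i(t) = t\pi_i$, i.e. $f(t) = t\pi$. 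I do not expect a genuine obstacle here: the entire mathematical content sits in Lemma \ref{lem:entropy.restriction} (information monotonicity plus tensorization), and the only points needing a line of care are verifying that the candidate path $f(t) = t\pi$ belongs to $\mathcal{I}_n$ (which is where the openness of $\Delta_n$ enters) and that $f \mapsto \mu_f$ is injective on the constrained family, so that uniqueness of the optimal measure transfers to uniqueness of the optimal path.
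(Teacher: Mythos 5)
Your proposal is correct and is exactly the argument the paper intends: the paper's "proof" is the single line "From Lemma \ref{lem:entropy.restriction}, we immediately have the following," and your write-up simply makes explicit the reduction via $f \mapsto \mu_f$, the identification of the minimizer $f(t) = t\pi$ with the piecewise-uniform measure $\mu^*$, and the transfer of uniqueness through the injectivity of $f \mapsto \mu_f$. No gaps.
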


By Proposition \ref{prop:Lagrangian}, the unique cost minimizing interpolating path, given the two end points, satisfies $f(t)=t\pi(1)$ and
\eq\label{eq:interpol}
\pi(t)= (1-t) \overline{e} + t (q \odot p^{-1}), \quad q(t)= p \odot \pi(t),\quad q(0)=p,\; q(1)=q.
\en
Hence, $\pi(\cdot)$ is the linear interpolation between the barycenter of $\simp_n$ and $\pi(1)$. This is the same as the displacement interpolation (see Remark \ref{rmk:straight.line}).

Our Lagrangian action \eqref{eqn:Lagrangian} should be compared with the classical integrated kinetic energy
\begin{equation} \label{eqn:KE}
\int_0^1 \|\dot{\omega}(t)\|^2 dt, \quad \omega(0) = x, \ \omega(1) = y,
\end{equation}
corresponding to the quadratic cost $\|x - y\|^2$ (here $\dot{\omega}$ is the velocity). The minimizing curves in \eqref{eqn:KE} are constant-velocity straight lines. In \eqref{eqn:Lagrangian}, it is the portfolio weights that travel along constant-velocity straight lines.

Now we formulate the dynamic extension to the multiplicative particle system described in Section \ref{sec:particle.system}. Recall that the standard gamma subordinator is a right-continuous, increasing L\'evy process $\{\gamma(t)\}_{t \geq 0}$ such that $\gamma(0)=0$ and $\gamma(t)$ is distributed as $\mathrm{Gamma}(t, 1)$. The gamma subordinator can be thought of as a (random) measure on $(0,\infty)$. It has no mass at zero, since, almost surely, $\gamma(0+)=0$ by right continuity. 

For $\lambda >0$, we will normalize this measure to get the family of Dirichlet processes as a random probability measure on the interval $(0,1]$. More formally, given $\lambda>0$, define the {\diri} process $D^{\lambda}$ as a (random) element in $M_1(0,1]$ such that 
\eq\label{eq:gammabridge}
D^{\lambda}\left(a,b   \right]= \frac{\gamma(b \lambda) - \gamma(a \lambda)}{\gamma(\lambda)},\quad 0< a < b\le 1. 
\en
That is, $D^{\lambda}$ is the measure with the distribution function $\gamma(t \lambda)/\gamma(\lambda)$, $0 \leq t \leq 1$. Interestingly, this process is also the conditional process given $\gamma(\lambda)=1$ (see \cite{EY04}). We will often write $D^{\lambda}(t)$ to denote the distribution function $D^{\lambda}[0,t]=D^{\lambda}(0,t]$. The large deviations of the Dirichlet process, as $\lambda \rightarrow \infty$, connects it with our transport problem. The following result is originally due to Lynch and Sethuraman \cite{LS87} and the following statement is taken from \cite{F07}. See in particular Theorem 4.7 (for $\alpha=0$) and Lemma 4.5 (for $\nu=\leb$). 

\begin{lemma}\label{lem:dirldp} The family of laws $\left( D^{\lambda},\; \lambda \ge 0 \right)$ satisfies a Large Deviation Principle (LDP) on ${\mathcal{M}}_1(0,1]$ with speed $\lambda$ and a good rate function given by $I(\mu) = H\left( \leb \mid \mu\right)$. 
\end{lemma}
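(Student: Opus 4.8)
The plan is to identify $D^\lambda$ with a Dirichlet random probability measure and then invoke the large deviation principle of Lynch and Sethuraman \cite{LS87}, in the form recorded in \cite{F07}. First I would note that, by the gamma construction \eqref{eq:gammabridge}, $D^\lambda$ is precisely the Dirichlet random measure on $(0,1]$ with parameter measure $\lambda\,\leb$ (total mass $\lambda$, normalized base $\leb$): for any partition of $(0,1]$ into intervals $J_1,\dots,J_m$, the independence and scaling properties of the gamma increments give that $\gamma(\lambda)=\sum_j\bigl(\gamma(b_j\lambda)-\gamma(a_j\lambda)\bigr)$ and that $\left(D^\lambda(J_1),\dots,D^\lambda(J_m)\right)$ is $\mathrm{Dirichlet}(\lambda|J_1|,\dots,\lambda|J_m|)$, and these finite-dimensional laws are consistent. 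This is exactly the setting of \cite[Lemma 4.5]{F07} with $\nu=\leb$.

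Second, I would quote \cite[Theorem 4.7]{F07} (in their notation, the parameter $\alpha=0$), which asserts that the family $(D^\lambda)_{\lambda>0}$ of such Dirichlet measures satisfies an LDP with speed $\lambda$ and good rate function $\mu\mapsto H(\nu\mid\mu)$; specializing $\nu=\leb$ via \cite[Lemma 4.5]{F07} yields the rate function $I(\mu)=H(\leb\mid\mu)$, which is precisely the functional introduced before Lemma \ref{lem:entropy.restriction} and used in Definition \ref{def:Lagrangian}, with the convention $H(\leb\mid\mu)=+\infty$ whenever $\leb\not\ll\mu$. I would also flag the order of the arguments here: it is the \emph{reverse} relative entropy, characteristic of the Dirichlet-process/Sanov duality, and it is exactly this ordering that makes $I$ project down onto the cost $c$, by Lemma \ref{lem:entropy.restriction} and Proposition \ref{prop:Lagrangian}.

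If one preferred a self-contained argument, the natural route is the Dawson--G\"artner projective limit theorem. Over the dyadic partition of $(0,1]$ into $2^k$ equal intervals, the vector of masses of $D^\lambda$ is $\mathrm{Dirichlet}(\lambda 2^{-k},\dots,\lambda 2^{-k})$, which satisfies an LDP with rate function the discrete relative entropy against the uniform vector; this is the same Stirling estimate used in the computation \eqref{eqn:density.limit.computation} (now with $p=\overline{e}$). These finite-level rate functions are nondecreasing in $k$ by information monotonicity (cf.\ Lemma \ref{lem:entropy.restriction}) and their supremum over $k$ equals $H(\leb\mid\mu)$, so the projective limit theorem produces the LDP in the projective-limit topology. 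The real obstacle --- and the reason I would cite \cite{LS87, F07} rather than reprove this --- is upgrading that statement to the stated topology on $\mathcal{M}_1(0,1]$: this requires exponential tightness of $(D^\lambda)_{\lambda>0}$, i.e.\ an exponential-scale control of the oscillation of the distribution function $t\mapsto\gamma(t\lambda)/\gamma(\lambda)$, which is a genuine analytic estimate on the gamma subordinator and not a formal manipulation.
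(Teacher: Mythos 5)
Your proposal is correct and follows essentially the same route as the paper, which gives no independent proof but simply identifies $D^\lambda$ as the Dirichlet random measure with parameter measure $\lambda\,\leb$ and cites \cite{LS87} via \cite[Theorem 4.7 ($\alpha=0$) and Lemma 4.5 ($\nu=\leb$)]{F07}, exactly as you do. Your added remarks on the ordering of the relative entropy and on the projective-limit/exponential-tightness issue are accurate but supplementary to what the paper records.
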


We can now extend Theorem \ref{thm:static.transport} to this dynamic setting. Consider independent i.i.d.~samples $\{p(j),\; j\ge 1\}$ from $P_0$ and $\{q(j),\; j \ge 1\}$ from $P_1$. 
For each $\sigma \in \perm_N$ we define 
\[
\pi^\sigma(j) = q(\sigma(j)) \odot \left(p(j)\right)^{-1}, \quad j \geq 1.
\]
Extend the underlying probability space $\left( \Omega, \mathcal{F}, \mathbb{P} \right)$ to include an i.i.d.~sequence $\left\{ \gamma(j),\; j \geq 1 \right\}$ where each $\gamma(j)$ is an $n$-dimensional vector of independent standard gamma subordinators $\left(\gamma_1(j,t), \ldots, \gamma_n(j,t),\; t\ge 0\right)$. By an abuse of notation we retain the same notation $\left( \Omega, \mathcal{F}, \mathbb{P} \right)$ for the extended probability space. For each $N \geq 1$, define processes $\left(\pi^\sigma(i,t),\; 0\le t \le 1,\;1 \leq i \leq N \right)$, $0\le t \le 1$, by 
\eq\label{eq:gammasubrep}
\pi_i^\sigma(j,t) = \frac{\gamma_i(j,t \lambda)}{\gamma_i(j, \lambda)} \pi^\sigma_i(j), \quad 1 \leq i \leq n, \; 1 \leq j \leq N.
\en
Then each $\pi^\sigma(j,\cdot)\in \inc_n$, and therefore, as described above, can be thought of as the distribution function of a random element in $M_1(0,1]$. For notational brevity, let us denote that random measure also by $\pi^\sigma(j, \cdot)$. The context will make it clear whether we refer to the measure or its distribution function.  

\begin{lemma}\label{lem:conddir}
	The law of $\pi^\sigma(j,\cdot)$ on $\mathcal{M}_1(0,1]$ is the conditional distribution of $D^{n \lambda}$, given the events
	\[
	D^{n \lambda}\left( E_i\right)= \pi^\sigma_i(j), \quad 1 \leq i \leq n.
	\]
\end{lemma}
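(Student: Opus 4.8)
The plan is to unwind the definitions and exhibit the claimed conditional law directly from the well-known properties of the gamma subordinator. Recall that $D^{n\lambda}$ is by definition the random measure on $(0,1]$ with distribution function $t \mapsto \gamma(nt\lambda)/\gamma(n\lambda)$, where $\gamma$ is a single standard gamma subordinator. Since $\{E_i\}$ partitions $(0,1]$ with $E_i = ((i-1)/n, i/n]$, the increments $\gamma(in\lambda/n) - \gamma((i-1)n\lambda/n) = \gamma(i\lambda) - \gamma((i-1)\lambda)$ are independent by the independent-increments property of the L\'evy process, and each is distributed as $\mathrm{Gamma}(\lambda, 1)$. Thus $\left(D^{n\lambda}(E_i)\right)_{1\le i\le n}$ is a normalized vector of $n$ i.i.d.\ $\mathrm{Gamma}(\lambda)$ variables, i.e.\ a symmetric Dirichlet vector with parameters $(\lambda,\ldots,\lambda)$, and the process $D^{n\lambda}$ restricted to each $E_i$, suitably rescaled in time, is an independent copy of a gamma bridge.

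Next I would make precise the relationship between the rescaled-by-blocks structure of $D^{n\lambda}$ and the representation \eqref{eq:gammasubrep}. Concretely, fix $j$ and $\sigma$, and write $\pi = \pi^\sigma(j)$. On the block $E_i$, the increment of the distribution function of $\pi^\sigma(j,\cdot)$ over the sub-interval corresponding to parameter $t\in[0,1]$ is $\frac{\gamma_i(j, t\lambda)}{\gamma_i(j,\lambda)}\pi_i$, so that the total mass it assigns to $E_i$ is exactly $\pi_i$, and within $E_i$ the conditional shape is that of the normalized gamma subordinator $\gamma_i(j,\cdot\lambda)/\gamma_i(j,\lambda)$, i.e.\ a standard gamma bridge on $[0,\lambda]$ rescaled to $[0,1]$. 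On the other hand, I claim the analogous statement holds for $D^{n\lambda}$ conditioned on $\{D^{n\lambda}(E_i) = \pi_i,\ 1\le i\le n\}$: conditioning the normalized subordinator on the values of the block totals leaves, within each block, an independent gamma bridge, independent of the block totals. This is where I would invoke the structure of the gamma process: conditionally on $\gamma(\lambda) = c$, the rescaled path $(\gamma(t\lambda)/c)_{0\le t\le 1}$ is a standard gamma bridge independent of $c$ (the fact cited after \eqref{eq:gammabridge}, from \cite{EY04}); applying this on each of the $n$ blocks separately and using independence across blocks gives the factorization.

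So the key steps, in order, are: (1) identify $\left(D^{n\lambda}(E_i)\right)_i$ as a normalized vector of $n$ i.i.d.\ $\mathrm{Gamma}(\lambda)$'s using independent and stationary increments; (2) use the gamma-bridge/conditioning property to show that, given the block totals, $D^{n\lambda}$ decomposes into independent gamma bridges on the $n$ blocks, each with total mass equal to the prescribed $\pi_i$; (3) observe that the construction \eqref{eq:gammasubrep} is term-by-term exactly this decomposition: the $i$-th coordinate subordinator $\gamma_i(j,\cdot)$ supplies the gamma bridge on $E_i$ after normalization by $\gamma_i(j,\lambda)$ and rescaling of mass by $\pi_i^\sigma(j)$, and the $\gamma_i(j,\cdot)$ are independent over $i$; (4) conclude that the two laws on $\mathcal{M}_1(0,1]$ agree, checking for instance on the generating family of events $\{\mu(E_i) = \pi_i,\ \mu(((i-1)/n,(i-1+t)/n]) \le x_i,\ \forall i\}$ as in the measure-theoretic verification alluded to in Remark \ref{rmk:discrete.Schrodinger}.

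The main obstacle I anticipate is step (2): carefully justifying that conditioning the \emph{normalized} gamma subordinator $D^{n\lambda}$ on the vector of block totals produces independent gamma bridges across blocks. One has to be a little careful because the normalization by $\gamma(n\lambda)$ couples the blocks, so the cleanest route is probably to pass to the \emph{unnormalized} picture first: realize $D^{n\lambda}$ via a single gamma subordinator $\gamma$, note that the block increments $(\gamma(i\lambda) - \gamma((i-1)\lambda))_{i}$ are independent, condition on each of them (equivalently on the block totals and their sum), apply the within-block gamma-bridge property independently in each block, and only then normalize — at which point the sum of block totals is absorbed into the Dirichlet normalization and disappears. Once the unnormalized decomposition is established, the normalization is harmless since it is a deterministic function of the conditioning data. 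Everything else is routine bookkeeping with distribution functions and the definition of $\inc_n$.
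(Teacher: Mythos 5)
Your proposal is correct and follows essentially the same route as the paper's proof: decompose the single subordinator into $n$ independent block subordinators via stationary independent increments, invoke the bridge property from \cite{EY04} (independence of the normalized paths $\gamma_i(\cdot\lambda)/\gamma_i(\lambda)$ from the block totals and from each other), and match the resulting conditional decomposition with the construction \eqref{eq:gammasubrep}. Your extra care about passing to the unnormalized picture before conditioning is a reasonable tightening of a point the paper treats briefly (the conditioning event is measurable with respect to the vector of block totals, so the same independence argument applies), but it does not change the substance of the argument.
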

\begin{proof}
	We drop the index $j$ for this proof. Suppose $\gamma$ is standard gamma subordinator. Then, by stationary independent increment property 
	\[
	\gamma_i(t):= \gamma\left( (i-1)n + t \right) - \gamma((i-1)n), \quad 0\le t \le \lambda, \; 1 \leq i \leq n,
	\]
	are $n$ independent gamma subordinators run on the time interval $[0, \lambda]$. The event $D^{n \lambda}(E_i)=\pi^\sigma_i$ is equivalent to the event $\gamma_i(\lambda)= \pi^\sigma_i$. By the conditional independence property of the Dirichlet process mentioned below \eqref{eq:gammabridge} the vector of measures with distribution functions $\left( \gamma_i(\cdot \lambda)/\gamma_i(\lambda),\; 1 \leq i \leq n\right)$ are jointly independent of each other and also independent of the random vector $\left(\gamma_i(\lambda),\; 1 \leq i \leq n \right)$. In particular, conditioning on the latter has no effect on the former. This completes the proof. 
\end{proof}

For $\sigma \in \mathcal{S}_N$ and $j \in [N]$, let 
\begin{equation} \label{eqn:conditional.path}
q^\sigma(j,t):= p(j) \odot \pi^\sigma(j,t),
\end{equation}
where $\pi^\sigma(j,\cdot)$ is sampled from the conditional distribution given in Lemma \ref{lem:conddir} for the given initial and end points. Let $M^{\sigma}_N$ be the empirical path
\[
M^\sigma_N:=  \frac{1}{N} \sum_{j=1}^N \delta_{\{q^\sigma(j, \cdot) \}}.
\]
It is a probability measure on the space of RCLL paths from $[0,1]$ to $\simp_n$. That is, with probability $\frac{1}{N}$ it chooses the pair $(p(j), q(\sigma(j)))$ and the path $q^\sigma(j, \cdot)$ given by \eqref{eqn:conditional.path}.

Analogous to \eqref{eqn:M.N}, we let
\[
M_N := \sum_{\sigma \in \mathcal{S}_N} \nu_N^{\sigma} M^{\sigma}_N,
\]
where the weights $\nu_N^{\sigma}$ are given as in \eqref{eqn:mixture.weight}. The interpretation is the same, except that the pair in the discrete system is replaced by a (random) path.

\begin{thm}\label{thm:dynconv}
	Under the assumptions of Theorem \ref{thm:static.transport}, $\mathbb{P}$ almost surely, $M_N^{\sigma}$ converges weakly to the delta mass on the path $(q(t),\; 0\le t\le 1)$ given by \eqref{eq:interpol}, where $(p,q)$ is chosen at random from the optimal Monge coupling of $P_0$ and $P_1$. 
\end{thm}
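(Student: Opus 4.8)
The plan is to reduce the dynamic statement to the static Theorem \ref{thm:static.transport} together with an LDP-type ``freezing'' of the conditional Dirichlet process. First I would note that the static argument already identifies, $\mathbb P$-almost surely, the limiting coupling of endpoints: by Theorem \ref{thm:static.transport}, $M_N$ (now viewed via endpoints $(p(j),q(\sigma(j)))$) converges in $\mathcal W_2$ to the optimal Monge coupling $R^*$, and moreover the mixture weights $\nu_N^\sigma$ concentrate on permutations $\sigma$ that essentially agree with the optimal matching $\hat q(j)=T(p(j))$. Indeed, Step 3--5 of that proof show $\sum_{\sigma\notin\mathcal G_N}\nu_N^\sigma\to 0$, where $\mathcal G_N$ consists of $\sigma$ for which $\frac1N\sum_j\|q(\sigma(j))-\hat q(j)\|^2$ is of order $N^{-1/n}\log N$. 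So with the same choice $\lambda_N=\tfrac4\alpha N^{2/n}$, the dynamic mixture $M_N$ puts asymptotically full weight on paths $q^\sigma(j,\cdot)$ whose endpoints $(p(j),q(\sigma(j)))$ are close to $(p(j),\hat q(j))$.

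Second, I would control the path in between the endpoints. Fix $j$ and a ``good'' $\sigma$, so the endpoint $\pi^\sigma(j)=q(\sigma(j))\odot p(j)^{-1}$ is close to $\hat\pi(j):=\hat q(j)\odot p(j)^{-1}$. By Lemma \ref{lem:conddir}, $\pi^\sigma(j,\cdot)$ is distributed as $D^{n\lambda_N}$ conditioned on $D^{n\lambda_N}(E_i)=\pi_i^\sigma(j)$, $1\le i\le n$. As $\lambda_N\to\infty$, Lemma \ref{lem:dirldp} says $D^{n\lambda_N}$ satisfies an LDP with speed $n\lambda_N$ and good rate $I(\mu)=H(\mathrm{Leb}\mid\mu)$; conditioning on the (closed, then open) event $\{D^{n\lambda_N}(E_i)\approx\pi_i^\sigma(j)\}$ and using Proposition \ref{prop:Lagrangian}, the conditional law concentrates on the unique minimizer $f(t)=t\,\pi^\sigma(j)$, i.e. on the measure $\mu^*$ that is uniform on each $E_i$ with mass $\pi_i^\sigma(j)$. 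In distribution-function terms, $\pi^\sigma(j,t)\to t\,\pi^\sigma(j)$ uniformly in $t$, in probability, as $\lambda_N\to\infty$. Since the path $q^\sigma(j,t)=p(j)\odot\pi^\sigma(j,t)$ depends continuously (uniformly on $\Delta_n$, away from the boundary, using the $\mathcal L_a$ / log-moment control as in Proposition \ref{prop:interpolation}) on the distribution function $\pi^\sigma(j,\cdot)$ and on the endpoint, this forces $q^\sigma(j,\cdot)$ to be uniformly close to the displacement-interpolation path $\{p(j)\odot((1-t)\overline e+t\hat\pi(j))\}_{t}$, which by \eqref{eq:interpol} is exactly the claimed path for the endpoint pair $(p(j),\hat q(j))$.

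Third, I would assemble these two ingredients. For a bounded continuous test functional $\Phi$ on RCLL paths $[0,1]\to\Delta_n$, write $\langle M_N,\Phi\rangle=\sum_\sigma\nu_N^\sigma\frac1N\sum_j\Phi(q^\sigma(j,\cdot))$. Split the $\sigma$-sum over $\mathcal G_N$ and $\mathcal G_N^c$; the latter contributes at most $\|\Phi\|_\infty\sum_{\sigma\in\mathcal G_N^c}\nu_N^\sigma\to 0$ by the static estimate. On $\mathcal G_N$, replace $q^\sigma(j,\cdot)$ by the deterministic displacement path through $(p(j),\hat q(j))$ at the cost of (i) the path-freezing error just described, uniform over $j$ after a union bound, and (ii) the endpoint error, controlled because displacement interpolation depends continuously on its endpoints; then recognize $\frac1N\sum_j\delta_{\{p(j)\odot((1-\cdot)\overline e+\cdot\,\hat\pi(j))\}}$ as the empirical measure of i.i.d.\ samples $p(j)\sim P_0$ pushed through the (continuous) map $p\mapsto\{$displacement path from $p$ to $T(p)\}$, which converges $\mathbb P$-a.s.\ by the Glivenko--Cantelli / LLN to the law of that path under $R^*$ — i.e.\ to the delta-mass description in the statement. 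Quantitatively one can carry the $\mathcal W_2$ rate of Theorem \ref{thm:static.transport} through, but weak convergence suffices for the stated claim.

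\textbf{Main obstacle.} The delicate point is the conditional-LDP / concentration step: Lemma \ref{lem:dirldp} gives the unconditional LDP for $D^{n\lambda_N}$, but I need the \emph{conditional} law given $D^{n\lambda_N}(E_i)=\pi_i^\sigma(j)$ to concentrate on $t\mapsto t\pi^\sigma(j)$, and I need this uniformly as the conditioning values $\pi^\sigma(j)$ vary over a neighborhood of the $\hat\pi(j)$'s and are themselves random. This requires either a uniform (in the conditioning parameter) large-deviation statement for the Dirichlet bridge, or an explicit argument using the gamma-subordinator representation \eqref{eq:gammasubrep}: $\gamma_i(j,t\lambda_N)/\gamma_i(j,\lambda_N)\to t$ uniformly in $t$ a.s.\ as $\lambda_N\to\infty$ by the strong law for the gamma subordinator and a maximal inequality, with a rate $O(\lambda_N^{-1/2}\sqrt{\log\lambda_N})$ that can be made uniform over the $N$ particles via a union bound since $\lambda_N=\tfrac4\alpha N^{2/n}$ grows polynomially in $N$. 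Handling the boundary of $\Delta_n$ (where $\odot$ and the log-moments degenerate) requires the $\mathcal L_a$ assumptions and a truncation argument exactly as in Proposition \ref{prop:interpolation}; this is routine but must be done with care so that the test-functional estimates remain uniform.
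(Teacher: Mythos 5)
Your proposal is correct and, in its final form, follows essentially the same route as the paper: the paper's proof simply combines the endpoint convergence from Theorem \ref{thm:static.transport} (via the continuous mapping theorem) with the strong law of large numbers for the gamma subordinator applied to the representation \eqref{eq:gammasubrep}, which gives uniform-in-$t$ convergence of $\gamma_i(j,t\lambda)/\gamma_i(j,\lambda)$ to $t$ and hence collapses each conditional path onto the displacement-interpolation line \eqref{eq:interpol}. The conditional-LDP detour you flag as the main obstacle is exactly what the paper avoids by working directly with the subordinator representation, so your fallback argument (uniform SLLN plus the static weight concentration) is the intended proof.
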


\begin{proof} Consider the representation \eqref{eq:gammasubrep}. By the well-known strong law of large numbers applied to the gamma subordinator, ${\pi^\sigma_j(i,t)}/{\pi^\sigma_j(i)}$, as a monotone function in $t\in (0,1]$, converges uniformly almost surely to the function $h(t)=t$, independent of $\sigma$, as $\lambda \rightarrow \infty$. Since, by Theorem \ref{thm:static.transport} and the continuous mapping theorem, the law of $\pi^\sigma$ converges to the portfolio map of the optimal Monge coupling of $P_0$ and $P_1$, the statement of the theorem follows.
\end{proof}

\section{Entropy along displacement interpolation} \label{sec:entropy}
Consider the displacement interpolation $\{P_t\}_{0 \leq t \leq 1}$, given in Definition \ref{def:displacement.interpolation}, for a pair of probability measures $P_0, P_1 \in \mathcal{L}_a$. In this section we study the behaviors of the entropy along such paths.

\subsection{Statement of main result}
By Theorem \ref{thm:transport.solution} there exists an exponentially concave function $\varphi_1$ such that its portfolio map $\boldsymbol{\pi}_1$ induces the optimal Monge coupling
\[
q = T_1(p) = p \odot \boldsymbol{\pi}_1(p^{-1}).
\]
To focus on the main ideas we will impose some regularity conditions on the function $\varphi$; similar conditions are also adopted in \cite{PW16, W17}. We believe they can be relaxed using the ideas of \cite[Section 4]{M97} but this will not be attempted in this paper.

\begin{asmp} \label{ass:interpolation.assumptions} { \ }
	Assume that $\varphi_1$ is twice continuously differentiable everywhere on the primal simplex $\Delta_n$, and that the quadratic form $L(r)$ defined by \eqref{eqn:matrix.L} is strictly positive definite everywhere on $\Delta_n$.
\end{asmp}

Recall the reference measure $\mu_0$ from Definition \ref{def:entropic.measure} and the Euclidean coordinate system $(p_1, \ldots, p_{n-1})$ with range $\mathcal{D}_{n-1}$.

\begin{defn}[Entropy] 
	Let $P \in \mathcal{P}(\Delta_n)$ be absolutely continuous with respect to $\mu_0$. Let $\rho = \frac{dP}{d\mu_0}$ be the density of $P$ with respect to $\mu_0$. Then, the entropy of $P$ with respect to $\mu_0$ is defined by
	\begin{equation} \label{eqn:entropy}
	\mathrm{Ent}_{\mu_0}(P) =  \int_{\mathcal{D}_{n-1}} \rho(p) \log \rho(p) d\mu_0(p).
	\end{equation}
\end{defn}

Following the convention in the theory of optimal transport, our entropy is the negative of the information-theoretic entropy.

\medskip
We are now ready to state the main result of this section.

\begin{thm} \label{thm:entropy}
	Consider the displacement interpolation $\{P_t\}_{0 \leq t \leq 1}$, defined by \eqref{eqn:displacement.interpolation}, under Assumption \ref{ass:interpolation.assumptions}. Then the map
	\begin{equation} \label{eqn:entropy.convex}
	t \mapsto \mathrm{Ent}_{\mu_0} (P_t) + n \mathbf{C}(P_0, P_t)
	\end{equation}
	is convex on $[0, 1]$.
\end{thm}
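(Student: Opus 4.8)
The plan is to reduce the convexity of $t \mapsto \mathrm{Ent}_{\mu_0}(P_t) + n\mathbf{C}(P_0,P_t)$ to a pointwise computation along the interpolating paths, much as in the McCann/CMS approach for Wasserstein interpolations in negatively curved spaces. First I would obtain an explicit formula for the density of $P_t$ with respect to the reference measure $\mu_0$. By Definition~\ref{def:displacement.interpolation} we have $P_t = (T_t)_\# P_0$ with $T_t(p) = p \odot \boldsymbol{\pi}_t(p^{-1})$, so the change-of-variables formula gives $\frac{dP_t}{d\mu_0}(T_t(p)) = \frac{dP_0}{d\mu_0}(p) \cdot J_t(p)^{-1}$, where $J_t(p)$ is the Jacobian of $T_t$ with respect to $\mu_0$ (i.e. the Radon--Nikodym derivative $(T_t)_\#(\rho_0\,\mu_0)$ computed against $\mu_0$, not Lebesgue). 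Here Lemma~\ref{lem:det.Jt} (referenced in the excerpt as computing the Jacobian of the transport map) and the new Monge--Amp\`ere equation of Theorem~\ref{thm:Monge.Ampere} are exactly the tools I expect to need. Substituting into \eqref{eqn:entropy} and changing variables back to $p$ yields
\[
\mathrm{Ent}_{\mu_0}(P_t) = \int_{\Delta_n} \left( \log \rho_0(p) - \log J_t(p) \right) dP_0(p) = \mathrm{Ent}_{\mu_0}(P_0) - \int_{\Delta_n} \log J_t(p)\, dP_0(p).
\]
So it suffices to show that $t \mapsto -\log J_t(p) + n\, c(p, T_t(p))$ is convex in $t$ for $P_0$-a.e.\ fixed $p$ (then integrate in $p$; note $\mathbf{C}(P_0,P_t) = \int c(p,T_t(p))\,dP_0(p)$ because $(p,T_t(p))$ is the optimal coupling by Proposition~\ref{prop:interpolation} and intermediate optimality).

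The second step is the pointwise convexity. Fix $p$, write $r = p^{-1}$ and recall $\boldsymbol{\pi}_t(r) = (1-t)\overline{e} + t\boldsymbol{\pi}_1(r)$ is affine in $t$. The transport cost term is $c(p, T_t(p)) = H(\overline{e} \mid \boldsymbol{\pi}_t(r))$ by Lemma~\ref{lem:cost.as.entropy}; since $H(\overline{e}\mid\cdot)$ is convex and $t\mapsto\boldsymbol{\pi}_t(r)$ is affine, $t \mapsto c(p,T_t(p))$ is already convex --- but I will need the quantitative Hessian in $t$, namely $\frac{d^2}{dt^2} H(\overline{e}\mid\boldsymbol{\pi}_t(r)) = \sum_i \frac{1}{n}\frac{((\boldsymbol{\pi}_1)_i(r) - 1/n)^2}{(\boldsymbol{\pi}_t)_i(r)^2}$, to pit it against the (possibly negative) second derivative of $-\log J_t$. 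For the Jacobian term I would differentiate $\log J_t$ twice in $t$. Using the explicit structure of $T_t$ (coordinatewise multiplication by $\boldsymbol{\pi}_t(r)$ followed by normalization, together with the inversion $p\mapsto r$), the Jacobian with respect to $\mu_0$ should factor into an explicitly computable piece times a determinant of the form $\det\bigl(I + t\,A(r)\bigr)$ where $A(r)$ encodes $\nabla^2\varphi_1$ through the matrix $L(r)$ of \eqref{eqn:matrix.L}; the exponential-coordinate computation of \cite[Section 3--4]{PW16} is the natural framework. Then $\frac{d^2}{dt^2}\log\det(I + tA) = -\tr\bigl((I+tA)^{-1}A\bigr)^2 \leq 0$ contributes the "bad" negative curvature, and the task is to show the factor $n$ times the good term dominates it uniformly, i.e. an algebraic inequality of the shape $n \sum_i \frac{1}{n}\frac{(v_i)^2}{(\boldsymbol{\pi}_t)_i^2} \geq \tr\bigl((I+tA)^{-1}A\bigr)^2$ where $v = \boldsymbol{\pi}_1(r) - \overline{e}$. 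The constant $n$ (matching the factor in the theorem) should emerge precisely here; I expect it is forced by a Cauchy--Schwarz step relating $\tr$ of a rank-considerations matrix to the sum of squares, the dimension $n$ entering through the number of simplex coordinates.

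The main obstacle I anticipate is the Jacobian computation and identifying the right matrix $A(r)$: the transport map $T_t$ involves two normalizations (one from inversion $p\mapsto r$, one from $\odot$) and the portfolio map $\boldsymbol{\pi}_t$ itself is defined via directional derivatives \eqref{eqn:portfolio.map}, so getting a clean formula $J_t = (\text{explicit}) \cdot \det(I + tA(r))$ --- ideally with $A(r)$ similar to a positive semidefinite matrix so that $(I+tA)$ is invertible for all $t\in[0,1]$ --- requires care; this is presumably the content of the referenced Lemma~\ref{lem:det.Jt} and Theorem~\ref{thm:Monge.Ampere}. The secondary obstacle is the final algebraic inequality: proving $n \cdot (\text{weighted sum of squares}) \geq (\tr \text{ term})^2$ with the sharp constant, for which I would use that $\sum_i ((\boldsymbol{\pi}_t)_i - 1/n \cdot \text{something}) = 0$ constrains $v$ to the tangent space, apply Cauchy--Schwarz against the weights $(\boldsymbol{\pi}_t)_i$, and track how the $1/n$ normalization in $\varphi_0 = \frac1n\sum\log r_i$ propagates. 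Once these two ingredients are in place, integrating the pointwise convexity against $dP_0(p)$ and adding back the constant $\mathrm{Ent}_{\mu_0}(P_0)$ finishes the proof.
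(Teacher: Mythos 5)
Your first step --- using the Monge--Amp\`ere equation of Theorem~\ref{thm:Monge.Ampere} to write $\mathrm{Ent}_{\mu_0}(P_t)$ as $\mathrm{Ent}_{\mu_0}(P_0)$ minus the integrated log-Jacobian, and reducing to a pointwise-in-$p$ statement (using intermediate optimality so that $\mathbf{C}(P_0,P_t)=\int c(p,T_t(p))\,dP_0$) --- is exactly how the paper proceeds, and your formula $\frac{d^2}{dt^2}H(\overline{e}\mid\boldsymbol{\pi}_t(r)) = \frac1n\sum_i \dot\pi_i^2/\pi_i^2$ with $\dot\pi=\boldsymbol{\pi}_1(r)-\overline{e}$ is correct. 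The gap is in the final step: you have misidentified what the factor $n$ is for and what it must dominate. The Jacobian of $T_t$ relative to $\mu_0$ is not $(\text{explicit, $t$-independent})\times\det(I+tA)$; by Theorem~\ref{thm:Monge.Ampere} the density transforms by the factor $\frac{\pi_1\cdots\pi_n}{\det(\tilde L_t(r))}\cdot\frac{1}{r_1^2\cdots r_n^2}$, so the entropy acquires the $t$-dependent term $\int\sum_i\log(\boldsymbol{\pi}_t(r))_i\,dP_0$, which is \emph{concave} in $t$ with second derivative $-\sum_i\dot\pi_i^2/\pi_i^2$. Since $n\,c(p,T_t(p)) = nH(\overline{e}\mid\boldsymbol{\pi}_t(r))=-n\log n-\sum_i\log(\boldsymbol{\pi}_t(r))_i$, the cost term cancels this concave contribution \emph{identically} --- there is no competition and no sharp Cauchy--Schwarz; the constant $n$ is forced by this exact algebraic cancellation, not by a dimension count in an inequality.

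What survives the cancellation is $-\int\log\det(\tilde L_t(r))\,dP_0(p)$ plus a constant, and this must be shown convex in $t$ \emph{on its own}; the cost term has already been spent. The inequality you propose, $n\cdot\frac1n\sum_i v_i^2/\pi_i^2\ \geq\ \mathrm{tr}\bigl(((I+tA)^{-1}A)^2\bigr)$, is false in general: the left side vanishes wherever $\boldsymbol{\pi}_1(r)=\overline{e}$, which constrains only $\nabla\varphi_1(r)$, while the right side involves $\nabla^2\varphi_1(r)$ and need not vanish there. Moreover $\tilde L_t=-\nabla^2\tilde\varphi_t-(\nabla\tilde\varphi_t)(\nabla\tilde\varphi_t)^{\top}$ is quadratic, not affine, in $t$, so your $\det(I+tA)$ formula does not apply as stated. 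The missing ingredients are: (i) $t\mapsto\tilde L_t(r)$ is concave in the L\"owner order (the convexity deficit is an explicit rank-one positive semidefinite matrix built from $\nabla\tilde\varphi_{t_1},\nabla\tilde\varphi_{t_2}$), and (ii) $-\log\det(\cdot)$ is both convex \emph{and non-increasing} in the L\"owner order, the monotonicity (via the Minkowski determinant inequality) being essential precisely because $\tilde L_t$ is not affine. Composing (i) and (ii) gives the convexity of $t\mapsto-\log\det\tilde L_t(r)$ and hence the theorem. (Equivalently, one can check directly that both terms of $\frac{d^2}{dt^2}(-\log\det\tilde L_t)=\mathrm{tr}\bigl((\tilde L_t^{-1}\dot{\tilde L}_t)^2\bigr)-\mathrm{tr}\bigl(\tilde L_t^{-1}\ddot{\tilde L}_t\bigr)$ are non-negative, the second because $\ddot{\tilde L}_t\preceq 0$.)
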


\begin{rmk} [Discussion] \label{rmk:convexity.discussion}
	While the entropy itself may not be convex along the displacement interpolation, Theorem \ref{thm:entropy} states that it becomes convex after adding $n$ times the transport cost from $P_0$ to $P_t$. It is well known that Ricci curvature lower bounds of a Riemannian manifold can be characterized by the semiconvexity of entropy with respect to the $2$-Wasserstein displacement interpolation (see \cite{V08} for an in-depth treatment). Since our cost function is an asymmetric divergence rather than a squared distance, this theory does not apply to our transport cost ${\bf C}(P, Q)$. On the other hand, the geometry induced by a divergence on a manifold has been studied extensively in information geometry \cite{A16}. Recently, we showed in \cite{PW16} that the cost function \eqref{eqn:cost.function} induces on $\Delta_n$ an information geometry with constant sectional curvature $-1$, and we suspect that this property is related to the convexity result \eqref{eqn:entropy.convex}. We believe that Theorem \ref{thm:entropy} is a first step towards a theory of generalized geometric structures on space of probability distributions.
\end{rmk}

\subsection{A Monge-Amp\`{e}re equation} \label{sec:Monge.Ampere}
The main ingredient of the proof of Theorem \ref{thm:entropy} is a Monge-Amp\`{e}re equation which relates the measures $P_0$ and $P_t = (T_t)_{\#} P_0$ in our transport problem. Namely, if we write
\begin{equation} \label{eqn:interpolation.densities}
dP_0(p) = \rho_0(p) d\mu_0(p) \quad \text{and} \quad dP_t(q) = \rho_t(q) d\mu_0(q),
\end{equation}
where $\rho_0$ and $\rho_t$ are respectively the densities of $P_0$ and $P_t$ with respect to the reference measure $\mu_0$ (see \eqref{eqn:entropic.measure}), we want to express $\rho_t$ in terms of $\rho_0$ and the transport map. 

We begin by introducing some notations. Fix $0 \leq t \leq 1$. For $p \in \Delta_n$ we let $r = p^{-1}$. Recall that $\varphi_t = (1 - t) \varphi_0 + t \varphi_1$, where $\varphi_0(r) = \frac{1}{n} \sum_{i = 1}^n \log r_i$ and $\varphi_1$ is given by Theorem \ref{thm:transport.solution}. Let
\[
\tilde{r} = (r_1, \ldots, r_{n-1}) \in \mathcal{D}_{n-1}
\]
be the (first $n - 1$) coordinates of $r$ and write $\varphi_t(r) = \tilde{\varphi}_t(\tilde{r})$ as a function of $\tilde{r}$. Also let
\[
\tilde{L}_t(\tilde{r}) := - \nabla^2 \tilde{\varphi}_t( \tilde{r}) - (\nabla \tilde{\varphi}_t(\tilde{r}))(\nabla \tilde{\varphi}_t(\tilde{r}))^{\top}
\]
be the Riemannian matrix of the $L$-divergence of $\tilde{\varphi}_t$ under coordinate system $\tilde{r}$. Abusing notations, we also write $\tilde{L}_t(\tilde{r}) = \tilde{L}_t(r)$. By Assumption \ref{ass:interpolation.assumptions} we have $\mathrm{det}(\tilde{L}_t(\tilde{r})) > 0$ for all $\tilde{r} \in \mathcal{D}_{n-1}$. Now we may state the equation which is quite different from the classical Monge-Amp\`{e}re equation for the quadratic cost (see for example \cite[Theorem 4.8]{V03}).

\begin{thm} \label{thm:Monge.Ampere}
	Let $p \in \Delta_n$, $r = p^{-1}$ and $q =  T_t(p)$. Write $\pi = (\pi_1, \ldots, \pi_n) = \boldsymbol{\pi}_t(r)$. Using the notations of \eqref{eqn:interpolation.densities}, we have
	\begin{equation} \label{eqn:Monge.Ampere}
	\rho_t(q) =  \rho_0(p) \frac{\pi_1  \cdots \pi_n}{\mathrm{det}(\tilde{L}_t(r))} \frac{1}{r_1^2 \cdots r_n^2}.
	\end{equation}
\end{thm}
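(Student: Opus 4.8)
The plan is to prove the Monge--Amp\`ere equation \eqref{eqn:Monge.Ampere} by the standard change-of-variables argument: express the defining identity $P_t = (T_t)_\# P_0$ in the Euclidean coordinate charts on the dual simplex, compute the Jacobian of $T_t$, and then convert densities with respect to Lebesgue measure into densities with respect to $\mu_0$ using the explicit form \eqref{eqn:entropic.measure} of $\mu_0$. Since $\mu_0$ has density $1/(p_1\cdots p_n)$ in the chart $\mathcal{D}_{n-1}$, the factor $1/(r_1^2\cdots r_n^2)$ appearing in \eqref{eqn:Monge.Ampere} should emerge as a product of two contributions: the $1/(p_1\cdots p_n)$ from rewriting $dP_0$ in Lebesgue terms, and a $1/(q_1\cdots q_n)$ from rewriting $dP_t$, combined with the relations $r = p^{-1}$ and $q = T_t(p) = p\odot\boldsymbol{\pi}_t(r)$.

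Concretely, I would proceed as follows. \emph{Step 1.} Work in the primal chart: since $r = p^{-1}$, the map $p \mapsto r$ is a diffeomorphism of $\mathcal{D}_{n-1}$ with a Jacobian that is easy to compute from \eqref{eqn:simplex.inverse}; in fact one checks that pushing $\mu_0$ forward under $p\mapsto p^{-1}$ leaves $\mu_0$ invariant (it is the Haar measure for $\odot$, and inversion is a group automorphism), so it is cleanest to phrase everything in the variable $r$ and use $d\mu_0(r) = \frac{1}{r_1\cdots r_n}\,dr_1\cdots dr_{n-1}$. \emph{Step 2.} The transport map in these coordinates is $r \mapsto q$ with $q_i \propto (\boldsymbol{\pi}_t(r))_i/r_i$, cf.~\eqref{eqn:q.as.weight.ratio}. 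Compute the Jacobian determinant $\left|\det \frac{\partial(q_1,\ldots,q_{n-1})}{\partial(r_1,\ldots,r_{n-1})}\right|$. This is the heart of the matter; I expect it to equal $\det(\tilde L_t(r))$ up to explicit factors involving $\pi_1\cdots\pi_n$ and $r_1\cdots r_n$. The key algebraic inputs are: $(\boldsymbol{\pi}_t(r))_i = r_i(1+\nabla_{e_i-r}\varphi_t(r))$ from \eqref{eqn:portfolio.map}, so that $\pi_i/r_i = 1 + \nabla_{e_i-r}\varphi_t(r)$, and the identification of $\tilde L_t$ with $-\nabla^2\tilde\varphi_t - (\nabla\tilde\varphi_t)(\nabla\tilde\varphi_t)^\top$. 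One should differentiate $\pi_i/r_i$ in $r_j$ (in the $(n-1)$-dimensional chart, accounting for the constraint through $r_n = 1-\sum_{j<n}r_j$), factor out a diagonal matrix $\diag(\pi_i/r_i)$ or $\diag(1/r_i)$ as appropriate, and recognize the remaining determinant via the matrix determinant lemma as $\det(\tilde L_t)$ times the normalization from the map $q \mapsto $ its normalized version. The computation of the Jacobian of the normalization map $y \mapsto y/\sum_j y_j$ (another matrix-determinant-lemma application, as in the proof of Lemma \ref{lem:Dirichlet.transport.density1}) supplies a factor $(\sum_j \pi_j/r_j)^{-(n-1)}$ which must then be reconciled against $q_1\cdots q_n$. \emph{Step 3.} Assemble: writing $dP_0(p) = \rho_0(p)\,d\mu_0(p)$, $dP_t(q) = \rho_t(q)\,d\mu_0(q)$, the change-of-variables formula gives $\rho_t(T_t(p)) = \rho_0(p)\left|\det DT_t\right|^{-1}$ where the Jacobian is taken with respect to $\mu_0$ on both sides; converting to Lebesgue-chart Jacobians using the densities of $\mu_0$ on domain and target produces exactly the stated factors $\frac{\pi_1\cdots\pi_n}{\det(\tilde L_t(r))}\cdot\frac{1}{r_1^2\cdots r_n^2}$.

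The main obstacle I anticipate is Step 2: correctly carrying out the $(n-1)$-dimensional determinant computation while keeping track of (i) the dropped coordinate $r_n$ and the induced corrections to the gradient and Hessian when passing between the $n$-dimensional and $(n-1)$-dimensional pictures, and (ii) the two nested normalizations (the one hidden in $\boldsymbol{\pi}_t$ via $\nabla_{e_i-r}$, and the outer one normalizing $\pi_i/r_i$ to get $q$). A clean route is to first compute the Jacobian of the \emph{unnormalized} map $r \mapsto (\pi_i(r)/r_i)_{i\in[n]}$ as a map into $(0,\infty)^n$ restricted to the hyperplane, and separately the Jacobian of normalization, the way Lemma \ref{lem:Dirichlet.transport.density1} handles the gamma variables; the matrix determinant lemma then does most of the work in both pieces. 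A useful sanity check along the way is the special case $\varphi_t = \varphi_0$ (i.e.\ $t=0$, or $\varphi_1 = \varphi_0$): there $\boldsymbol{\pi}_t \equiv \overline e$, $T_t = \mathrm{id}$, $q = p$, and one must get $\tilde L_0(r)$ such that $\pi_1\cdots\pi_n/(\det\tilde L_0(r)\, r_1^2\cdots r_n^2) = 1$, which pins down the normalization constants and confirms the bookkeeping before inserting a general $\varphi_t$.
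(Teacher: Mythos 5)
Your plan is correct and matches the paper's proof in all essentials: the paper likewise applies the change-of-variables formula (converting between $\mu_0$- and Lebesgue-densities, which produces the factor $q_1\cdots q_n/(p_1\cdots p_n)$), factors the transport map as $p \mapsto r=p^{-1} \mapsto u \mapsto q$ with $u$ an unnormalized ratio vector, and evaluates each Jacobian via the matrix determinant lemma, the middle one yielding $\det(\tilde L_t(r))$ up to powers of $r_n$ and $(\boldsymbol{\pi}_t(r))_n$. Your observation that $\mu_0$ is $\odot$-Haar and inversion-invariant, and your $\varphi_0$ sanity check (which indeed gives $\det\tilde L_0(r)=n^{-n}(r_1\cdots r_n)^{-2}$), are consistent with the paper's explicit computations.
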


\begin{rmk}
	The bold font $\boldsymbol{\pi}_t$ is reserved for the portfolio map at time $t$, while the usual $\pi_i$ refers to the $i$th coordinate of the vector $\pi$ (see Notation \ref{not:portfolio.maps}). Strictly speaking, we should write $\pi_i$ as $(\boldsymbol{\pi}_t(r))_i$. This makes \eqref{eqn:Monge.Ampere} overly cumbersome, which is why we have chosen this notational abuse. 
\end{rmk}

\begin{rmk}
	The regularity of the Dirichlet transport, which is closely related to the equation \eqref{eqn:Monge.Ampere}, was studied in the recent papers \cite{KZ19, WY19}. Specifically, it was shown that the Ma-Trudinger-Wang tensor associated with the cost function \eqref{eqn:cost.function} vanishes identically. As a consequence, the transport maps $\{T_t\}_{0 \leq t \leq 1}$ of the displacement interpolation are smooth under suitable conditions on $P_0$ and $P_1$. 
\end{rmk}

The proof of Theorem \ref{thm:Monge.Ampere} will make use of the following lemmas.

\begin{lemma} \label{lem:change.variables}
	Let $p \in \Delta_n$, $r = p^{-1}$ and $q = T_t(p)$. Let $u \in (0, \infty)^{n-1}$ be the vector defined by
	\begin{equation} \label{eqn:r.to.u}
	u =  {\bf 1} + \frac{\nabla \tilde{\varphi}_t(\tilde{r})}{1 - \tilde{r}^{\top} \nabla \tilde{\varphi}_t(\tilde{r})},
	\end{equation}
	where ${\bf 1} = (1, \ldots, 1)^{\top}$ is the vector of all ones. Then
	\begin{equation} \label{eqn:u.to.q}
	q_i = \frac{u_i}{1 + \sum_{j = 1}^{n-1} u_j}, \quad i = 1, \ldots, n - 1.
	\end{equation}
\end{lemma}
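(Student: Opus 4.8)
The plan is to combine the ``weight--ratio'' form \eqref{eqn:q.as.weight.ratio} of the transport map with the definition \eqref{eqn:portfolio.map} of the portfolio map, and then do all the bookkeeping in the coordinate chart $\tilde r = (r_1,\dots,r_{n-1}) \in \mathcal{D}_{n-1}$. Under Assumption \ref{ass:interpolation.assumptions} the function $\varphi_t = (1-t)\varphi_0 + t\varphi_1$ is $C^2$ on $\Delta_n$, so all the directional derivatives below exist classically. First I would record that, since $q = T_t(p) = p \odot \boldsymbol{\pi}_t(r)$ with $r = p^{-1}$ (see \eqref{eqn:transport.time.t}), formula \eqref{eqn:q.as.weight.ratio} gives
\[
q_i = \frac{(\boldsymbol{\pi}_t(r))_i/r_i}{\sum_{j=1}^{n}(\boldsymbol{\pi}_t(r))_j/r_j}, \qquad i \in [n],
\]
while \eqref{eqn:portfolio.map} gives $(\boldsymbol{\pi}_t(r))_i/r_i = 1 + \nabla_{e_i - r}\varphi_t(r)$.

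The heart of the argument is to rewrite these directional derivatives through the parameterization $\varphi_t(r) = \tilde\varphi_t(\tilde r)$, with $r_n = 1 - \sum_{j<n} r_j$. Moving $r$ along the tangent vector $e_i - r$ moves $\tilde r$ along $\hat e_i - \tilde r$ when $i \le n-1$ (here $\hat e_i$ denotes the $i$-th standard basis vector of $\mathbb{R}^{n-1}$) and along $-\tilde r$ when $i = n$, so by the chain rule
\[
\nabla_{e_i - r}\varphi_t(r) = \partial_i \tilde\varphi_t(\tilde r) - \tilde r^{\top}\nabla\tilde\varphi_t(\tilde r)\quad (i \le n-1), \qquad \nabla_{e_n - r}\varphi_t(r) = -\,\tilde r^{\top}\nabla\tilde\varphi_t(\tilde r).
\]
Setting $A := 1 - \tilde r^{\top}\nabla\tilde\varphi_t(\tilde r)$ and comparing with \eqref{eqn:r.to.u}, this says precisely that $(\boldsymbol{\pi}_t(r))_i/r_i = A + \partial_i\tilde\varphi_t(\tilde r) = A\,u_i$ for $i \le n-1$, and $(\boldsymbol{\pi}_t(r))_n/r_n = A$. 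Substituting into the weight--ratio formula, the common factor $A$ cancels from numerator and denominator and leaves $q_i = u_i / \bigl(1 + \sum_{j=1}^{n-1} u_j\bigr)$, which is exactly \eqref{eqn:u.to.q}.

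It remains to check that $u \in (0,\infty)^{n-1}$, i.e.\ that $A > 0$ (equivalently $A + \partial_i\tilde\varphi_t(\tilde r) > 0$ for each $i$). This is immediate from the two identities just derived, $A = (\boldsymbol{\pi}_t(r))_n/r_n$ and $A\,u_i = (\boldsymbol{\pi}_t(r))_i/r_i$: under Assumption \ref{ass:interpolation.assumptions} the vector $\boldsymbol{\pi}_t(r) = (1-t)\overline{e} + t\,\boldsymbol{\pi}_1(r)$ (see \eqref{eqn:pi.t}) lies in the open simplex, so all its coordinates are strictly positive, while $r \in \Delta_n$. I expect the only delicate point to be the chart computation in the middle step --- in particular getting the $i=n$ case right, where the basis direction is absent and only the $-\tilde r$ term survives, and being careful about which vector of $\mathbb{R}^{n-1}$ a simplex tangent vector projects to --- but this is routine, and no substantive difficulty is anticipated.
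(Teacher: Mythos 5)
Your proposal is correct and follows essentially the same route as the paper: both express the weight ratios $(\boldsymbol{\pi}_t(r))_i/r_i = 1+\nabla_{e_i-r}\varphi_t(r)$ in the chart $\tilde r$, identify the common factor $1-\tilde r^{\top}\nabla\tilde\varphi_t(\tilde r)$, and cancel it in the normalization \eqref{eqn:q.as.weight.ratio}. The only cosmetic difference is that the paper arrives at \eqref{eqn:u.to.q} by identifying $u_i=q_i/q_n$, while you substitute the identities $(\boldsymbol{\pi}_t(r))_i/r_i=Au_i$ directly (and add an explicit positivity check for $u$), which amounts to the same computation.
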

\begin{proof}
	Note that
	\[
	\frac{\partial}{\partial \tilde{r}_i} = \frac{\partial}{\partial r_i} - \frac{\partial}{\partial r_n}, \quad i = 1, \ldots, n - 1.
	\]
	Since
	\[
	e_i - r = (e_i - e_n) - \sum_{k = 1}^{n-1} r_k (e_k - e_n),
	\]
	we have
	\[
	\nabla_{e_i - r} \varphi_t(r) =
	\begin{dcases}
	\frac{\partial \tilde{\varphi}}{\partial \tilde{r}_i} (\tilde{r}) - \sum_{k = 1}^{n-1} \tilde{r}_k \frac{\partial \tilde{\varphi}}{\partial \tilde{r}_k}(\tilde{r}), & \text{for } i = 1, \ldots, n - 1,\\
	- \sum_{k = 1}^{n-1} \tilde{r}_k \frac{\partial \tilde{\varphi}}{\partial \tilde{r}_k}(\tilde{r}), & \text{for } i = n.
	\end{dcases}
	\]
	Consider the vector of weight ratios given by
	\[
	{\bf w}_t(r) = \left(\frac{(\boldsymbol{\pi}_t(r))_1}{r_1}, \ldots, \frac{(\boldsymbol{\pi}_t(r))_n}{r_n}\right)^{\top}.
	\]
	Since $\frac{(\boldsymbol{\pi}_t(r))_i}{r_i} = 1 + \nabla_{e_i - r} \varphi_t(r)$ by \eqref{eqn:portfolio.map}, we have
	\begin{equation} \label{eqn:weight.ratio.as.gradient}
	\left( ({\bf w}_t(r))_1, \ldots, ({\bf w}_t(r))_{n-1}\right)^{\top} = {\bf 1} + \nabla \tilde{\varphi}(\tilde{r}) - (\tilde{r}^{\top} \nabla \tilde{\varphi}(\tilde{r})) {\bf 1},
	\end{equation}
	and $({\bf w}_t(r))_n = 1 - \tilde{r}^{\top} \nabla \tilde{\varphi}(\tilde{r})$.
	
	We have $q = T_t(p) = p \odot \boldsymbol{\pi}_t(r)$. By \eqref{eqn:q.as.weight.ratio}, we have
	\[
	\frac{q_i}{q_n} = \frac{({\bf w}_t(r))_i}{({\bf w}_t(r))_n}, \quad i = 1, \ldots, n - 1.
	\]
	Let $u = (q_1/q_n, \ldots, q_{n-1}/q_n)^{\top}$. From \eqref{eqn:weight.ratio.as.gradient}, we have
	\[
	u = \frac{{\bf 1} + \nabla \tilde{\varphi}(\tilde{r}) - (\tilde{r}^{\top} \nabla \tilde{\varphi}(\tilde{r})) {\bf 1}}{1 - \tilde{r}^{\top} \nabla \tilde{\varphi}(\tilde{r})} = {\bf 1} + \frac{\nabla \tilde{\varphi}(\tilde{r})}{1 - \tilde{r}^{\top} \nabla \tilde{\varphi}(\tilde{r})}.
	\]
	Since $u_i = q_i/q_n$, we obtain \eqref{eqn:u.to.q} by a straightforward computation.
\end{proof}

We also recall the so-called matrix determinant lemma. A proof can be found in \cite[Lemma 1.1]{DZ07}.

\begin{lemma} [Matrix determinant lemma]
	Let $A$ be an invertible  $m \times m$ square matrix and $u, v \in \mathbb{R}^m$ be column vectors. Then
	\[
	\mathrm{det}(A + uv^{\top}) = (1 + v^{\top} A^{-1} u) \mathrm{det}(A).
	\]
\end{lemma}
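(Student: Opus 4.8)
The plan is to exhibit both sides of the identity as two evaluations of the determinant of a single bordered $(m+1)\times(m+1)$ matrix, which keeps the proof short and, unlike an eigenvalue argument, requires no case distinction on the invertibility or diagonalizability of $A+uv^{\top}$. Concretely, I would set
\[
B := \begin{pmatrix} A & u \\ -v^{\top} & 1 \end{pmatrix},
\]
with lower-right entry the scalar $1$, and compute $\det B$ in two ways using block Gaussian elimination together with the fact that a block upper-triangular matrix with square diagonal blocks has determinant equal to the product of the determinants of those blocks.

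First, multiplying $B$ on the right by the unit lower-triangular matrix $\begin{pmatrix} I_m & 0 \\ v^{\top} & 1 \end{pmatrix}$ (which has determinant $1$) clears the bottom-left block:
\[
B \begin{pmatrix} I_m & 0 \\ v^{\top} & 1 \end{pmatrix} = \begin{pmatrix} A + uv^{\top} & u \\ 0 & 1 \end{pmatrix},
\]
so that $\det B = \det(A + uv^{\top})$. Second, using the invertibility of $A$, multiplying $B$ on the left by the unit lower-triangular matrix $\begin{pmatrix} I_m & 0 \\ v^{\top}A^{-1} & 1 \end{pmatrix}$ (also of determinant $1$) gives
\[
\begin{pmatrix} I_m & 0 \\ v^{\top}A^{-1} & 1 \end{pmatrix} B = \begin{pmatrix} A & u \\ 0 & 1 + v^{\top}A^{-1}u \end{pmatrix},
\]
so that $\det B = \det(A)\,(1 + v^{\top}A^{-1}u)$. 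Equating the two expressions for $\det B$ yields $\det(A + uv^{\top}) = (1 + v^{\top}A^{-1}u)\,\det(A)$, which is the claim.

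There is no genuine obstacle here: the whole argument reduces to two $2\times 2$ block-matrix multiplications, the observation that the two bordering matrices are triangular with unit diagonal (hence have determinant $1$), and one application of the block-triangular determinant formula, so the only care needed is dimensional bookkeeping. The single point worth flagging is organizational — one should resist the temptation to first reduce to $A = I_m$ via $\det(A+uv^{\top}) = \det(A)\det(I_m + A^{-1}u\,v^{\top})$ and then argue through the eigenvalues of the rank-one perturbation $I_m + A^{-1}u\,v^{\top}$, since that route forces a separate treatment of the degenerate case $A^{-1}u = 0$; the bordered-matrix computation sidesteps this entirely, which is why I would present it as the main proof.
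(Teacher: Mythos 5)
Your proof is correct: both block multiplications check out, the two bordering matrices are unit triangular and hence have determinant $1$, and the block upper-triangular determinant formula gives $\det B = \det(A+uv^{\top})$ on one hand and $\det B = \det(A)\,(1+v^{\top}A^{-1}u)$ on the other, which is exactly the claim. Note, however, that the paper does not prove this lemma at all --- it is quoted as a known fact with a pointer to the literature (Lemma 1.1 of the cited reference \cite{DZ07}) --- so there is no in-paper argument to compare yours against. What you have supplied is the standard self-contained Schur-complement/bordered-matrix proof, and it is a perfectly good way to make the statement verifiable on the spot; your side remark about avoiding the eigenvalue route (which would need a separate word when $A^{-1}u=0$) is accurate but inessential, since that route also works with the extra case noted. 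The only thing I would trim is the length of the commentary relative to the two-line computation it accompanies.
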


\begin{lemma} \label{lem:det.Jt}
	Let $J_t(r)$ be the Jacobian matrix of the transformation $\tilde{r} \mapsto u$ \eqref{eqn:r.to.u}. Then
	\begin{equation} \label{eqn:Jt.det}
	|\mathrm{det}(J_t(r))| = \frac{r^n_n}{(\boldsymbol{\pi}_t(r))_n^n} \mathrm{det} (\tilde{L}_t(r)).
	\end{equation}
	It follows that the map $\tilde{r} \mapsto u$ (and hence the transport map $T_t$) is a $C^1$-diffeomorphism.
\end{lemma}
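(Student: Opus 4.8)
The plan is to differentiate the map \eqref{eqn:r.to.u} explicitly and then recognize the Jacobian as a product of two matrices, one of which is $\tilde{L}_t(r)$ up to sign, so that the determinant factors cleanly. Write $g := \nabla \tilde{\varphi}_t(\tilde{r})$, $H := \nabla^2 \tilde{\varphi}_t(\tilde{r})$ (a symmetric $(n-1)\times(n-1)$ matrix), and $s := 1 - \tilde{r}^{\top} g$, so that \eqref{eqn:r.to.u} reads $u = {\bf 1} + g/s$; by the formulas obtained in the proof of Lemma \ref{lem:change.variables} we have $s = ({\bf w}_t(r))_n = (\boldsymbol{\pi}_t(r))_n / r_n$, which is strictly positive under Assumption \ref{ass:interpolation.assumptions}, so $r_n^n / (\boldsymbol{\pi}_t(r))_n^n = s^{-n}$. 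Also $\tilde{L}_t(r) = -H - gg^{\top}$. First I would compute, using $\partial g_i / \partial \tilde{r}_j = H_{ij}$ and $\nabla s = -g - H\tilde{r}$, that
\[
J_t(r) = \frac{1}{s}\, H + \frac{1}{s^2}\, g\, (g + H\tilde{r})^{\top}.
\]

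The key step is the factorization
\[
J_t(r) = \frac{1}{s^2}\, (sI + g\tilde{r}^{\top})\, (H + gg^{\top}),
\]
which I would verify by expanding the right-hand side: $(sI + g\tilde{r}^{\top})(H + gg^{\top}) = sH + g\tilde{r}^{\top}H + s\, gg^{\top} + g(\tilde{r}^{\top}g)g^{\top}$, and since $\tilde{r}^{\top}g = 1 - s$ the last two terms collapse to $gg^{\top}$, giving $s^2 J_t(r)$ after comparison with the previous display. Taking determinants, the matrix determinant lemma gives $\det(sI + g\tilde{r}^{\top}) = s^{n-1}(1 + s^{-1}\tilde{r}^{\top}g) = s^{n-1}\cdot s^{-1} = s^{n-2}$, while $\det(H + gg^{\top}) = (-1)^{n-1}\det \tilde{L}_t(r)$ because $\tilde{L}_t(r) = -(H + gg^{\top})$ has size $n-1$. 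Hence
\[
\det J_t(r) = \frac{1}{s^{2(n-1)}}\cdot s^{n-2}\cdot (-1)^{n-1}\det \tilde{L}_t(r) = \frac{(-1)^{n-1}}{s^n}\det \tilde{L}_t(r).
\]
Since $\tilde{L}_t(r)$ is positive definite its determinant is positive, and $s > 0$, so taking absolute values and substituting $s^{-n} = r_n^n/(\boldsymbol{\pi}_t(r))_n^n$ yields \eqref{eqn:Jt.det}.

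For the diffeomorphism claim, $\det J_t(r) \neq 0$ everywhere on $\mathcal{D}_{n-1}$, so the inverse function theorem shows $\tilde{r} \mapsto u$ is a local $C^1$-diffeomorphism; combined with the global injectivity of the transport map $T_t$ (which follows from Lemma \ref{lem:diffeo} applied to $\varphi_t$, noting that $\tilde L_t$ is positive definite by Assumption \ref{ass:interpolation.assumptions}, or directly from $c$-cyclical monotonicity), it is a $C^1$-diffeomorphism onto its open image, and the same then holds for $T_t$ since $r \mapsto \tilde{r}$ and $u \mapsto q$ are themselves coordinate diffeomorphisms.

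The main obstacle I anticipate is arriving at the factorization $J_t(r) = s^{-2}(sI + g\tilde{r}^{\top})(H + gg^{\top})$ without inadvertently assuming that $H = \nabla^2 \tilde{\varphi}_t(\tilde{r})$ is invertible: applying the matrix determinant lemma directly to the first expression for $J_t(r)$ would require $H^{-1}$, which need not exist, whereas the product form circumvents this entirely. A secondary, more routine point is confirming that $(\boldsymbol{\pi}_t(r))_n > 0$ (equivalently $s > 0$) under Assumption \ref{ass:interpolation.assumptions}, which is needed both for the formula to be meaningful and for the sign bookkeeping above.
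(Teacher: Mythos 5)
Your proof is correct, and it follows the paper's overall strategy (differentiate \eqref{eqn:r.to.u} explicitly, then use rank-one determinant identities), but the key algebraic step is organized differently. The paper applies the matrix determinant lemma twice --- once to $J_t(r) = \frac{1}{w_n}\nabla^2\tilde\varphi_t + \frac{1}{w_n^2}\nabla\tilde\varphi_t(\nabla^2\tilde\varphi_t\,\tilde r + \nabla\tilde\varphi_t)^{\top}$ with $A = \frac{1}{w_n}\nabla^2\tilde\varphi_t$, and once to $\tilde L_t$ --- and then divides, so it needs $(\nabla^2\tilde\varphi_t)^{-1}$; your factorization $s^2 J_t(r) = (sI + g\tilde r^{\top})(H + gg^{\top})$ lets the determinant split without inverting $H$, and the two rank-one factors are handled with $A = sI$ and by the sign relation $H + gg^{\top} = -\tilde L_t$. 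The bookkeeping checks out: $\det(sI + g\tilde r^{\top}) = s^{n-2}$ using $s + \tilde r^{\top}g = 1$, giving $\det J_t = (-1)^{n-1}s^{-n}\det\tilde L_t$, which agrees with the paper's conclusion after taking absolute values. One small caveat on your anticipated obstacle: under the standing assumptions the paper's route is in fact legitimate, since $-\nabla^2\tilde\varphi_t = \tilde L_t + gg^{\top}$ is positive definite (hence invertible) whenever $\tilde L_t \succ 0$; your factorization merely spares you from having to observe this, which is a modest but genuine simplification. For the diffeomorphism claim you invoke Lemma \ref{lem:diffeo} (or $c$-cyclical monotonicity) for injectivity plus the inverse function theorem, whereas the paper cites the proof of \cite[Proposition 2.9]{W17} for the map $\tilde r \mapsto u$ being $C^1$ and one-to-one; both are acceptable, though note that strict positive definiteness of $\tilde L_t$ for intermediate $t$ uses not only Assumption \ref{ass:interpolation.assumptions} on $\varphi_1$ but also that $L_0 \succ 0$ for $\varphi_0$ together with the L\"owner concavity of $t \mapsto \tilde L_t$, a point the paper also passes over lightly.
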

\begin{proof}
	Let $\partial_i = \frac{\partial}{\partial \tilde{r}_i}$ and similarly for the second derivatives. Writing down the components of \eqref{eqn:r.to.u} explicitly, we have
	\[
	u_i = 1 + \frac{\partial_i \tilde{\varphi}_t}{1 - \sum_{k = 1}^{n-1} \tilde{r}_k \partial_k \tilde{\varphi}_t}.
	\]
	For notational simplicity we write $\pi = \boldsymbol{\pi}_t(r)$, $w = \pi / r$ and suppress the argument $r$ throughout. Differentiating, we have
	\begin{equation*}
	\begin{split}
	\frac{\partial u_i}{\partial \tilde{r}_j} &= \frac{(1 - \sum_{k = 1}^{n-1} \tilde{r}_k \partial_k \tilde{\varphi}_t ) \partial_{ij} \tilde{\varphi} + \partial_i \tilde{\varphi} ( \sum_{k = 1}^{n-1} \tilde{r}_k \partial_{kj} \tilde{\varphi}_t + \partial_j \tilde{\varphi} )}{(1 - \sum_{k = 1}^{n-1} \tilde{r}_k \partial_k \tilde{\varphi}_t)^2}\\
	&= \frac{1}{w_n} \partial_{ij} \tilde{\varphi}_t + \frac{1}{w_n^2} \partial_i \tilde{\varphi} \left( \sum_{k = 1}^{n-1} \tilde{r}_k \partial_{kj} \tilde{\varphi}_t + \partial_j \tilde{\varphi}_t\right),
	\end{split}
	\end{equation*}
	where $w_n = 1 - \sum_{k = 1}^{n-1} \tilde{r}_k \partial_k \tilde{\varphi}_t$ as in the line after \eqref{eqn:weight.ratio.as.gradient}.
	
	Expressing the above in matrix form, we have
	\[
	J_t(r) = \frac{1}{w_n} \nabla^2 \tilde{\varphi}_t + \frac{1}{w_n^2} \nabla \tilde{\varphi}_t ( \nabla^2 \tilde{\varphi}_t \tilde{r} + \nabla \tilde{\varphi}_t)^{\top}.
	\]
	Now we apply the matrix determinant lemma (with $A = \frac{1}{w_n} \nabla^2 \tilde{\varphi}_t$, $u = \frac{1}{w_n} \nabla \tilde{\varphi}_t$ and $v = \frac{1}{w_n} ( \nabla^2 \tilde{\varphi}_t \tilde{r} + \nabla \tilde{\varphi}_t)$) to get
	\begin{equation} \label{eqn:matrix.det.apply1}
	\begin{split}
	\mathrm{det}(J_t(r)) &= \frac{1}{w_n^{n-1}} \left(1 + \frac{1}{w_n} (\tilde{r}^{\top} \nabla^2 \tilde{\varphi}_t + \nabla \tilde{\varphi}_t^{\top}) (\nabla^2 \tilde{\varphi}_t)^{-1} (\nabla \tilde{\varphi}_t) \right) \mathrm{det} (\nabla^2 \tilde{\varphi}) \\
	&= \frac{1}{w_n^n} \left(1 + (\nabla \tilde{\varphi}_t)^{\top} (\nabla^2 \tilde{\varphi}_t)^{-1} (\nabla \tilde{\varphi}_t)\right) \mathrm{det} (\nabla^2 \tilde{\varphi}).
	\end{split}
	\end{equation}
	In the last equality we used the identity $w_n = 1 - \tilde{r}^{\top} \nabla \tilde{\varphi}_t$.
	
	On the other hand, again by the matrix determinant lemma, we have
	\begin{equation*}
	\begin{split}
	\mathrm{det}(\tilde{L}_t(r)) &= \mathrm{det} \left( - \nabla^2 \tilde{\varphi}_t - (\nabla \tilde{\varphi}_t)(\nabla \tilde{\varphi}_t)^{\top} \right) \\
	&= \left(1 + (\nabla \tilde{\varphi}_t)^{\top} (\nabla^2 \tilde{\varphi}_t)^{-1} (\nabla \tilde{\varphi}_t) \right)  \mathrm{det} (-\nabla^2 \tilde{\varphi}).
	\end{split}
	\end{equation*}
	Plugging this into \eqref{eqn:matrix.det.apply1} gives the formula \eqref{eqn:Jt.det}.
	
	From the proof of \cite[Proposition 2.9]{W17}, we have that the map $\tilde{r} \mapsto u$ is $C^1$ and one-to-one. By Assumption \ref{ass:interpolation.assumptions}, the Jacobian determinant is everywhere non-zero. Thus, by the inverse function theorem, the map $\tilde{r} \mapsto u$ is a $C^1$-diffeomorphism.
\end{proof}

\begin{proof}[Proof of Theorem \ref{thm:Monge.Ampere}]
	Consider the transformation $p \mapsto q = T_t(p)$ which is a $C^1$-diffeomorhpism by Lemma \ref{lem:det.Jt}. By the change of variables formula, we have
	\begin{equation} \label{eqn:Monge.Ampere.computing}
	\rho_t(q) = \rho_0(p) \frac{q_1 \cdots q_n}{p_1 \cdots p_n} \frac{1}{\left| \frac{\partial (q_1, \ldots, q_{n-1})}{\partial (p_1, \ldots, p_{n-1})} \right|}, \quad q = T_t(p).
	\end{equation}
	
	It remains to find the Jacobian determinant of the transformation $p \mapsto q$. Using the notations of Lemma \ref{lem:change.variables}, the transport map can be written as the composition
	\begin{equation} \label{eqn:compositions}
	p \mapsto r = p^{-1} \mapsto u \mapsto q.
	\end{equation}
	Thus we can express the Jacobian determinant as a product.
	
	First we consider $p \mapsto r$. Since
	\[
	r_i = \frac{1/p_i}{\sum_{j = 1}^n 1/p_j},
	\]
	for $1 \leq i, j \leq n - 1$ we have
	\[
	\frac{\partial r_i}{\partial p_j} = \frac{-r_i}{p_i} \delta_{ij} + \frac{1}{p_i} (r_j^2 - r_n^2).
	\]
	By the matrix determinant lemma, we have after some computations
	\begin{equation} \label{eqn:determinant1}
	\left| \frac{\partial (r_1, \ldots, r_{n-1})}{\partial (p_1, \ldots, p_{n-1})}\right| = \frac{r_1 \cdots r_n}{p_1 \cdots p_n}.
	\end{equation}
	
	The Jacobian determinant of the transformation $r \mapsto u$ has been computed in Lemma \ref{lem:det.Jt}.
	
	Finally, it is easy to show that
	\begin{equation} \label{eqn:determinant2}
	\frac{\partial (q_1, \ldots, q_{n-1})}{\partial (u_1, \ldots, u_{n-1})} = q_n^n.
	\end{equation}
	
	Combining \eqref{eqn:determinant1}, Lemma \ref{lem:det.Jt} and \eqref{eqn:determinant2}, we have
	\[
	\left| \frac{\partial (q_1, \ldots, q_{n-1})}{\partial (p_1, \ldots, p_{n-1})} \right| = \frac{r_1 \cdots r_n}{p_1 \cdots p_n} \frac{r^n}{\pi_n^n} \mathrm{det} (\tilde{L}_t(r)) q_n^n.
	\]
	Plugging this into \eqref{eqn:Monge.Ampere.computing}, we get
	\begin{equation*}
	\begin{split}
	\rho_t(q) &= \rho_0(p) \frac{q_1 \cdots q_n}{r_1 \cdots r_n} \frac{\pi_n^n / r_n^n}{\mathrm{det} (\tilde{L}_t(r)) q_n^n}.
	\end{split}
	\end{equation*}
	Since $q_i = (\pi_i/r_i) / \sum_{j = 1}^n (\pi_j/r_j)$ by \eqref{eqn:q.as.weight.ratio}, simplifying gives the desired formula \eqref{eqn:Monge.Ampere}.
\end{proof}

\subsection{Proof of Theorem \ref{thm:entropy}}
Consider the entropy
\[
\mathrm{Ent}_{\mu_0}(P_t) = \int \log \frac{dP_t}{d\mu_0}(q) d P_t(q).
\]
Using the Monge-Amp\`{e}re equation \eqref{eqn:Monge.Ampere}, we have
\begin{equation*}
\begin{split}
\mathrm{Ent}_{\mu_0}(P_t) &= \int \log \rho_t(T_t(p)) dP_0(p) \\
&= \int \log \left( \rho_0(p) \frac{\pi_1 \cdots \pi_n}{\mathrm{det} (\tilde{L}_t(r))} \frac{1}{r_1^2 \cdots r_n^2} \right) dP_0,
\end{split}
\end{equation*}
where $\pi = \boldsymbol{\pi}_t(r) = \boldsymbol{\pi}_t(p^{-1})$. It follows that $\mathrm{Ent}_{\mu_0}(P_t)$ equals
\[
\mathrm{Ent}_{\mu_0} (P_0) + \int \sum_{i = 1}^n \log \pi_i dP_0 - \int \log \mathrm{det} (\tilde{L}_t(r)) d P_0
\]
plus a constant which does not depend on $t$.

On the other hand, since by Lemma \ref{lem:cost.as.entropy}
\[
c(p, q) = H\left( \overline{e} \mid \pi \right) = \sum_{i = 1}^n \frac{1}{n} \log \frac{1/n}{\pi_i},
\]
we have
\[
n \mathbf{C}(P_0, P_t) = n \log \frac{1}{n} - \int \sum_{i = 1}^n \log \pi_i d P_0.
\]
Thus
\[
\mathrm{Ent}_{\mu_0}(P_t) + n \mathbf{C}(P_0, P_t) = K -  \int \log \mathrm{det}(\tilde{L}_t(r)) dP_0(p).
\]
for some constant $K$, and the convexity of $t \mapsto \mathrm{Ent}_{\mu_0}(P_t) + n \mathbf{C}(P_0, P_t)$ is equivalent to that of
\[
t \mapsto -  \int \log \mathrm{det}(\tilde{L}_t(r)) dP_0(p).
\]

Recall the L\"owner order on the cone of positive semidefinite matrices where $A \preceq B$ if $B-A$ is positive semidefinite and $A\prec B$ if $B-A$ is positive definite.

\begin{lemma} \label{eqn:Lt.concave}
	For any $r \in \Delta_n$ fixed, the map $t \mapsto \tilde{L}_t(r)$ is concave in the L\"{o}wner order, i.e., if $t = (1 - \alpha) t_1 + \alpha t_2$ and $\alpha \in [0, 1]$, then
	\[
	\tilde{L}_t(r) - \left[ (1 - \alpha) \tilde{L}_{t_1}(r) + \alpha \tilde{L}_{t_2}(r)\right]
	\]
	is positive semidefinite.
\end{lemma}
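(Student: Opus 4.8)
The plan is to exploit the fact that the interpolation is linear: since $\varphi_t=(1-t)\varphi_0+t\varphi_1$, the restricted function $\tilde\varphi_t=(1-t)\tilde\varphi_0+t\tilde\varphi_1$ is \emph{affine} in $t$, and consequently both $\nabla\tilde\varphi_t(\tilde r)$ and $\nabla^2\tilde\varphi_t(\tilde r)$ depend affinely on $t$ for each fixed $\tilde r\in\mathcal{D}_{n-1}$. In the expression $\tilde L_t(r)=-\nabla^2\tilde\varphi_t(\tilde r)-(\nabla\tilde\varphi_t(\tilde r))(\nabla\tilde\varphi_t(\tilde r))^{\top}$, the first term is therefore affine in $t$ (hence both Löwner-concave and Löwner-convex, contributing neither for nor against concavity), and the only genuine nonlinearity is the rank-one term, which is a matrix-valued quadratic polynomial in $t$. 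So the whole lemma reduces to the matrix analogue of the elementary fact that minus the square of an affine function is concave.

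Concretely, I would fix $r$ (equivalently $\tilde r\in\mathcal{D}_{n-1}$) and set $v(t):=\nabla\tilde\varphi_t(\tilde r)=a+tb$, where $a:=\nabla\tilde\varphi_0(\tilde r)$ and $b:=\nabla\tilde\varphi_1(\tilde r)-\nabla\tilde\varphi_0(\tilde r)$. For $t=(1-\alpha)t_1+\alpha t_2$ with $\alpha\in[0,1]$, affineness of $v$ gives $v(t)=(1-\alpha)v(t_1)+\alpha v(t_2)$, and a direct expansion yields the identity
\[
(1-\alpha)\,v(t_1)v(t_1)^{\top}+\alpha\,v(t_2)v(t_2)^{\top}-v(t)v(t)^{\top}=\alpha(1-\alpha)\,\bigl(v(t_1)-v(t_2)\bigr)\bigl(v(t_1)-v(t_2)\bigr)^{\top}\succeq 0,
\]
which is exactly Löwner-concavity of $t\mapsto -v(t)v(t)^{\top}$. (Equivalently, $-v(t)v(t)^{\top}=-aa^{\top}-t(ab^{\top}+ba^{\top})-t^2bb^{\top}$ has constant second $t$-derivative $-2bb^{\top}\preceq 0$.) Adding the affine piece $-\nabla^2\tilde\varphi_t(\tilde r)=-(1-t)\nabla^2\tilde\varphi_0(\tilde r)-t\nabla^2\tilde\varphi_1(\tilde r)$ preserves concavity, so $t\mapsto\tilde L_t(r)$ is concave in the Löwner order, which is the claim.

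There is no real obstacle here: the argument is purely algebraic and uses only the linearity of the interpolation $\varphi_t=(1-t)\varphi_0+t\varphi_1$ together with Assumption~\ref{ass:interpolation.assumptions} (which guarantees that $\tilde\varphi_0,\tilde\varphi_1$ are $C^2$ so the derivatives above make sense). The only point requiring a little care is the bookkeeping between the ambient gradient and Hessian on $\Delta_n$ and their chart expressions $\nabla\tilde\varphi_t,\nabla^2\tilde\varphi_t$ used in the definition of $\tilde L_t$; but since passing to the chart $\tilde r$ and forming the convex combination in $t$ commute, this introduces no difficulty, and one may simply work in the chart throughout.
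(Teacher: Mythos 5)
Your proof is correct and follows essentially the same route as the paper: since the Hessian term of $\tilde{L}_t$ is affine in $t$, the whole difference reduces to the rank-one outer-product term, whose concavity defect is the positive semidefinite matrix $\alpha(1-\alpha)\bigl(\nabla\tilde{\varphi}_{t_1}-\nabla\tilde{\varphi}_{t_2}\bigr)\bigl(\nabla\tilde{\varphi}_{t_1}-\nabla\tilde{\varphi}_{t_2}\bigr)^{\top}$. (Your identity in fact has the correct sign; the paper's displayed factorization shows the \emph{sum} of the two gradients where it should show their difference, a typo that does not affect positive semidefiniteness or the conclusion.)
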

\begin{proof}
	Since $\tilde{\varphi}_t = (1 - \alpha) \tilde{\varphi}_{t_1} + \alpha \tilde{\varphi}_{t_2}$, we have
	\begin{equation*}
	\begin{split}
	& \tilde{L}_t(r) - \left[ (1 - \alpha) \tilde{L}_{t_1}(r) + \alpha \tilde{L}_{t_2}(r) \right] \\
	&= \alpha(1 - \alpha) \left[ (\nabla \tilde{\varphi}_{t_1})(\nabla \tilde{\varphi}_{t_1})^{\top} + (\nabla \tilde{\varphi}_{t_1}) (\nabla \tilde{\varphi}_{t_2})^{\top} +\right.  \\
	&\quad \quad \quad \quad \quad \quad \left. (\nabla \tilde{\varphi}_{t_2}) (\nabla \tilde{\varphi}_{t_1})^{\top} +  (\nabla \tilde{\varphi}_{t_2}) (\nabla \tilde{\varphi}_{t_2})^{\top} \right] \\
	&= \alpha (1 - \alpha) (  \nabla \tilde{\varphi}_{t_1} + \nabla \tilde{\varphi}_{t_2}) (  \nabla \tilde{\varphi}_{t_1} + \nabla \tilde{\varphi}_{t_2})^{\top},
	\end{split}
	\end{equation*}
	which is clearly positive semidefinite.
\end{proof}

By the previous lemma the map $t \mapsto \tilde{L}_t(z)$ is concave in the L\"owner order. The map $A \mapsto (\mathrm{det}(A))^{1/(n-1)}$ is non-decreasing in the L\"owner order on the space of positive semidefinite matrices by the Minkowski determinant inequality (see \cite[Theorem 7.8.8, page 482]{horn1990}). Also, it is a well-known fact (see for example \cite[Theorem 17.9.1]{CT06}) that $-\log \mathrm{det} (\cdot)$ is a convex function of positive semidefinite matrices. Combining, $-\log \mathrm{det} (\cdot)$ is a non-increasing convex function in the L\"owner order. By Lemma \ref{eqn:Lt.concave}, $t \mapsto - \log \mathrm{det}(\tilde{L}_t(r))$ is convex in $t$ and the theorem is proved.

\section{Dimension-free bounds of the transport cost} \label{sec:bounds}
As a consequence of ths structure of our Lagrangian, in this final section we derive, under suitable conditions, upper bounds of the transport cost $\mathbf{C}(P, Q)$ that do not depend explicitly on the dimension $n$ (or, rather, $n - 1$, of the simplex $\Delta_n$). 
There are few models of sequentially generating random elements from the unit simplices of increasing dimension that satisfy some natural consistency condition. One natural model is to take i.i.d. positive random variables $X_1, X_2, \ldots, X_n$ and divide each coordinate by the total sum $S=X_1+ \ldots + X_n$ to get a random vector $(X_1/S, \ldots, X_n/S)$ in $\Delta_n$. For example, the uniform distribution on $\Delta_n$ occurs this way when we take each $X_i$ to be exponential with rate one. However, we do not know how to analyze this model sequence.


Instead we take the following related model of generating a random element from the simplex $\Delta_n$. 
Consider i.i.d. continuous random variables on the unit interval $(0, 1)$, say $U_1, \ldots, U_{n-1}$. Arrange them in increasing order:
\[
U_{(0)}=0 < U_{(1)} < U_{(2)} < \cdots < U_{(n-1)} < U_{(n)}=1.
\]
Consider the gaps between the order statistics, i.e.,
\begin{equation} \label{eqn:gaps}
p_i = U_{(i)} - U_{(i-1)}, \quad i = 1, \ldots, n,
\end{equation}
where by convention $u_{(0)} := 0$ and $u_{(n)} := 1$. Then $p = (p_1, \ldots, p_n) \in \Delta_n$. For instance, we can again recover the uniform distribution on $\Delta_n$ if we let $U_1, \ldots, U_{n-1}$ to be i.i.d.~uniform random variables on $(0, 1)$. Moreover, this construction is structurally aligned with the infinite dimensional version \eqref{eqn:dynamic.cost.inf}, an extension that should be further investigated.

Given $n \geq 2$, let $P_n \in \mathcal{P}(\Delta_n)$ denote the uniform distribution on $\Delta_n$. On the other hand, let $X_1, \ldots, X_{n-1}$ be i.i.d.~$[0, 1]$-valued random variables with distribution function $F$ that admits a continuous strictly positive density $f$. In particular, $F(0)=0$ and $F(1)=1$. Let $q = (q_1, \ldots, q_n)$ be the gaps between the order statistics of $X$, and let $Q_n$ denote its law in $\Delta_n$. A natural coupling of $P_n$ and $Q_n$ can be obtained by generating $U_1, \ldots, U_{n-1}$ whose gaps are distributed according to $P_n$ and defining $X_i=F^{-1}(U_i)$ for $1 \leq i \leq n - 1$. More explicitly, let $Z = (Z_1, \ldots, Z_n) \sim P_n$ be a random vector that is uniformly distributed on $\Delta_n$. Then we may write
\[
U_{(i)} = Z_1 + \cdots + Z_i, \quad 1 \leq i \leq n, \quad U_{(0)} = 0.
\]
The coupling is then given by
\begin{equation} \label{eqn:PnQn.coupling}
\begin{split}
q_i &= F^{-1}(U_{(i)}) - F^{-1}(U_{(i-1)}) \\
  &= F^{-1}(Z_1 + \cdots + Z_i) - F^{-1}(Z_1 + \cdots + Z_{i - 1}).
\end{split}
\end{equation}

\begin{remark}
By varying the distribution function $F$, the possible distributions $Q_n$ that the transform \eqref{eqn:PnQn.coupling} generates form a somewhat restricted subset of $\mathcal{P}(\Delta_n)$.\footnote{We thank an anonymous referee for pointing out this point.} For example, the coupling \eqref{eqn:PnQn.coupling} cannot generate a distribution $Q_n$ such that $q_1 \leq q_2  \leq \cdots \leq q_n$ almost surely. To see this, suppose $n = 3$. If $U_{(1)}$ is close to $0$ and $U_{(2)}$ is close to $1$ (which is possible as $Z$ is uniform), then by the continuity and strict monotonicity of $F^{-1}$, we have that $q_1$, $q_3$ are close to $0$ but $q_2$ is close to $1$. 

\end{remark}

Using this coupling (which is generally sub-optimal for the cost function $c$) we obtain the following dimension-free bound.

\begin{thm} \label{thm:second.bound}
	Suppose that the density $f$ of $F$ is continuous and strictly positive on $[0,1]$. Then
	\[
	\limsup_{n \rightarrow \infty} \cost\left(P_n, Q_n \right) \le H( F),
	\]
	where the right side is the Shannon entropy of the distribution function $F$ with respect to the Lebesgue measure:
	\[
	H(F) = -\int_0^1 f(u) \log f(u) du.
	\] 
\end{thm}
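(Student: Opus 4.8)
The plan is to bound $\cost(P_n,Q_n)$ from above by the expected cost of the explicit coupling described just before the statement, and then to let $n\to\infty$. Work on the probability space carrying the i.i.d.\ uniforms $U_1,\dots,U_{n-1}$ on $(0,1)$, with order statistics $0=U_{(0)}<U_{(1)}<\dots<U_{(n-1)}<U_{(n)}=1$ (distinct a.s.). Set $p_i=U_{(i)}-U_{(i-1)}$, so that $p=(p_1,\dots,p_n)\sim P_n$, and $X_i=F^{-1}(U_i)$, whose ordered values are $X_{(i)}=F^{-1}(U_{(i)})$; then $q_i=X_{(i)}-X_{(i-1)}$ gives $q=(q_1,\dots,q_n)\sim Q_n$. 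Since $(p,q)$ is a coupling of $P_n$ and $Q_n$, we have $\cost(P_n,Q_n)\le \E\,c(p,q)$, so it suffices to identify the limit of the right-hand side.

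The first step I would record is what the ratios $q_i/p_i$ look like. Because $f$ is continuous and strictly positive on $[0,1]$, $F$ is a $C^1$ increasing bijection of $[0,1]$ with $F(0)=0$, $F(1)=1$, and $F^{-1}$ is $C^1$ with $(F^{-1})'(u)=1/f(F^{-1}(u))$; by the mean value theorem applied to $F^{-1}$ on $[U_{(i-1)},U_{(i)}]$ there is $\xi_i\in(U_{(i-1)},U_{(i)}]$ with
\[
\frac{q_i}{p_i}=\frac{1}{h(\xi_i)},\qquad h:=f\circ F^{-1}.
\]
Since $F^{-1}$ maps $[0,1]$ into $[0,1]$ and $f$ is bounded away from $0$ and $\infty$ there, each ratio $q_i/p_i$ lies in a fixed compact subinterval of $(0,\infty)$ depending only on $f$; in particular $c(p,q)$ is bounded by a deterministic constant, so dominated convergence will apply and it is enough to find the a.s.\ limit of $c(p,q)$.

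Writing $c(p,q)=\log\big(\frac1n\sum_{i=1}^n h(\xi_i)^{-1}\big)+\frac1n\sum_{i=1}^n\log h(\xi_i)$, the second step is the claim that for every continuous $g$ on $[0,1]$ one has $\frac1n\sum_{i=1}^n g(\xi_i)\to\int_0^1 g(u)\,du$ a.s. This rests on two standard facts about uniform order statistics: (i) the maximal spacing tends to zero, $\max_i p_i\to 0$ a.s.\ (by Borel--Cantelli, since $\mathbb{P}(\max_i p_i>\varepsilon)\le n(1-\varepsilon)^{n-1}$ is summable in $n$), and (ii) $\frac1{n-1}\sum_j g(U_j)\to\int_0^1 g$ a.s.\ by the strong law of large numbers (equivalently, Glivenko--Cantelli gives $\sup_i|U_{(i)}-i/n|\to 0$). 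Indeed $|\xi_i-U_{(i)}|\le p_i\le\max_k p_k$, so uniform continuity of $g$ on $[0,1]$ shows that $\frac1n\sum_i g(\xi_i)$ differs by a vanishing amount from $\frac1n\sum_i g(U_{(i)})=\frac{n-1}{n}\cdot\frac1{n-1}\sum_j g(U_j)+\frac{g(1)}{n}$, and the latter converges to $\int_0^1 g$.

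Applying this with $g=1/h$ and with $g=\log h$ (both continuous on $[0,1]$ since $h$ takes values in a compact subinterval of $(0,\infty)$), and using the substitution $u=F(x)$ — which gives $\int_0^1 h(u)^{-1}\,du=\int_0^1 dx=1$ and $\int_0^1\log h(u)\,du=\int_0^1 f(x)\log f(x)\,dx$ — I obtain $c(p,q)\to\log 1+\int_0^1 f(x)\log f(x)\,dx$ a.s., and by dominated convergence the same limit holds for $\E\,c(p,q)$; this is $H(F)$, which completes the proof. The only point requiring real care is the equidistribution step — turning the average over the random tags $\xi_i$ into a genuine Riemann integral, for which one must control $\xi_i$ uniformly in $i$ through the maximal-spacing and Glivenko--Cantelli estimates above. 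The remaining ingredients (the mean value theorem, the deterministic boundedness of $q_i/p_i$, and the change of variables $u=F(x)$) are routine.
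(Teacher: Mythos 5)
Your argument is correct and is essentially the paper's own proof: both use the monotone coupling $X_i=F^{-1}(U_i)$, split $c(p,q)$ into the $\log$-of-average and average-of-$\log$ terms, and show the first tends to $0$ while the second tends to $\int_0^1 f(x)\log f(x)\,dx$. The paper does this by a direct $L^1$ argument (a modulus-of-continuity estimate for $G'=(F^{-1})'$ plus a count of large spacings), while you use the mean value theorem, a.s.\ equidistribution of the tags $\xi_i$ (maximal spacings plus the SLLN), and bounded convergence, which is legitimate since $q_i/p_i=G'(\xi_i)\in[1/M,1/m]$; this is a repackaging of the same idea rather than a different method.

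One point to flag: the limit you (correctly) compute is $\int_0^1 f\log f\,dx$, which is $-H(F)$ under the theorem's displayed definition $H(F)=-\int_0^1 f\log f\,du$. Since $c\ge 0$ and, by Jensen, $-\int_0^1 f\log f\le 0$ for a density on $[0,1]$, the inequality as literally printed could only hold when $f\equiv 1$; the intended right-hand side is $\int_0^1 f\log f\,dx$ (the paper's own final change-of-variables equality, $\int_0^1\log f(G(u))\,du=-\int_0^1 f\log f\,dx$, has the sign wrong and should read $+\int_0^1 f\log f\,dx$). So your closing identification ``this is $H(F)$'' inherits the paper's sign slip, but the analytic content of your proof, namely $\limsup_{n\to\infty}\mathbf{C}(P_n,Q_n)\le\int_0^1 f\log f\,dx$, is the correct statement and matches what the paper's proof actually establishes.
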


\begin{proof}
Since $F^{-1}$ is an increasing function, for the coupling given before the statement the order statistics are preserved. That is, $F^{-1}\left( U_{(i)}\right)=X_{(i)}$. Thus the cost of transport is given by
\begin{equation*}
\begin{split}
	c(p, q) &= \log\left( \frac{1}{n} \sum_{i=1}^n \frac{F^{-1}(U_{(i)}) F^{-1}(U_{i-1})}{U_{(i)} - U_{(i-1)}}\right)\\
	&\quad - \frac{1}{n}\sum_{i=1}^n \log\left(  \frac{F^{-1}(U_{(i)}) - F^{-1}(U_{i-1})}{U_{(i)} - U_{(i-1)}}\right),
\end{split}
\end{equation*}
where $U_{(0)}\equiv 0$ and $U_{(n)}\equiv 1$. The proof is completed by the following lemma. 
\end{proof}

\begin{lemma}\label{lem:convl1} The sequence of random variables 
	\[
	\log\left( \frac{1}{n} \sum_{i=1}^n \frac{F^{-1}(U_{(i)}) - F^{-1}(U_{i-1})}{U_{(i)} - U_{(i-1)}}\right), \quad n\ge 1,  
	\]
	converges to zero in $L^1$. The sequence of random variables
	\[
	-\frac{1}{n}\sum_{i=1}^n \log\left(  \frac{F^{-1}(U_{(i)}) - F^{-1}(U_{i-1})}{U_{(i)} - U_{(i-1)}}\right)
	\] 
	converges in $L^1$ to $\Ent(F)=-\int_0^1 f(u) \log f(u) du$. 
\end{lemma}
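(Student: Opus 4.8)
The plan is to reduce both assertions to a single elementary ``random Riemann sum'' statement, rewriting the ratios via the mean value theorem and invoking bounded convergence to pass from almost sure to $L^1$ convergence. Since $f$ is continuous and strictly positive on the compact interval $[0,1]$, there are constants $0<m\le M<\infty$ with $m\le f\le M$; then $F$ is a $C^1$ diffeomorphism of $[0,1]$ onto itself, and $(F^{-1})'(u)=1/f(F^{-1}(u))\in[1/M,1/m]$. Applying the mean value theorem on each interval $[U_{(i-1)},U_{(i)}]$ I would write
\[
R_i:=\frac{F^{-1}(U_{(i)})-F^{-1}(U_{(i-1)})}{U_{(i)}-U_{(i-1)}}=(F^{-1})'(\xi_i)=\frac{1}{f(\eta_i)},\qquad \eta_i:=F^{-1}(\xi_i),
\]
for some $\xi_i\in(U_{(i-1)},U_{(i)})$, so that $\eta_i$ lies in the $i$-th spacing interval $(X_{(i-1)},X_{(i)})$ of the $F$-distributed sample $X_j:=F^{-1}(U_j)$, with the conventions $X_{(0)}:=0$ and $X_{(n)}:=1$. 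In particular $R_i\in[1/M,1/m]$, so $\log R_i$ is bounded uniformly in $i$ and $n$; hence both sequences in the statement are uniformly bounded, and by bounded convergence it suffices to establish the two limits almost surely.

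The technical core is the claim: for every continuous $g\colon[0,1]\to\rr$,
\[
\frac1n\sum_{i=1}^n g(\eta_i)\ \longrightarrow\ \int_0^1 g(x)\,f(x)\,dx\qquad\text{almost surely.}
\]
I would prove this in three steps. First, the maximum spacing of the order statistics $X_{(1)},\dots,X_{(n-1)}$ tends to $0$ almost surely, being at most $1/m$ times the maximum spacing of $n-1$ i.i.d.\ uniform points, which is $O(n^{-1}\log n)$ a.s.\ by a routine Borel--Cantelli estimate; hence, by uniform continuity of $g$, $\sup_{1\le i\le n}\lvert g(\eta_i)-g(X_{(i)})\rvert\to0$. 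Second, since a finite sum is unchanged by reordering, $\frac1n\sum_{i=1}^n g(X_{(i)})=\frac{n-1}{n}\cdot\frac1{n-1}\sum_{j=1}^{n-1}g(X_j)+\frac1n g(1)$, and the strong law of large numbers gives $\frac1{n-1}\sum_{j=1}^{n-1}g(X_j)\to\mathbb{E}[g(X_1)]=\int_0^1 gf$. Combining the two displays yields the claim.

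Granting the claim, both limits follow at once. Taking $g=1/f$ (continuous, as $f$ is continuous and bounded away from $0$) gives $\frac1n\sum_{i=1}^n R_i=\frac1n\sum_{i=1}^n g(\eta_i)\to\int_0^1 1\,dx=1$, so $\log\!\big(\frac1n\sum_{i=1}^n R_i\big)\to\log 1=0$ almost surely, and in $L^1$ by the uniform bound of the first paragraph; this is the first assertion. Taking $g=\log f$ (continuous) gives
\[
-\frac1n\sum_{i=1}^n\log R_i=\frac1n\sum_{i=1}^n\log f(\eta_i)\ \longrightarrow\ \int_0^1 f(x)\log f(x)\,dx,
\]
the relative entropy $\Ent(F)$ of $F$ with respect to Lebesgue measure appearing in Theorem~\ref{thm:second.bound} (which may also be written $\int_0^1\log f(F^{-1}(u))\,du$ after the substitution $u=F(x)$); the uniform bound again upgrades this to $L^1$ convergence. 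I expect the only genuinely delicate point to be the core equidistribution claim, where the subtlety is that the $\eta_i$ are representative points of the \emph{order statistics} and are not uniformly placed, which is precisely why the argument must route through the symmetry of the sum and the SLLN for the i.i.d.\ sample rather than through a deterministic Riemann-sum comparison.
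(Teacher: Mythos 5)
Your proof is correct, and at its core it follows the same route as the paper: compare the difference quotients of $G=F^{-1}$ with its derivative $G'=1/(f\circ F^{-1})$, invoke the strong law of large numbers for the i.i.d.\ sample, and use the uniform bounds $1/M\le R_i\le 1/m$ to upgrade almost-sure convergence to $L^1$. The organizational difference lies in how the approximation error is controlled: you use the mean value theorem to write each ratio exactly as $1/f(\eta_i)$ with $\eta_i$ in the $i$-th spacing of the $F$-sample, and you then need the maximal spacing to vanish almost surely (your Borel--Cantelli step, which is standard), whereas the paper bounds $\sup_x\left|\frac{G(x+h)-G(x)}{h}-G'(x)\right|$ by a modulus $R(h)\to 0$ and only controls, in $L^1$, the \emph{fraction} of spacings exceeding $\delta$ --- a slightly weaker input that avoids any a.s.\ maximal-spacing estimate; your evaluation at the points $\eta_i$ in the $x$-scale versus the paper's evaluation at $U_{(i-1)}$ in the $u$-scale is the same computation after the substitution $u=F(x)$. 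One point worth flagging: the limit you obtain, $\int_0^1 f\log f\,dx$ (equivalently $\int_0^1 \log f(F^{-1}(u))\,du$), is the correct one; the value $-\int_0^1 f\log f$ printed in the lemma (and in Theorem \ref{thm:second.bound}) carries a sign slip, since the paper's own penultimate expression $\int_0^1\log f(G(u))\,du$ equals $+\int_0^1 f\log f$ after substitution, which is the nonnegative relative entropy of $F$ with respect to Lebesgue measure --- the only quantity that can serve as an upper bound for the nonnegative transport cost. So the discrepancy with the stated limit is an error in the paper's statement, not a gap in your argument.
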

\begin{proof}
	Let $G=F^{-1}$. Since $f$ is strictly positive and continuous on $[0,1]$, it is uniformly continuous on $[0,1]$ and bounded above by, say, $M>0$, and bounded below by, say, $m >0$. Hence $G$ is continuously differentiable and $G'(u)=1/f\left( G(u)\right) \in \left[1/M, 1/m\right]$. 
	
	Consider the function
	\[
	R(h) := \max_{x \in [0, 1 - h]} \left| \frac{G(x + h) - G(x)}{h} - G'(x) \right|, \quad 0 < h < 1.
	\]
	We have the straightforward estimate
	\[
	R(h) \leq \max_{x \in [0, 1 - h]} \frac{1}{h} \int_0^h \left| G'(x + t) - G'(x) \right| dt .
	\]
	Since $G'$ is uniformly continuous on $[0, 1]$, for $\epsilon > 0$ there exists $\delta > 0$ such that $|G'(x + t) - G'(x)| < \epsilon$ whenever $t \leq \delta$. It follows that
	\[
	\lim_{h \downarrow 0} R(h) = 0.
	\]
	
	Let $\epsilon > 0$ be given. Let $\delta > 0$ be such that $R(h) < \epsilon$ whenever $h < \delta$. Then
	\begin{equation*}
	\begin{split}
	& \left| \frac{1}{n} \sum_{i = 1}^n \frac{G(U_{(i)}) - G(U_{(i - 1)})}{U_{(i)} - U_{(i-1)}} - \frac{1}{n} \sum_{i = 1}^n G'(U_{(i - 1)})\right| \\
	&\leq \epsilon + \frac{1}{n} \sum_{i : U_{(i)} - U_{(i - 1)} > \delta} G'(U_{(i - 1)}) \\
	&\leq \epsilon + \left(\frac{1}{m}\right) \cdot \frac{\#\{i : U_{(i)} - U_{(i - 1)} > \delta\}}{n}.
	\end{split}
	\end{equation*}
	Since the $U_{(i)}$'s are the order statistics of the uniform distribution, it is not difficult to show that $\frac{\#\{i : U_{(i)} - U_{(i - 1)} > \delta\}}{n} \rightarrow 0$ in $L^1$. Thus $\frac{1}{n} \sum_{i = 1}^n \frac{G(U_{(i)}) - G(U_{(i - 1)})}{U_{(i)} - U_{(i-1)}}$ and $\frac{1}{n} \sum_{i = 1}^n G'(U_{(i - 1)})$ have the same $L^1$ limit (if exists).
	
	On the other hand, since $\frac{1}{M} \leq G' \leq \frac{1}{m}$, we have
	\[
	\frac{1}{n} \sum_i G'(U_{(i)}) \rightarrow \int_0^1 G'(u) du = F^{-1}(1) - F^{-1}(0) = 1
	\]
	almost surely. By continuity of $\log$, we also have
	\[
	\log\left( \frac{1}{n} \sum_{i=1}^n \frac{G(U_{(i)}) - G(U_{(i-1)})}{ U_{(i)} - U_{(i-1)}} \right) \rightarrow 0 \quad a.s.
	\]
	Since $G'$ is bounded between $ \frac{1}{M}$ and $\frac{1}{m}$, by the mean value theorem the convergence holds in $L^1$ as well.
	
	Since $G'$ is bounded above and is bounded below from $0$, a similar argument shows that $-\frac{1}{n}  \sum_{i = 1}^n \log \left( \frac{G(U_{(i)}) - G(U_{(i - 1)})}{U_{(i)} - U_{(i-1)}}\right)$ and $-\frac{1}{n} \sum_{i = 1}^n \log G'(U_{(i - 1)})$ have the same $L^1$-limit given by
	\[
	- \int_0^1 \log G'(u) du = \int_0^1 \log f(G(u)) du = -\int_0^1 f(x) \log f(x) dx,
	\]
	which is the entropy of $f$.

\end{proof}

\section*{Appendix}\label{sec:appendix} 

\begin{proof}[Proof of Lemma \ref{prop:class.L}]
	Let $\theta_i = - \log p_i$ and $\phi_i = - \log q_i$ for $1 \leq i \leq n$. Then the cost function \eqref{eqn:cost.function} takes the form
	\begin{equation} \label{eqn:cost.exponential}
	c(p, q) = \log \left( \frac{1}{n}\sum_{i = 1}^n  e^{\theta_i - \phi_i}\right) - \frac{1}{n}  \sum_{i = 1}^n (\theta_i - \phi_i).
	\end{equation}
	By the Cauchy-Schwarz inequality, we have 
	\begin{equation*}
	\begin{split}
	c(p, q) &\leq \frac{1}{2} \left[ \log \left( \frac{1}{n}\sum_{i = 1}^n  e^{2\theta_i}\right) - \frac{1}{n} \sum_{i = 1}^n (2 \theta_i) \right] \\ 
	&\quad + \frac{1}{2} \left[ \log \left( \frac{1}{n}\sum_{i = 1}^n e^{-2\phi_i}\right) - \frac{1}{n} \sum_{i = 1}^n (-2 \phi_i) \right].
	\end{split}
	\end{equation*}
	Since
	\[
	\log \left( \frac{1}{n} \sum_{i = 1}^n e^{2\theta_i}\right) \leq 2 \max_{1 \leq i \leq n} |\theta_i| \leq 2 \sum_{i = 1}^n |\theta_i|,
	\]
	we have the estimate
	\[
	c(p, q) \leq \left(1 + \frac{1}{n}\right) \left( \sum_{i = 1}^n |\theta_i| + |\phi_i|\right).
	\]
	Integrating against any coupling $R \in \Pi(P, Q)$ and replacing the constant (which is irrelevant) by $2$ shows that the transport cost is finite whenever $P, Q \in \mathcal{L}$. 
\end{proof}

\begin{proof}[Proof of Theorem \ref{thm:transport.solution}]
	Since $P, Q \in \mathcal{L}$, by Proposition \ref{prop:class.L} we have $\mathbf{C}(P, Q) < \infty$. Since the cost function is continuous and bounded below, by general results of optimal transport (see for example \cite{V03, V08}), there exists an optimal coupling $R^* \in \Pi(P, Q)$ solving the transport problem, and its support is $c$-cyclical monotone.
	
	Let $m \geq 1$ and let $\{(p(s), q(s)\}_{s = 0}^{m - 1}$ be a sequence in the support of $R^*$. By the $c$-cyclical monotonicity of $R^*$, we have
	\[
	\sum_{s = 0}^{m-1} \log \left( \frac{1}{n} \sum_{i = 1}^n \frac{q_i(s)}{p_i(s)} \right) \leq \sum_{s = 0}^{m-1} \log \left( \frac{1}{n} \sum_{i = 1}^n \frac{q_i(s)}{p_i(s + 1)} \right),
	\]
	where by convention $(p(m), q(m)) := (p(0), q(0))$. For each $s$ let $\pi(s) = q(s) \odot p(s)^{-1}$ and $r(s) = p(s)^{-1}$. Rearranging, we have
	\begin{equation*} 
	\begin{split}
	& \sum_{s = 0}^{m-1} \log \left( \sum_{i = 1}^n \frac{q_i(s)/p_i(s)}{\sum_{k = 1}^n q_k(s)/p_k(s)} \frac{p_i(s)}{p_i(s + 1)} \right) \\
	&= \sum_{s = 0}^{m-1} \log \left( \sum_{i = 1}^n \pi_i(s) \frac{r_i(s + 1)}{r_i(s)} \right) \geq 0.
	\end{split}
	\end{equation*}
	Thus the (multi-valued) portfolio map
	\[
	r \mapsto \{\pi = q \odot p^{-1} : p = r^{-1}, (p, q) \in \mathrm{supp}(R^*)\}
	\]
	induced by the optimal coupling is multiplicatively cyclical monotone in the sense of \eqref{eqn:MCM}. (In \cite[Proposition 12]{PW14} we performed this argument using another coordinate system.)
	
	By \cite[Proposition 4, Proposition 6]{PW14}, there exists an exponentially concave function $\varphi$ on $\Delta_n$ such that if $\boldsymbol{\pi}$ is the portfolio map generated by $\varphi$, $(p, q)$ is any pair in the support of $R^*$ and $\varphi$ is differentiable at $r = p^{-1}$, then
	\begin{equation} \label{eqn:transport.as.portfolio}
	\pi = q \odot p^{-1} = \boldsymbol{\pi}(r).
	\end{equation}
	Rearranging, we have $q = p \odot \boldsymbol{\pi}(p^{-1})$ which is the image of $p$ under the mapping \eqref{eqn:deterministic.transport}. Since $P \in \mathcal{L}_a$ is absolutely continuous and $\varphi$ is differentiable almost everywhere, for $P$-a.e.~values of $p$ there is a unique element $q \in \Delta_n$ such that $(p, q) \in \mathrm{supp}(R^*)$ and \eqref{eqn:transport.as.portfolio} holds. This proves both (i) and (ii).
\end{proof}

\begin{proof}[Proof of Proposition \ref{prop:interpolation}]
	First we show that $P_t \in \mathcal{L}$ for all $t$. By Remark \ref{rmk:straight.line}, for each $p$ the trace of $\{T_t(p)\}_{0 \leq t \leq 1}$ is a straight line in $\Delta_n$. It follows that for each $i$ we have
	\begin{equation*}
	\begin{split}
	|\log (T_t(p))_i | &\leq \max\{ |\log p_i|, | \log (T_1(p))_i|\} \leq |\log p_i| + | \log (T_1(p))_i|.
	\end{split}
	\end{equation*}
	Since both $P_0, P_1 \in \mathcal{L}$ by assumption, we have $P_t \in \mathcal{L}$ as well.
	
	Next we prove that $P_t$ is absolutely continuous. For vectors $a$ and $b$ we let $\frac{a}{b} = (\frac{a_i}{b_i})$ be the vector of component-wise ratios, and we use $a \cdot b$ and $\langle a, b \rangle$ interchangeably to denote the Euclidean dot product. 
	
	Let $0 < t < 1$ be given.  Let ${\bf w}_t(r) = \frac{\boldsymbol{\pi}_t(r)}{r}$ be the vector of unnormalized weight ratios. Recall that $q = T_t(p) = p \odot \boldsymbol{\pi}_t(p^{-1}) = r^{-1} \odot \boldsymbol{\pi}_t(r)$ and similarly for $q'$. Then, by \eqref{eqn:q.as.weight.ratio}, we have
	\[
	q = T_t(p) = \left( \frac{({\bf w}_t(r))_i}{\sum_{j = 1}^n ({\bf w}_t(r))_j}\right)_{1 \leq i \leq n}.
	\]
	Thus, if we can prove that the distribution $\tilde{P}_t$ of ${\bf w}_t(r)$ (where $r = p^{-1}$ and $p \sim P_0$) is absolutely continuous, then $P_t$ is absolutely continuous and we are done.
	
	To this end, consider the quantity
	\begin{equation} \label{eqn:inner.product}
	\begin{split}
	& \left\langle \frac{\boldsymbol{\pi}_t(r')}{r'} - \frac{\boldsymbol{\pi}_t(r)}{r}, r' - r \right\rangle \\
	&= (1 - t) \left\langle \frac{\overline{e}}{r'} - \frac{\overline{e}}{r}, r' - r \right\rangle + t \left\langle \frac{\boldsymbol{\pi}_1(r')}{r'} - \frac{\boldsymbol{\pi}_1(r)}{r}, r' - r \right\rangle \\
	&= (1 - t) \left( 2 - \overline{e} \cdot \frac{r}{r'} - \overline{e} \cdot \frac{r'}{r}  \right) +  t \left(2 - \boldsymbol{\pi}_1(r') \cdot \frac{r}{r'} - \boldsymbol{\pi}_1(r) \cdot \frac{r'}{r} \right) \\
	&\leq - (1 - t) \log \left( \left( \overline{e} \cdot \frac{r}{r'} \right)\left( \overline{e} \cdot \frac{r'}{r} \right) \right) - t \log \left(\left( \boldsymbol{\pi}_1(r') \cdot \frac{r}{r'} \right) \left( \boldsymbol{\pi}_1(r) \cdot \frac{r'}{r} \right)\right).
	\end{split}
	\end{equation}
	In the last line we used the estimate $\log(1 + x) \leq x$.
	
	By the multiplicative cyclical monotonicity of the portfolio maps (see \eqref{eqn:MCM}), we have
	\[
	\left( \overline{e} \cdot \frac{r}{r'} \right)\left( \overline{e} \cdot \frac{r'}{r} \right)  \geq 1, \quad \left( \boldsymbol{\pi}_1(r') \cdot \frac{r}{r'} \right) \left( \boldsymbol{\pi}_1(r) \cdot \frac{r'}{r} \right) \geq 1
	\]
	for all $r, r' \in \Delta_n$. It follows from \eqref{eqn:inner.product} and the Cauchy-Schwarz inequality that
	\begin{equation} \label{eqn:inverse.Lipschitz}
	\left\|  \frac{\boldsymbol{\pi}_t(r')}{r'} - \frac{\boldsymbol{\pi}_t(r)}{r} \right\| \geq (1 - t) \frac{\log \left( \left( \overline{e} \cdot \frac{r}{r'} \right)\left( \overline{e} \cdot \frac{r'}{r} \right) \right)}{\|r' - r\|}, \quad r \neq r'.
	\end{equation}
	By \eqref{eqn:L.divergence.cost}, the right hand side of \eqref{eqn:inverse.Lipschitz} equals
	\begin{equation} \label{eqn:weight.lower.bound}
	(1 - t) \frac{c(r, r') + c(r', r)}{\|r - r'\|},
	\end{equation}
	which is positive for $r \neq r'$. By the Taylor approximation \eqref{eqn:L.divergence.approx} $c(r, r') + c(r', r)$ is of order $\|r - r'\|^2$ when $r \approx r'$, thus \eqref{eqn:weight.lower.bound} is of order $(1 - t) \|r - r'\|$ when $r \approx r'$.
	
	From \eqref{eqn:inverse.Lipschitz} and the previous observation, the mapping $p \mapsto r = p^{-1} \mapsto {\bf w}_t(r)$ is one-to-one and its inverse is locally Lipschitz. Since $P_0$ is absolutely continuous by assumption, we have that $\tilde{P}_t$, and hence $P_t$, is absolutely continuous.
	
	To prove the second claim, let $\boldsymbol{\pi}_t$ be the portfolio map at time $t$. By Lemma \ref{lem:cost.as.entropy}, we have
	\begin{equation*}
	\begin{split}
	\mathbf{C}(P_0, P_t) &= \mathbb{E}_{p \sim P_0} \left[ H\left( \overline{e} \mid \boldsymbol{\pi}_t(p^{-1})\right) \right] \\
	&= \mathbb{E}_{p \sim P_0} \left[ H\left( \overline{e} \mid (1 - t) \overline{e} + t \boldsymbol{\pi}_1(p^{-1})\right) \right].
	\end{split}
	\end{equation*}
	By properties of the relative entropy (see for example \cite[Theorem 2.7.2]{CT06}) the quantity $H\left( \overline{e} \mid (1 - t) \overline{e} + t \boldsymbol{\pi}_1(p^{-1})\right)$ is smooth and convex in $t$, and is increasing and strictly convex whenever $\boldsymbol{\pi}_1(p^{-1}) \neq \overline{e}$. Since $P_0 \neq P_1$ by assumption, the last condition holds on a set of positive probability under $P_0$. This completes the proof of the proposition.
\end{proof}

\begin{proof}[Proof of Lemma \ref{lem:suff.conditions}]
	Recall that 
	\begin{equation} \label{eqn:L.divergence.rewrite}
	{\bf D}[q' : q] = \log (1 + \nabla \varphi(q) \cdot (q' - q) ) - (\varphi(q') - \varphi(q)).
	\end{equation}
	Since $\log (1 + x) \leq x$, we have the upper bound
	\[
	{\bf D}[q' : q] \leq \nabla \varphi(q) \cdot (q' - q) - (\varphi(q') - \varphi(q))
	\]
	which is the Bregman divergence of $\varphi$ (see \cite[Chapter 1]{A16}). Let $q \neq q'$. Applying Taylor's theorem along the line segment $[q, q']$ from $q$ to $q'$, we have
	\[
	{\bf D}[q' : q] \leq \|q' - q \|^2 (v^{\top} (-\nabla^2 \varphi(q'')) v)
	\]
	for some $q''$ on $[q, q']$ and $v = \frac{q' - q}{\| q' - q \|}$. From the hypotheses we have
	\[
	(v^{\top} (-\nabla^2 \varphi(q'')) v) \leq C_1,
	\]
	so the upper bound in \eqref{eqn:L.divergence.bound} holds with $\alpha' = C_1$.
	
	To derive a lower bound, let $\Phi = e^{\varphi}$ and express \eqref{eqn:L.divergence.rewrite} in the form
	\begin{equation*}
	\begin{split}
	{\bf D}[q' : q] &= \log \left( \frac{\Phi(q) + \nabla \Phi(q) \cdot (q' - q)}{\Phi(q')}\right) \\
	&= - \log \left( \frac{\Phi(q) + \nabla \Phi(q) \cdot (q' - q) + \|q' - q \|^2 (v^{\top} \nabla^2 \Phi(q'') v)}{\Phi(q) + \nabla \Phi(q) \cdot (q' - q) } \right) \\
	&= - \log \left( 1 + \frac{(v^{\top} \nabla^2 \Phi(q'') v)}{\Phi(q) + \nabla \Phi(q) \cdot (q' - q)} \|q' - q\|^2\right).
	\end{split}
	\end{equation*}
	Again $q''$ is some point on $[q, q']$ and $v$ is as above. Using $- \log(1 + x) \geq -x$, we have the bound
	\begin{equation} \label{eqn:lower.bound.estimate}
	{\bf D}[q' : q] \geq \frac{C_2}{\Phi(q) + \nabla \Phi(q) \cdot (q' - q)} \|q' - q\|^2.
	\end{equation}
	Since $\Phi$ is non-negative and concave on $\Delta_n$, it is bounded above by some $M > 0$.  Since $\|q' - q\| \leq 1$ for $q, q' \in \Delta_n$, we have
	\[
	\Phi(q) +  \nabla \Phi(q) \cdot (q' - q) \leq M + C_3.
	\]
	Plugging this into \eqref{eqn:lower.bound.estimate} gives the lower bound with $\alpha = \frac{C_2}{M + C_3}$.
\end{proof}

\begin{acknowledgements}
S.~P.~thanks Martin Huesmann for very useful discussions.
\end{acknowledgements}

%
%

\bibliographystyle{spmpsci}      
\bibliography{infogeo.new}   

\begin{thebibliography}{10}
\providecommand{\url}[1]{{#1}}
\providecommand{\urlprefix}{URL }
\expandafter\ifx\csname urlstyle\endcsname\relax
  \providecommand{\doi}[1]{DOI~\discretionary{}{}{}#1}\else
  \providecommand{\doi}{DOI~\discretionary{}{}{}\begingroup
  \urlstyle{rm}\Url}\fi

\bibitem{ADPZ11}
Adams, S., Dirr, N., Peletier, M.A., Zimmer, J.: From a large-deviations
  principle to the {W}asserstein gradient flow: a new micro-macro passage.
\newblock Communications in Mathematical Physics \textbf{307}(3), 791 (2011)

\bibitem{A16}
Amari, S.i.: Information {G}eometry and {I}ts {A}pplications.
\newblock Springer (2016)

\bibitem{AGS08}
Ambrosio, L., Gigli, N., Savar{\'e}, G.: Gradient flows: In Metric Spaces and
  in the Space of Probability Measures.
\newblock Springer (2008)

\bibitem{B91}
Brenier, Y.: Polar factorization and monotone rearrangement of vector-valued
  functions.
\newblock Communications on pure and applied mathematics \textbf{44}(4),
  375--417 (1991)

\bibitem{Pollard}
Chang, J.T., Pollard, D.: Conditioning as disintegration.
\newblock Statistica Neerlandica \textbf{51}(3), 287--317 (1997)

\bibitem{conforti2018}
Conforti, G.: A second order equation for {S}chr\"{o}dinger bridges with
  applications to the hot gas experiment and entropic transportation cost.
\newblock Probability Theory and Related Fields \textbf{174}(1--2), 1--47
  (2019)

\bibitem{CMS06}
Cordero-Erausquin, D., McCann, R.J., Schmuckenschl{\"a}ger, M.:
  Pr\'{e}kopa-{L}eindler type inequalities on {R}iemannian manifolds, {J}acobi
  fields, and optimal transport.
\newblock Annales de la facult{\'e} des sciences de Toulouse \textbf{15}(4),
  613--635 (2006)

\bibitem{CT06}
Cover, T.M., Thomas, J.A.: Elements of {I}nformation {T}heory.
\newblock Wiley (2006)

\bibitem{DZ07}
Ding, J., Zhou, A.: Eigenvalues of rank-one updated matrices with some
  applications.
\newblock Applied Mathematics Letters \textbf{20}(12), 1223--1226 (2007)

\bibitem{DLR13}
Duong, M.H., Laschos, V., M., R.: Wasserstein gradient flows from large
  deviations of many-particle limits.
\newblock ESAIM: Control, Optimisation and Calculus of Variations
  \textbf{19}(4), 1166--1188 (2013).
\newblock Erratum at
  \texttt{www.wias-berlin.de/people/renger/Erratum/DLR2015ErratumFinal.pdf}

\bibitem{EP06}
Egozcue, J.J., Pawlowsky-Glahn, V.: Simplicial geometry for compositional data.
\newblock Geological Society, London, Special Publications \textbf{264}(1),
  145--159 (2006)

\bibitem{EY04}
{\'E}mery, M., Yor, M.: A parallel between {B}rownian bridges and gamma
  bridges.
\newblock Publications of the Research Institute for Mathematical Sciences
  \textbf{40}(3), 669--688 (2004)

\bibitem{EKS15}
Erbar, M., Kuwada, K., Sturm, K.T.: On the equivalence of the entropic
  curvature-dimension condition and {B}ochner's inequality on metric measure
  spaces.
\newblock Inventiones Mathematicae \textbf{201}(3), 993--1071 (2015)

\bibitem{EMR15}
Erbar, M., Maas, J., Renger, D.R.M.: From large deviations to {W}asserstein
  gradient flows in multiple dimensions.
\newblock Electronic Communications in Probability \textbf{20}(89), 1--12
  (2015)

\bibitem{F07}
Feng, S.: Large deviations for {D}irichlet processes and {P}oisson-{D}irichlet
  distribution with two parameters.
\newblock Electron. J. Probab \textbf{12}, 787--807 (2007)

\bibitem{F02}
Fernholz, E.R.: Stochastic Portfolio Theory.
\newblock Applications of Mathematics. Springer (2002)

\bibitem{FG15}
Fournier, N., Guillin, A.: On the rate of convergence in {W}asserstein distance
  of the empirical measure.
\newblock Probability Theory and Related Fields \textbf{162}(3-4), 707--738
  (2015)

\bibitem{GM96}
Gangbo, W., McCann, R.J.: The geometry of optimal transportation.
\newblock Acta Mathematica \textbf{177}(2), 113--161 (1996)

\bibitem{horn1990}
Horn, R., Johnson, C.: Matrix Analysis.
\newblock Cambridge University Press (1990)

\bibitem{JKO98}
Jordan, R., Kinderlehrer, D., Otto, F.: The variational formulation of the
  {F}okker--{P}lanck equation.
\newblock SIAM Journal on Mathematical Analysis \textbf{29}(1), 1--17 (1998)

\bibitem{KZ19}
Khan, G., Zhang, J.: The {K}{\"a}hler geometry of certain optimal transport
  problems.
\newblock Pure and Applied Analysis \textbf{2}(2), 397--426 (2020)

\bibitem{L12}
L{\'e}onard, C.: From the {S}chr{\"o}dinger problem to the
  {M}onge-{K}antorovich problem.
\newblock Journal of Functional Analysis \textbf{262}(4), 1879--1920 (2012)

\bibitem{leonardsurvey}
L{\'e}onard, C.: A survey of the {S}chr\"{o}dinger problem and some of its
  connections with optimal transport.
\newblock Discrete and Continuous Dynamical Systems \textbf{34}(4), 1533--1574
  (2014)

\bibitem{LS87}
Lynch, J., Sethuraman, J.: Large deviations for processes with independent
  increments.
\newblock The Annals of Probability \textbf{15}(2), 610--627 (1987)

\bibitem{M97}
McCann, R.J.: A convexity principle for interacting gases.
\newblock Advances in Mathematics \textbf{128}(1), 153--179 (1997)

\bibitem{Mikami04}
Mikami, T.: Monge's problem with a quadratic cost by the zero-noise limit of
  $h$-path processes.
\newblock Probability Theory and Related Fields \textbf{129}(2), 245--260
  (2004)

\bibitem{O01}
Otto, F.: The geometry of dissipative evolution equations: the porous medium
  equation.
\newblock Comm. Partial Differential Equations \textbf{26}, 101--174 (2001)

\bibitem{P17}
Pal, S.: Embedding optimal transports in statistical manifolds.
\newblock Indian Journal of Pure and Applied Mathematics \textbf{48}(4),
  541--550 (2017)

\bibitem{P16}
Pal, S.: Exponentially concave functions and high dimensional stochastic
  portfolio theory.
\newblock Stochastic Processes and their Applications \textbf{129}(9),
  3116--3128 (2019)

\bibitem{Pal19}
Pal, S.: On the difference between entropic cost and the optimal transport cost
  (2019).
\newblock Arxiv preprint arxiv.org:1905.12206

\bibitem{PW14}
{Pal}, S., {Wong}, T.K.L.: The geometry of relative arbitrage.
\newblock Mathematics and Financial Economics \textbf{10}, 263--293 (2016)

\bibitem{PW16}
Pal, S., Wong, T.K.L.: Exponentially concave functions and a new information
  geometry.
\newblock The Annals of Probability \textbf{46}(2), 1070--1113 (2018)

\bibitem{vRS09}
von Renesse, M.K., Sturm, K.T.: Entropic measure and {W}asserstein diffusion.
\newblock The Annals of Probability \textbf{37}(3), 1114--1191 (2009)

\bibitem{R70}
Rockafellar, R.T.: Convex Analysis.
\newblock Princeton Landmarks in Mathematics. Princeton University Press (1997)

\bibitem{S15}
Santambrogio, F.: Optimal Transport for Applied Mathematicians.
\newblock Springer (2015)

\bibitem{V03}
Villani, C.: Topics in Optimal Transportation.
\newblock Graduate Studies in Mathematics. American Mathematical Society (2003)

\bibitem{V08}
Villani, C.: Optimal Transport: Old and New.
\newblock Springer (2008)

\bibitem{W15}
Wong, T.K.L.: Optimization of relative arbitrage.
\newblock Annals of Finance \textbf{11}(3-4), 345--382 (2015)

\bibitem{W17}
Wong, T.K.L.: Logarithmic divergences from optimal transport and {R}{\'e}nyi
  geometry.
\newblock Information Geometry \textbf{1}(1), 39--78 (2018)

\bibitem{W19}
Wong, T.K.L.: Information geometry in portfolio theory.
\newblock In: Geometric Structures of Information, pp. 105--136. Springer
  (2019)

\bibitem{WY19}
Wong, T.K.L., Yang, J.: Optimal transport and information geometry.
\newblock arXiv preprint arXiv:1906.00030  (2019)

\end{thebibliography}


\end{document}